\numberwithin{equation}{section}
\theoremstyle{plain}
\newtheorem{theorem}{Theorem}[section]
\newtheorem{lemma}[theorem]{Lemma}
\theoremstyle{definition}
\newtheorem{definition}[theorem]{Definition}
\newtheorem{example}[theorem]{Example}
\newtheorem{assumption}[theorem]{Assumption}
\newtheorem{remark}[theorem]{Remark}
\newcommand{\E}{\mathbb{E}}
\newcommand{\Pb}{\mathbb{P}}
\newcommand{\R}{\mathbb{R}}
\newcommand{\f}[2]{\frac{#1}{#2}}
\newcommand{\pa}{\partial}
\newcommand{\half}{\f{1}{2}}
\newcommand{\dd}{\mathrm{d}} % differential d
\newcommand{\abs}[1]{\left\lvert#1\right\rvert} % absolute value
\newcommand{\indic}[1]{\mathds{1}_{#1}} % indicator function
\newcommand{\ip}[2]{\left\langle #1\,,#2\right\rangle} % inner product (< , >)
\newcommand{\floor}[1]{\left\lfloor#1\right\rfloor} % floor command
\DeclareMathOperator*{\argmin}{arg\, min}
\title[SOC under discrete time partial observations]{Continuous time Stochastic optimal control under discrete time partial observations}
\author[Ch.Bayer]{Christian Bayer$^1$}
\address{$^1$Weierstrass Institute for Applied Analysis and Stochastics (WIAS), Berlin, Germany.}
\author[B.Djehiche]{Boualem Djehiche$^2$}
\address{$^2$KTH, School of Engineering Sciences (SCI), Mathematics (Dept.), Mathematical Statistics.}
\author[E.Rezvanova]{Eliza Rezvanova$^3$}
\address{$^3$King Abdullah University of Science and Technology (KAUST), Computer, Electrical and Mathematical Sciences \& Engineering Division (CEMSE), Thuwal, Saudi Arabia.}
\author[R.Tempone]{Ra\'{u}l F.~Tempone$^3,^4,^5$}
\address{$^4$Chair of Mathematics for Uncertainty Quantification, RWTH Aachen University, Aachen, Germany.}
\address{$^5$Alexander von Humboldt Professor in Mathematics for Uncertainty Quantification, RWTH Aachen University, Aachen  Germany.}
\begin{document}

\begin{abstract}
This work addresses stochastic optimal control problems where the unknown state evolves in continuous time while partial, noisy, and possibly controllable measurements are only available in discrete time. We develop a framework for controlling such systems, focusing on the measure-valued process of the system's state and the control actions that depend on noisy and incomplete data. Our approach uses a stochastic optimal control framework with a probability measure-valued state, which accommodates noisy measurements and integrates them into control decisions through a Bayesian update mechanism. We characterize the control optimality in terms of a sequence of interlaced Hamilton Jacobi Bellman (HJB) equations coupled with controlled impulse steps at the measurement times. For the case of Gaussian-controlled processes, we derive an equivalent HJB equation whose state variable is finite-dimensional, namely  the state's mean and covariance. We demonstrate the effectiveness of our methods through numerical examples. These include control under perfect observations, control under no observations, and control under noisy observations. Our numerical results highlight significant differences in the control strategies and their performance, emphasizing the challenges and computational demands of dealing with uncertainty in state observation.

\end{abstract}

\keywords{Markov Decision Process, Stochastic optimal control,
Filtering, Partially Observed state,
Bayesian updates,
Measure--valued process}

\subjclass[2020]{60H10, 60H35, 93E20, 49L20, 93E11, 60G35}
%Suggested AMS classifications:\\
%60H10 - Stochastic ordinary differential equations (See also 34F05) \\
%60H35 - Computational methods for stochastic equations\\
%93E20 - Optimal stochastic control\\
%49L20 - Dynamic programming method, Hamilton-Jacobi-Bellman equations\\
%93E11 - Filtering in stochastic control theory\\
%60G35 - Signal detection and filtering (probabilistic properties) \\
%
%We could add also some 65 from numerical analysis
\maketitle

\section{Introduction}
\label{sec:introduction}

%\input{introduction}
% !TEX root = bayesOptimalControl.tex
This work studies \emph{Stochastic Optimal Control} (SOC) problems where the state evolves in continuous time but observations are available only in discrete time. The focus is on probability measure-valued processes and control actions dependent on noisy and incomplete data.
SOC is a vital area of study in both theoretical and applied mathematics, finding applications in finance, engineering, and various sciences. This field deals with the challenge of making optimal decisions in systems whose dynamics are driven by random processes.

In optimal control, we aim to determine a control strategy for a dynamical system that minimizes a given cumulative running cost function over time, plus a terminal cost, within a finite time horizon.
As general references, we refer to books \cite{FlemingSoner1993} (providing a comprehensive introduction to the theory of SOC, focusing on Hamiltonian systems and Hamilton-Jacobi-Bellman (HJB) equations) and \cite{BardiCapuzzoDolcetta1997} (exploring the connection between Optimal Control and the viscosity solutions of HJB equations).

In many applications which motivate our work,  the system's state evolves continuously over time, but the current state of the controlled system is not perfectly known. 
This discrepancy between the continuous evolution of the state and 
imperfect observations presents a significant challenge for control strategies.
The books \cite{Bensoussan1992,Mao2006,Krishnamurthy16} address control problems where the system state is only partially observable, using a filtering framework and applying it to problems of control under uncertainty.

In this work, we concentrate on the important special case that
\begin{itemize}
\item the state process evolves in continuous time on a finite time interval, but is observed in discrete time;
\item the control on the state process acts in continuous time, incurring running as well as terminal costs to be minimized;
\item the (noisy, partial) observations incur costs, as well, which are taken into account in the minimization problem.
\end{itemize}

As discussed below, the literature on SOC under partial observation mainly concentrates on cases where both control and observations happen in continuous or discrete time.
Moreover, the observation process is usually not considered to incur costs.
The mixed case studied in this work is highly relevant in many applications where the ``observation'' corresponds to an actual, physical measurement, especially one requiring manual intervention.

As a practical application of such stochastic control problems, consider the treatment of a disease guided by regular diagnostic tests. The disease evolves continuously, and the medical doctor must decide when and how to treat the patient and when and which kinds of medical tests to prescribe. Each step incurs costs that should be accounted for when optimizing the patient's health. Other applications include automated trading systems in finance, where the market state evolves continuously, but traders only have discrete and noisy observations of market indicators. The goal is to optimize trading strategies to maximize profit or minimize risk. In autonomous vehicle navigation within robotics, the vehicle's position and environment evolve continuously, but sensors provide discrete, incomplete, and noisy measurements. The control strategy aims to navigate the vehicle safely and efficiently. In epidemiology, disease spreads continuously, but health officials only obtain periodic and potentially noisy data through tests of a relatively small number of individuals. The objective is to control the spread by optimizing costly interventions like vaccination or quarantine. Lastly, in industrial process control, the state of a process evolves continuously, but observations from sensors are discrete and noisy. The goal is to control the process to ensure product quality while minimizing control and data acquisition costs.

\subsection*{Literature review}
\label{sec:literature-review}

We start by describing the main frameworks for analyzing classical SOC problems.
% MDP
In the 1960s, Ronald A. Howard \cite{Howard1960} popularized the term ``Markov Decision Processes'' (MDPs) and developed the policy iteration method, which is a fundamental technique for solving MDPs.
Throughout the 1970s and 1980s, MDPs addressed more complex settings, including continuous-time processes and infinite-horizon problems, leading to the development of methods like value iteration and Q-learning. The integration of MDPs with machine learning, particularly reinforcement learning (RL), in the late 1980s and 1990s, see \cite{BertsekasTsitsiklis1996,Szepesvari2010,sutton_barto18}, among others, has led to significant advancements. This includes the development of algorithms like Temporal Difference (TD) learning and the popularization of Q-learning and deep reinforcement learning.
Today, MDPs are central to many applications in artificial intelligence, robotics, economics, and operations research.
% Filtering problem and algorithms
The Kalman filter \cite{Kalman1960} became an essential tool for dealing with linear systems with Gaussian noise, while the need for handling non-linear systems led to the development of the Extended Kalman Filter \cite{WelchBishop1995}, Ensemble Kalman Filter \cite{Evensen1994} and, later, the Particle Filter in the 1990s \cite{DoucetFreitasGordon2001}.
The concept of separation principle in control theory, which suggests that control and filtering can be separated in some cases, was a significant result for linear systems but proved challenging in non-linear settings, see \cite{AndersonMoore1979}. This book develops the separation principle in the context of linear systems and discusses its extension to non-linear systems through various filtering techniques.

% HJB
The main alternative approach is based on the Hamilton-Jacobi-Bellman (HJB) equation. It describes the optimal cost function's evolution in dynamic programming terms. This approach was extensively developed during the 1970s and 1980s.
The application of viscosity solutions to HJB equations, cf. \cite{Lions1982,CrandallLions1983,crandall@1984} by Michael Crandall and Pierre-Louis Lions in the 1980s provided critical mathematical tools for dealing with the challenges posed by the non-linearity and high dimensionality of realistic control problems.
Modern research focuses on bridging the gap between theoretical optimality and practical computability, especially in high-dimensional spaces where traditional methods are computationally infeasible.
Applications now span complex systems in finance, engineering, and networked systems, where uncertainty and partial observability are key concerns.

The research in MDPs and stochastic control of \emph{partially observed} systems continues to be a vibrant field, driven by both theoretical interests and practical applications.

To fix notations, let us assume that we are controlling a system $X_t$ in continuous time $t \in [0,T]$, which is given as the solution of a stochastic differential equation (SDE), symbolically
\begin{equation}
    \label{eq:control_SDE}
    \dd X_t = b(X_t; \alpha) \dd t + \sigma(X_t; \alpha) \dd W_t,
\end{equation}
driven by $m$-dimensional Brownian motion $W$, taking values in $\R^d$.
Here, $\alpha$ denotes the control, taking values in a suitable set -- and being progressively measurable w.r.t.~a suitable filtration. 
Suppose now that rather than $X_t$, we observe a process $Y_t$ satisfying
\[
    \dd Y_t = h(X_t) \dd t + \zeta \dd B_t,
\]
driven by another Brownian motion $B$.
(In this context, $X$ is often referred to as the \emph{signal} process and $Y$ as the \emph{observation} process.)
Given the observations $(Y_s)_{s \in [0,t]}$ up to time $t$, we can first compute the \emph{conditional distribution} $\mu_t$ of the state process $X_t$ at time $t$, i.e., we solve the \emph{filtering problem}.
Note that $\mu_t$ is a measure-valued stochastic process, which is adapted to the filtration generated by the observation process $(Y_t)$.
The control problem can now be re-expressed as a control problem for the conditional distribution, i.e., a measure-valued stochastic optimal control problem.
However, the analysis and numerics for such stochastic optimal control problems is much less understood compared to the standard, finite-dimensional situation.

An important technical tool required for deriving dynamic programming principles or Hamilton-Jacobi-Bellman equations for measure-valued stochastic processes is an appropriate It\^{o} formula.
There has been renewed interest in this problem, mainly coming from mean-field games and control of McKean--Vlasov equations.
A very general It\^{o} formula for measure-valued semi-martingales has recently been derived in \cite{guo_pham_wei23} and references therein.
More related works in the context of mean-field optimal control or optimal stopping, we refer to \cite{talbi_touzi_zhang23a,talbi_touzi_zhang23b,germain_pham_warin22}.

The SOC problem with partial state observation is better understood when controls are \emph{relaxed} i.e., $\alpha_t$ is replaced by a measure, see, for instance, \cite{fleming_nisio84,elkaroui_du_jeanblanc88} for existence results for relaxed optimal controls.
%\textcolor{blue}{
The classical work by Fleming \cite{Fleming1980} introduced measure-valued processes for partially observed control problems, providing a theoretical foundation for analyzing stochastic control problems where the state is not fully observable. Fleming's insights are essential for our analysis of measure-valued processes.
%}
% Additional citations after discussing with Boualem July 2024
%\textcolor{blue}{
Recent works have made significant strides in approximating SOC problems under partial observation. Tan and Yang \cite{TanYang2023} discuss discrete-time approximation of continuous-time stochastic control problems under continuous time, partial observation. Their methods' convergence properties support our work's theoretical foundation, especially in the context of the measure-valued control framework we discuss. This recent work deals with approximating schemes for filtered problems, which may be relevant to our problem, which is not a standard filtering-control problem and only uses discrete time, partial observations.
%}

% end additional citations July 2024 

The situation changes quite drastically when we consider SOC problems formulated within the field of MDPs.
Indeed, there is a classical and rich literature on so-called \emph{partially observed Markov decision processes} (POMDPs), which, as the name suggests, is concerned with MDPs where only partial, noisy observation of the controlled state process are available.
We refer to \cite{Krishnamurthy16} for a monograph and \cite{baeuerle_rieder17} for an interesting recent application to an economics problem.
As discussed earlier, the strategy is to lift the POMDP by considering the conditional distribution of the unobserved full state.
The resulting control problem for measure-valued (hence, in general, infinite-dimensional) processes is the seen to be a standard MDP, and can be analysed by standard methods.
Note that, in contrast to the stochastic optimal control literature, the MDP literature is almost exclusively concerned with discrete time problems, and this also extends to the partially-observed case.

The problem of SOC under noisy observation is also related to the problem of reinforcement learning \cite{sutton_barto18}.
Indeed, reinforcement learning operates under even less information, since not even knowledge of the driving dynamics of the controlled system is given, but rather has to be learned while controlling the system.
Of course, stationarity of the system is usually assumed.
Reinforcement learning is, again, generically formulated in discrete time, even though some attempts of continuous time extensions have recently been made, see, e.g., \cite{wang_etal_20}.

\subsection*{Our contribution}
\label{sec:our-contribution}

In our work, we consider a different problem, arising from the need to develop more efficient and reliable control strategies for systems with discrete, partial and noisy observations.
%motivated by many practical use cases.
Consider a continuous time SOC \eqref{eq:control_SDE} with \emph{partial} and \emph{noisy} observations $Y_{t_i}$ available at (fixed) discrete times $0 < t_1 < t_2 < \cdots t_n < T$.
As before, decisions have to be based on the conditional distribution $\mu_t$ of the (unobserved) state process $X_t$ given all the observations already available to us, i.e., $\set{Y_{t_i} : t_i \le t}$.
The dynamics of $\mu_t$ can now be described as follows:
\begin{enumerate}
\item Between observation times, $\mu_t$ follows the Fokker-Planck equation associated to the process \eqref{eq:control_SDE} -- a \emph{deterministic} dynamics.
\item At an observation time $t = t_i$, we \emph{update} the conditional distribution with the (random) new information $Y_{t_i}$, leading to a random jump $\mu_{t_i} = K_{\epsilon}(\mu_{t_i^-}, Y_{t_i}) \mu_{t_i^-}$, where $K_{\epsilon}$ denotes the Radon--Nikodym derivative of the updated distribution w.r.t.~the distribution prior to the update, and $\epsilon$ indicates the level of the noise in our observation.
\end{enumerate}
We refer to \eqref{eq:dynamics-controlled} for the precise dynamics in the controlled case.
Note that in a statistical sense, the second step can be interpreted as a \emph{Bayesian update} of the \emph{prior distribution} $\mu_{t_i^-}$ to a \emph{posterior distribution} $\mu_{t_i}$ at time $t_i$ taking into account our data $Y_{t_i}$.

As in the case of continuous time observation, we replace the control problem in the unobserved state $X$ by the corresponding control problem in its conditional distribution $\mu_t$.
The dynamics of $\mu_t$ is, however, quite different from the continuous-time case, at least on a formal level.
Rather than solving a Zakai SPDE, we need to solve a deterministic PDE with finitely many stochastic jumps. Of course, this problem is still infinite-dimensional.

The above setup invites to include a second control into our problem.
As already indicated above, measurements are usually noisy (say, with standard deviation $\epsilon$), and may not convey information about the full state $X_t$, anyway, -- think of noisy observations of a function of $X_t$, e.g., just a single component.
However, in many cases, \emph{more precise measurement} methods may be available, albeit at a higher \emph{cost}.
Hence, in addition to the control $\alpha$ guiding the state process $X_t$, we may consider a second control $\beta$ for the measurement method, as well as an associated cost term.
In the statistical literature, this is also known as \emph{optimal experimental design}, see, for instance, \cite{smucker2018optimal}.
Mathematically, this means that we obtain Bayesian updates $\mu_{t_i} = K_{\beta_{t_i}}(\mu_{t_i^-}, Y_{t_i}) \mu_{t_i^-}$ which are directly influenced by the control, not only indirectly via $\mu$ and $Y$.
Of course, this principle may also be applied to the observation times themselves, which could, more generally, be chosen by the controller.

From a computational perspective, we are still faced with an inherently infinite-dimensional SOC, since the conditional distribution $\mu_t$ takes values in the set of probability measures on the underlying state space.
We propose to solve the stochastic optimal control problem numerically under the assumption that $\mu_t$ can be accurately characterized by the expectations $\int \varphi_j \dd \mu_t$ of finitely many test functions $\varphi_j$, $j \in \mathcal{J}$ -- either exactly, or in an approximate sense.
In this case, the HJB equation for our SOC can be reduced to an HJB equation in $\abs{\mathcal{J}}$ space variables and one time variable.

As an example, consider a drift-controlled Ornstein--Uhlenbeck process, in which case all conditional distributions $\mu_t$ are Gaussian, and, hence, can be characterized by their means $m_t$ and co-variances $\sigma_t^2$.
More generally, assume that the conditional distribution $\mu_t$ can be approximated by Gaussians $\mathcal{N}(m_t, \sigma_t^2)$.
We can still solve the corresponding HJB equation, projecting both the dynamics of $\mu_t$ between observation times as well as the Bayesian updates to the set of Gaussian distributions as we go.
This method can be interpreted as a generalization  of the Kalman filter.

\subsection*{Outline of this work}
\label{sec:outline}

We start with a motivating example of a SOC problem in continuous time with discrete time noisy observations in Section~\ref{sec:guiding-example}. In the following Section~\ref{sec:general-theory} we provide a formal setup for the problem, prove that the dynamic programming principle holds and derive the HJB equations under suitable regularity conditions. We then discuss the specific example of a drift-controlled one-dimensional Ornstein--Uhlenbeck process under observations with additive, independent Gaussian noise and quadratic costs, see Section~\ref{sec:optimal-control-an}. We observe that the problem cannot be reduced to solving a system of Riccati ODEs as usual, but we derive the associated HJB equation in time, mean and variance of the underlying Gaussian distribution $\mu_t$.
We then propose an appropriate finite-difference solver for the HJB equation, analyze its behavior and provide numerical examples in Section~\ref{sec:num-approach}. Finally, in Section~\ref{sec:kalman-filters-with} we generalize the HJB equation to the multi-dimensional (approximately) Gaussian case, and make the link to the Kalman filter.

%%% Local Variables:
%%% mode: latex
%%% TeX-master: "bayesOptimalControl"
%%% End:

\section{A  motivating example}
\label{sec:guiding-example}

% !TEX root = bayesOptimalControl.tex
Consider an example based on a controlled Ornstein-Uhlenbeck process, with variance controlled Gaussian noisy measurements at times \( t_i \), \( i=1, \ldots, n \). More precisely, for \( t \ge 0 \), we consider the following controlled SDE representing the unobserved dynamics of the real-valued process \( X \):
\begin{equation}
  \label{eq:OU-SDE}
  dX_t = (-\theta X_t + \alpha_t) \, dt + b \, dW_t,
\end{equation}
where \( \alpha_t \in \mathbb{R} \) is the control and \( X_0 \sim \mu_0 \) is the initial condition.

Let \( \mu^\alpha_t \) denote the random evolution of the conditional law of the unobserved controlled process \( X^\alpha \) under the control process \( \alpha_t \). The natural conditioning event corresponds to all observations made up to time \( t \), i.e., \( t_i \le t \).

Our goal is to minimize the expected cost given by  a running cost, a final cost, and a cost associated to each of the measurements, namely
\begin{equation}
  \label{eq:quadratic-cost0}
  J(\alpha) \coloneqq \mathbb{E} \left[ \int_0^T \left( \int_{\mathbb{R}} x^2 \mu^\alpha_t(\mathrm{d}x) + C \alpha_t^2 \right) dt + \int_{\mathbb{R}} x^2 \mu^\alpha_T(\mathrm{d}x) +  \sum_{i=1}^n \frac{1}{\beta_{i} }\right].
\end{equation}
The last sum term in the above corresponds to the cost of the measurements. More precisely, at each time \( t_i \), we observe a noisy version of $X_{t_i}$,
\[
Y_i \coloneqq X_{t_i} + \beta_i Z_i,
\]
where \( Z_i \sim \mathcal{N}(0, 1) \) are independent of each other and all other sources of randomness, and \( X_{t_i} \sim \mu^\alpha_{t_i^-} \). We assume that the noise level \( \beta_i > 0 \) of the measurement at time $t_i$ is also a control parameter. 
Therefore, at each time \( t_i \), we have to update the conditional distribution \( \mu^\alpha \) according to the Bayesian update:
\begin{equation}
  \label{eq:bayes-update}
  \mu^\alpha_{t_i}(\mathrm{d}y) \propto K_{\beta_i}(y; \mu^\alpha_{t_i^-}, Y_i) \mu^\alpha_{t_i^-}(\mathrm{d}y),
\end{equation}

where \( K_{\beta}(y; \mu^\alpha_{t_i^-}, Y_i) \) denotes the Gaussian likelihood corresponding to the measurement \( Y_i \). It is intuitive to see that smaller values of $\beta_i$ will provide better measurements and thus reduce the values of the first two terms in the objective functional \eqref{eq:quadratic-cost0}. However, these smaller $\beta_i$ values will make the last term in \eqref{eq:quadratic-cost0} larger, indicating that there is a non-trivial tradeoff to address when solving this SOC problem. 

Motivated by this problem, we will state in the next section our SOC problem entirely in terms of the evolution of the conditional distribution, which is an interlaced sequence of controlled time evolutions, governed by the control $\alpha_t$ during intervals $(t_i,t_{i+1})$, with discontinuous jumps at times $t_i$ given by the Bayesian updates, which are controlled by the $\beta_i$. This evolution of $\mu_t$ will be then Markovian and will naturally take us into a sequence of interlaced HJB equations, connected by conditional expectations matching conditions. See Section \ref{sec:general-theory} for more details.

\section{General theory}
\label{sec:general-theory}

% !TEX root = bayesOptimalControl.tex
For a fixed time horizon $T>0$, consider a large enough filtered probability space $\left(\Omega, \mathcal{F}, \mathbb{F}, \Pb \right)$.
We also denote by $\mathcal{P}(\R^d)$ the set of probability measures on $\left( \R^d, \mathcal{B}(\R^d) \right)$, 
%\textcolor{blue}{
 furnished with the topology of weak convergence. %} 
We construct a stochastic optimal control problem for a probability-measure-valued process $\mu_t = \mu_t(\dd x; \omega)$ generalizing the example discussed in Section~\ref{sec:guiding-example}.
In particular, we consider a continuous-time version of the problem, where the underlying, unobserved process is a diffusion process.

\subsection{Formulation of the problem}
\label{sec:formulation-problem}

To fix notation, let us first look at the dynamics of the uncontrolled process.
To start out, we fix observation times $0 <  t_1 < \cdots < t_n < T$, which are (for simplicity) assumed to be deterministic.
Likewise, for simplicity we assume that there are no observations at times $0$ and $T$, and we introduce $t_0 \coloneqq 0$, $t_{n+1} \coloneqq T$.
We also introduce the notation $\floor{t} \coloneqq \max\Set{i | t_i < t}$, with the convention that $\floor{0} \coloneqq 0$.
Let $\mathcal{G}$ denote the infinitesimal generator of a $d$-dimensional diffusion process %\textcolor{blue}{
defined for $f\in C_b^2(\R^d)$ %} 
by
\begin{equation}
  \label{eq:generator-G-uncontrolled}
  \mathcal{G}f(y) \coloneqq \sum_{i=1}^d b_i(y) \partial_i f(y) + \half \sum_{i,j=1}^d a_{ij}(y) \partial_{ij}^2 f(y).
\end{equation}
Between observation times, the process $\mu_t$ satisfies the Fokker--Planck equation associated with $\mathcal{G}$, i.e., using the adjoint operator we write
\begin{subequations}
  \label{eq:dynamics-uncontrolled}
\begin{equation}
  \label{eq:FP-uncontrolled}
  \dd \mu_t = \mathcal{G}^\ast \mu_t \dd t, \quad t_i \le t < t_{i+1},\,\, i=1,\ldots, n.
\end{equation}
We note that $\mathcal{G}^\ast \mu_t$ can be seen as a differential operator acting on the density of $\mu_t$ -- tacitly assuming the existence of such a density, as well as identifying the measure with its density. More generally, we can understand $\mathcal{G}^\ast$ as an operator acting on (signed) measures defined in a weak sense. 

At each observation time $t_i$, we obtain a noisy observation $Y_i$, and update the conditional measure $\mu$ according to the \emph{Bayes rule}
\begin{equation}
  \label{eq:observation-update-uncontrolled}
  \mu_{t_i}(\dd x) = K_\varepsilon(x; \mu_{t_i^-}, Y_i) \mu_{t_i^-}(\dd x),\,\, i=1,\ldots, n.
\end{equation}
\end{subequations}
where the kernel $K_\varepsilon$ depends on the noise level $\varepsilon\ge0$ as well as the nature of the measurement procedure.
The following example illustrates a typical choice of measurement procedure and the corresponding choice of the kernel.
\begin{example}[A sequence of measurements]
  \label{ex:gaussian-noisy-observation}
Let the measurement at $t_i$ is given by $Y_i = \widehat{X}_i + \varepsilon Z_i$ for $\widehat{X}_i \sim \mu_{t_i^-}$ and $Z_i \sim \mathcal{N}(0,1)$ independent random variables, where we assume, for simplicity, that $d=1$.
  More specifically, let $(U_i)_{i=1}^n$, $(Z_i)_{i=1}^n$ denote two i.i.d.~sequences of standard uniform and normal r.v.s, respectively. Then  we define
  \begin{equation*}
    \widehat{X}_i \coloneqq F[\mu_{t_i^-}]^{-1}(U_i), \quad Y_i \coloneqq \widehat{X}_i + \varepsilon Z_i,
  \end{equation*}
  where $F$ maps probability measures to their c.d.f.s.
  With $\rho_\varepsilon$ denoting the density of $\mathcal{N}(0, \varepsilon^2)$, the posterior distribution is given by~\eqref{eq:observation-update-uncontrolled} with
  \begin{equation*}
    K_\varepsilon(x; \mu, y) \coloneqq \f{L_\varepsilon(x;y)}{\int_{\R} L_\varepsilon(x;y) \mu(\dd x)}, \quad L_{\varepsilon}(x;y) \coloneqq \rho_\varepsilon(y-x).
  \end{equation*}
\end{example}

As anticipated in Section~\ref{sec:guiding-example}, %{sec:health-sick-mark-contr-under}, 
we ensure %want to make sure
 that the dynamics for $\mu_t$ satisfies the Markov property.
For this reason, we impose
\begin{assumption}[Likelihood structure]
  \label{ass:Markovian-update}
  The observation $Y_i$ at time $t_i$ is a deterministic function of the observation time $t_i$, the probability measure $\mu_{t_i^-}$, the noise level $\varepsilon$, and a random variable $\Gamma_i$ belonging to an sequence of independent r.v.s $(\Gamma_i)_{i=1}^n$ which are independent of all other sources of randomness.
\end{assumption}
See the above Example~\ref{ex:gaussian-noisy-observation} for an illustrative example of an update rule satisfying Assumption~\ref{ass:Markovian-update} -- with $\Gamma_i = (U_i,Z_i)$.
To reflect Assumption~\ref{ass:Markovian-update} in the notation, we may also write $Y_i = Y^{\varepsilon,\mu_{t_i^-}}_i = Y^\varepsilon_i$, depending on the context.

Note that the $\mathcal{P}(\R^d)$-valued Markov process $\mu_t$ defined above has the property that its dynamics between measurement times is deterministic. The sole stochastic effects enter through the measurement values at the  measurement times.
At the observation points, it jumps in a random manner according to the Bayes update \eqref{eq:observation-update-uncontrolled}.

As such, it is easy to see that $\mu_t$ satisfies an "It\^{o}-formula''. Given $\Phi: \mathcal{P}(\R^d) \to \R$ ``nice enough'', we associate to it its \emph{flat derivative}, see, for instance, \cite[Page 18f]{dawson1993measure} or \cite[Appendix F]{kolokoltsov2010nonlinear}.

\begin{definition}[Flat derivative]\label{def:flatD}
  A function $\Phi:\,\, \mathcal{P}(\R^d)\longrightarrow \R$ is said to be differentiable at $\mu$ with derivative $\frac{\delta\Phi}{\delta \mu}: \mathcal{P}(\R^d) \times \R^d \to \R$ if for any $\nu\in \mathcal{P}(\R^d)$,
  \begin{equation}\label{gat}
    \Phi(\mu)-\Phi(\nu)=\int_0^1\langle \mu-\nu,\frac{\delta\Phi}{\delta \mu}(\nu+\lambda(\mu-\nu),\cdot)\rangle \, \dd \lambda.
  \end{equation} 
  Moreover, assuming that $\Phi$ is actually even defined in a neighborhood of $\mu$ in the space of signed measures, we can equivalently require the limit 
  \begin{equation}
    \frac{\delta\Phi}{\delta\mu}(\mu,x):=\underset{\theta\downarrow 0}{\lim} \frac{\Phi(\mu+\theta\delta_x)-\Phi(\mu)}{\theta}=\left.\frac{\dd}{\dd\theta}\right|_{\theta=0}\Phi(\mu+\theta \delta_x)
  \end{equation}
  to exist for every $x\in\R^d$.

  The function $\Phi$ is continuously differentiable if furthermore the function $\R^d\ni x\mapsto\frac{\delta\Phi}{\delta\mu}(\mu,x)$ is continuous in $\R^d$.
\end{definition}

Observe, that according to Definition \ref{def:flatD}, if we have $\Phi(\mu) = F(\ip{\mu}{f})$ for a differentiable function $F:\R\to\R$ and a bounded function $f:\R^d \to \R$, then the flat derivative $\f{\delta \Phi}{\delta \mu}$ always exists and satisfies
\begin{equation}
  \label{eq:example-gateaux-derivative}
  \f{\delta \Phi}{\delta \mu}(\mu,x) = F^\prime(\ip{\mu}{f}) f(x).
\end{equation}

% For example, let $F:\,\, \R\rightarrow \R$ be a  differentiable function. 
% \begin{equation}\label{scalar}
% \text{If}\quad \Phi(\mu)=F(\langle \mu,f\rangle)\quad \text{then} \quad \frac{\delta\Phi}{\delta \mu}(\mu,x)=F^{\prime}(\langle \mu,f\rangle)f(x).
% \end{equation}

In this context, %\textcolor{blue}{
in the next lemma we state  a chain rule for the following class of function.
\begin{definition}[Class $\mathcal{S}^{1,1}(\mathcal{P}(\R^d))$]
We say that a function $\Phi\in \mathcal{S}^{1,1}(\mathcal{P}(\R^d))$ if there is a continuous version of the flat derivative $\f{\delta \Phi}{\delta \mu}(\mu,x)$ such that
\begin{itemize}
\item the mapping $(\mu,x)\mapsto \f{\delta \Phi}{\delta \mu}(\mu,x)$ is jointly continuous w.r.t. $(\mu,x)$,
\item the mapping $x\mapsto \f{\delta \Phi}{\delta \mu}(\mu,x)$ is twice continuously differentiable with bounded first and second order derivatives. 
\end{itemize}
\end{definition}
%}
%\todo{CB: What is the topology on $\mathcal{P}(\R^d)$? BD: I mentioned in the beginning of this section that it is furnished with the topology of weak convergence. No need to specify a precise metric since we do not use any in the whole paper}

\begin{lemma} [Chain rule] 
  \label{lem:chain-rule}
  Assume $(\mu_t)_{t \in [0,T]}$ solves \eqref{eq:dynamics-uncontrolled} and that the function $\Phi(t,\mu)$ from $[0,T]\times \mathcal{P}(\R^d)$ to $\R$ is differentiable w.r.t.~the time variable $t$ and is in $\mathcal{S}^{1,1}(\mathcal{P}(\R^d))$ w.r.t.~$\mu$.
  
%its flat derivative $x\mapsto \frac{\delta\Phi}{\delta \mu}%(t,\mu,x)$ belongs to $C^2(\R^d)$.
Then, we have
\begin{multline}\label{Ito}
  \Phi(t,\mu_t)=\Phi(0,\mu_0)+\int_0^t \left[\frac{\partial\Phi}{\partial s}(s,\mu_s)+\langle \mathcal{G}^*\mu_s,\frac{\delta\Phi}{\delta \mu}(s,\mu_s,\cdot)\rangle \right] \dd s \\
+ \sum_{t_i \le t} \left[\Phi\left(t_i,K_\epsilon(\cdot;\mu_{t_i^-},Y_i) \mu_{t_i^-}\right)-\Phi(t_i,\mu_{t_i^-})\right].
\end{multline}
\end{lemma}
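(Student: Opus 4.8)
The plan is to exploit the piecewise-deterministic structure of $(\mu_t)$: between the fixed times $t_1,\dots,t_n$ the flow is a deterministic Fokker--Planck evolution, while all randomness and all discontinuities are concentrated at the finitely many observation times. Accordingly I would first reduce the statement to an ordinary fundamental theorem of calculus for the scalar function $g(s) \coloneqq \Phi(s,\mu_s)$, which has finitely many jumps at $t_1,\dots,t_n$ and is $C^1$ on each open interval $(t_i,t_{i+1})$. Granting the per-interval derivative formula and the jump accounting, the claim follows by writing $g(t)-g(0)$ as the sum of the increments over the subintervals $[t_i,t_{i+1})\cap[0,t]$ plus the jumps $g(t_i)-g(t_i^-)$, and telescoping. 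It is worth stressing at the outset that, because the inter-observation dynamics is deterministic, no second-order (It\^o correction / Lions) term appears: only the first flat derivative enters, in contrast with the general semimartingale It\^o formula of \cite{guo_pham_wei23}.

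The core step is the deterministic chain rule on a fixed interval $(t_i,t_{i+1})$. For $s$ and $s+h$ in this interval I would split
\[
 g(s+h)-g(s)=\bigl[\Phi(s+h,\mu_{s+h})-\Phi(s,\mu_{s+h})\bigr]+\bigl[\Phi(s,\mu_{s+h})-\Phi(s,\mu_s)\bigr].
\]
The first bracket is handled by differentiability of $\Phi$ in its time argument together with the weak continuity of $s\mapsto\mu_s$ and the joint continuity of $\partial_s\Phi$, giving $h\,\frac{\partial\Phi}{\partial s}(s,\mu_s)+o(h)$. For the second bracket I would invoke the flat-derivative formula \eqref{gat},
\[
 \Phi(s,\mu_{s+h})-\Phi(s,\mu_s)=\int_0^1\Bigl\langle \mu_{s+h}-\mu_s,\tfrac{\delta\Phi}{\delta\mu}\bigl(s,\mu_s+\lambda(\mu_{s+h}-\mu_s),\cdot\bigr)\Bigr\rangle\,\dd\lambda,
\]
and use that, by \eqref{eq:FP-uncontrolled} read in the weak sense, $\mu_{s+h}-\mu_s=\int_s^{s+h}\mathcal{G}^*\mu_r\,\dd r$, so that for any fixed $C^2$ test function $\psi$ with bounded derivatives one has $\langle\mu_{s+h}-\mu_s,\psi\rangle=\int_s^{s+h}\langle\mu_r,\mathcal{G}\psi\rangle\,\dd r$. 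Taking $\psi=\frac{\delta\Phi}{\delta\mu}(s,\mu_s,\cdot)$ --- which is admissible precisely because $\Phi\in\mathcal{S}^{1,1}(\mathcal{P}(\R^d))$ makes $\mathcal{G}\frac{\delta\Phi}{\delta\mu}(s,\mu_s,\cdot)$ well defined and bounded --- this leading contribution divided by $h$ converges to $\langle\mathcal{G}^*\mu_s,\frac{\delta\Phi}{\delta\mu}(s,\mu_s,\cdot)\rangle$ as $h\to0$. Combining the two brackets yields $g'(s)=\frac{\partial\Phi}{\partial s}(s,\mu_s)+\langle\mathcal{G}^*\mu_s,\frac{\delta\Phi}{\delta\mu}(s,\mu_s,\cdot)\rangle$, the integrand of \eqref{Ito}.

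I expect the main obstacle to be controlling the linearization remainder
\[
 \int_0^1\Bigl\langle \mu_{s+h}-\mu_s,\tfrac{\delta\Phi}{\delta\mu}\bigl(s,\mu_s+\lambda(\mu_{s+h}-\mu_s),\cdot\bigr)-\tfrac{\delta\Phi}{\delta\mu}(s,\mu_s,\cdot)\Bigr\rangle\,\dd\lambda
\]
and showing it is $o(h)$ uniformly in $\lambda\in[0,1]$. Here I would again rewrite each pairing through $\mathcal{G}$ acting on the difference of flat derivatives and exploit the joint continuity from the definition of $\mathcal{S}^{1,1}$, together with the uniform boundedness of the first and second $x$-derivatives, to bound $\mathcal{G}\bigl(\tfrac{\delta\Phi}{\delta\mu}(s,\nu,\cdot)-\tfrac{\delta\Phi}{\delta\mu}(s,\mu_s,\cdot)\bigr)$ uniformly and make it vanish as $\nu\to\mu_s$ weakly. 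This step implicitly requires weak continuity of the flow $r\mapsto\mu_r$ on $(t_i,t_{i+1})$ and integrability of $r\mapsto\langle\mu_r,\mathcal{G}\psi\rangle$, both of which I would record from the well-posedness of \eqref{eq:FP-uncontrolled}. Finally, the jump term is immediate: at each $t_i\le t$ the function $g$ jumps by $g(t_i)-g(t_i^-)=\Phi\bigl(t_i,K_\varepsilon(\cdot;\mu_{t_i^-},Y_i)\mu_{t_i^-}\bigr)-\Phi(t_i,\mu_{t_i^-})$, which is exactly the summand in \eqref{Ito}; assembling the absolutely continuous increments over the subintervals and these jumps then completes the proof.
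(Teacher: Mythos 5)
Your argument is correct in substance, but it follows a genuinely different route from the paper's. The paper proves the chain rule in the style of \cite{guo_pham_wei23}: it first verifies \eqref{Ito} by direct computation for cylindrical functions $\Phi(\mu)=F(\ip{\mu}{f})$, extends to polynomial functions of finitely many pairings $\ip{\mu}{f_1},\dots,\ip{\mu}{f_n}$, and then invokes Stone--Weierstrass to pass to general $\Phi$ --- a verification-plus-density scheme. You instead argue directly: you exploit the piecewise-deterministic structure to reduce everything to the fundamental theorem of calculus for $g(s)=\Phi(s,\mu_s)$, differentiating along the flow via the defining integral representation \eqref{gat} of the flat derivative and the weak formulation of \eqref{eq:FP-uncontrolled}, with the jumps read off from \eqref{eq:dynamics-uncontrolled} at the observation times. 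Your route is more self-contained and arguably more honest about where the regularity is used: the density step in the paper's sketch is actually delicate, since uniform approximation of $\Phi$ by cylinder polynomials does not by itself give convergence of the flat derivatives, whereas your proof never needs to approximate $\Phi$ at all. What the paper's approach buys is that for cylinder functions the chain rule is an elementary finite-dimensional computation (cf.\ \eqref{eq:example-gateaux-derivative}), so no linearization remainder has to be estimated. One caveat on your remainder step: to conclude that $\mathcal{G}\bigl(\tfrac{\delta\Phi}{\delta\mu}(s,\nu,\cdot)-\tfrac{\delta\Phi}{\delta\mu}(s,\mu_s,\cdot)\bigr)$ vanishes as $\nu\to\mu_s$ you need continuity \emph{in $\mu$} of the first and second $x$-derivatives of the flat derivative (equivalently, joint continuity of $\mathcal{G}\tfrac{\delta\Phi}{\delta\mu}$), whereas the class $\mathcal{S}^{1,1}$ as defined in the paper only guarantees joint continuity of the flat derivative itself together with boundedness of its $x$-derivatives; without the stronger continuity your estimate gives $O(h)$ rather than $o(h)$. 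This is a mild strengthening of the hypotheses, comparable in spirit to the gaps the paper's own sketch leaves open, and your identification of the absence of any second-order It\^{o} correction --- because all randomness sits in the finitely many Bayesian jumps --- is exactly right.
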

\begin{proof}[Sketch of proof, following \cite{guo_pham_wei23}.]
  The proof of the chain rule lemma is first checked for the class of functions $\Phi(\mu):=F(\langle \mu,f\rangle)$ for a differentiable function $F:\,\,\R\longrightarrow \R$ and a bounded function $f:\,\,\R^d\longrightarrow \R$. Then, the result is naturally extended to the class of functions 
  $$
  \Phi(\mu):=F(\langle \mu,f_1\rangle,\langle \mu,f_2\rangle,\dots,\langle \mu,f_n\rangle)
  $$ for any fixed $n\in \mathbb{N}$ and polynomial $F:\,\,\R^n\longrightarrow \R$ and polynomials $f_1,f_2,\ldots, f_n:\,\ \R^d\longrightarrow \R$. Finally, apply the Stone-Weierstrass (density) theorem to conclude that the chain rule is valid for any differentiable function $\Phi(\mu)$. %\todo[]{Cite Pham for the case without observation.}
\end{proof}

%\todo[inline]{Harmonize notation: $\mu(t) \to \mu_t$ -- for pragmatic reasons.}

% Using this notion of a derivative, we can formulate a chain rule.
% \begin{lemma}
%   \label{lem:chain-rule}
%   If $\mu_t$ solves~\eqref{eq:dynamics-uncontrolled} and $\Phi$ is sufficiently regular\todo{CB: Please clarify!}, then
%   \begin{multline*}
%     \Phi(t, \mu_t) = \Phi(0, \mu_0) + \int_0^t \left[ \f{\partial \Phi}{\partial s}(s, \mu_s) + \ip{\mathcal{G}^\ast \mu_s}{\f{\delta \Phi}{\delta \mu}(\mu_s)} \right] \dd s \\+ \int_{(0,t]} \left[ \Phi\left( K_\varepsilon(\cdot; \mu_{s^-}, Y_{N_s}) \mu_{s^-} \right) - \Phi(\mu_{s^-}) \right] \dd N_s.
%   \end{multline*}
% \end{lemma}

For the controlled dynamics, we consider the situation of the generator $\mathcal{G}$ depending on a control $\alpha$.
In general, we assume that both the drift $b$ and the diffusion $\sigma$ depend on the control $\alpha$.
In addition, we \emph{may} also assume $\varepsilon$ to be a control, which we then also denote by $\beta$ -- i.e., we consider controls $\alpha$ of the dynamics and $\beta$ of the measurement we take.
Hence, we consider $u \coloneqq (\alpha, \beta) \in U \subset %\textcolor{blue}{
\R^{k_1} \times \R^{k_2}$.
%}
Consider the filtration $(\mathcal{F}_t)_{t \in [0,T]}$ generated by the observations, i.e.,
\begin{equation*}
  \mathcal{F}_t = \sigma\left( \set{Y_i: t_i \le t} \right).
\end{equation*}
Note that the filtration is constant between observation times and ``jumps'' at the observation times $t_i$ -- we denote $\mathcal{F}_{t_i^-} \coloneqq \mathcal{F}_{t_{i-1}}$.
We recall that the controls $\alpha$ and $\beta$ play very different roles:
\begin{itemize}
\item $\alpha$ acts on the dynamics of the unobserved process in continuous time, and is adapted to the filtration $(\mathcal{F}_t)_{t \in [0,T]}$;
\item $\beta$ only acts as an impulse control at the observation times $t_i$, but can only use information available just before $t_i$ -- hence, it is predictable w.r.t. the filtration $(\mathcal{F}_t)_{t \in [0,T]}$ and not just adapted in general.
\end{itemize}

\emph{Admissible controls} are processes in the following class:% \todo[inline]{CB: Determine exact admissibility conditions. Is this a good formulation?
% Raul: I think that the Filtration needs to be enlarged to contain the information from the initial measure, $\mu_0$. Additionally, I think it is an overkill to make $\beta$ a process. It is enough to consider the $n$ values to be a finite sequence, adapted to the natural filtration. We will have $\alpha_t = \alpha(t,\mu_t)$ and $\beta_i = \beta(t_i^{-},\mu_{t_i^-})$}
\begin{definition}
  \label{def:admissible-controls}
  Let $\mathcal{U}[0,T]$ denote the set of all controls $u = (\alpha,\beta)$ such that
  \begin{enumerate}
  \item $\alpha$ is adapted w.r.t.~the filtration $(\mathcal{F}_t)_{t \in [0,T]}$.
  \item $\beta$ is a piece-wise constant process, which only jumps at observation times $t_i$, where it is left-continuous. As a process, it is \emph{predictable} w.r.t.~the filtration $(\mathcal{F}_t)_{t \in [0,T]}$.
  \item For any $t$, we have $u_t = (\alpha_t, \beta_t) \in U$.
  \end{enumerate}
  We also introduce the notation $\mathcal{U}[t,T]$, defined in the obvious way.
\end{definition}
% \begin{equation}
%   \label{eq:admissible-controls}
%   \mathcal{U}[0,T] \coloneqq \Set{u = (\alpha,\beta) : [0,T] \to U | \alpha \text{ adapted to } (\mathcal{F}_t)_{t \in [0,T]}, \, \beta \text{ predictable w.r.t.~} (\mathcal{F}_t)_{t \in [0,T]}}.
% \end{equation}
Note that $\beta$ only acts at the observation times $t_1, \ldots, t_n$. For this reason, abusing notation, we denote $\beta_i \coloneqq \beta_{t_i}$ -- keeping in mind that $\beta_i \in \mathcal{F}_{t_i^-}$.
Furthermore, $\alpha$ being adapted to $(\mathcal{F}_t)_{t \in [0,T]}$ does, of course, not imply that $\alpha$ is piecewise-constant as well. It only implies that the dynamics of $\alpha_t$ between observation times has to be deterministic -- conditional on all the observation made before time $t$.

We denote by $\mathcal{G}(\alpha)$ the \emph{controlled generator} and by $K_\beta$ the \emph{controlled update}.
The dynamics of the \emph{controlled process} can, hence, be described by
\begin{subequations}
  \label{eq:dynamics-controlled}
  \begin{align}
    \dd \mu_t^u &= \mathcal{G}^\ast(\alpha) \mu_t^u \dd t, \quad t_i \le t < t_{i+1},\\
    \mu_{t_{i+1}}^u(\dd x) &=  K_{\beta_{i+1}}\biggl(x; \mu^u_{t_{i+1}^-}, Y^{\beta_{i+1}, \mu^u_{t_{i+1}^-}}_{i+1}\biggr) \mu^u_{t_{i+1}^-}(\dd x),
  \end{align}
\end{subequations}
for $0\le i <n$ and $\mu^u_0 = \mu_0 \in \mathcal{P}(\R^d)$.
The cost functional associated to the stochastic optimal control problem is given by
\begin{equation}
  \label{eq:cost-functional}
  J(u) \coloneqq \E\left[ \int_0^T \ell\left( t, \mu^u_t, \alpha_t \right) \dd t + \sum_{i=1}^n h(t_i, \mu^u_{t_i^-}, \beta_{i}) + g(\mu^u_T) \right],
\end{equation}
where $\ell$ is the running cost associated to the control of the underlying process (generally only depending on $\alpha$), whereas $h$ is the cost associated to the design of the measurement (generally only depending on $\beta$).
Finally, $g$ is the terminal cost, which in most cases is linear on the measure.

The optimal control problem is to minimize~\eqref{eq:cost-functional} over all admissible controls $u$, subject to the dynamics \eqref{eq:dynamics-controlled}.
An optimizing control is denoted by $u^\ast$ and the corresponding optimal path by $\mu_t^\ast$.
Under the above assumptions, we expect optimal controls to be of feedback form, i.e., $u_t = \theta(t, \mu_t)$.

\subsection{Dynamic programming principle}
\label{sec:dynam-progr-princ}

For given $\mu \in \mathcal{P}(\R^d)$ and $0 \le s < T$, consider the (weak) solution of the controlled system
\begin{subequations}
  \label{eq:controlled-dynamics-s-T}
  \begin{align}
    \dd \mu_t^u &= \mathcal{G}^\ast(\alpha) \mu_t^u \dd t, \quad s \wedge t_i < t < t_{i+1},\\
    \mu_{t_{i+1}}^u(\dd x) &= K_{\beta_{i+1}}\biggl(x; \mu^u_{t_{i+1}^-}, Y^{\beta_{i+1}, \mu^u_{t_{i+1}^-}}_{i+1}\biggr) \mu^u_{t_{i+1}^-}(\dd x),
  \end{align} 
\end{subequations}
(for all $i$ s.t.~$t_i \ge s$) with $\mu^u_s = \mu$, denoted by $\mu_t^u(s,\mu)$ for $s \le t \le T$.
By uniqueness of solutions, we have the \emph{flow property} for $u \in \mathcal{U}[s,T]$ and $s \le t \le v \le T$:
\begin{equation}
  \label{eq:controlled-dynamics-flow}
  \mu^u_v(s,\mu) = \mu^u_v(t, \mu^u_t(s,\mu)).
\end{equation}
The flow property implies that the cost function associated to the control $u$ satisfies
\begin{equation*}\begin{array}{lll}
  J(s, \mu; u|_{[s,T]}) &\coloneqq \E\left[ \int_s^T \ell(t, \mu_t^u(s, \mu), u_t) \dd t + \sum_{i=\floor{s}+1}^n h(t_i, \mu^u_{t_i^-}(s, \mu), \beta_i) + g(\mu^u_T) \right]\\
  &= \E\left[ \int_s^t \ell(r, \mu_r^u(s, \mu), u_r) \dd r + \sum_{i=\floor{s}+1}^{\floor{t}} h(t_i, \mu^u_{t_i^-}(s, \mu), \beta_i)\right. \\ & \qquad\qquad\qquad\qquad \qquad\qquad\qquad\qquad\qquad\qquad\left. + J\left(t, \mu^u_t(s, \mu); u|_{[t,T]} \right) \right].
  \end{array}
\end{equation*}

\begin{definition}
  \label{def:value-function}
  The \emph{value function} $V:[0,T] \times \mathcal{P}(\R^d) \to \R$ is defined by $V(T, \mu) \coloneqq g(\mu)$ and
  \begin{equation*}
    V(s,\mu) \coloneqq \inf_{u \in \mathcal{U}[s,T]} J(s, \mu; u), \quad 0 \le s <T, \quad \mu \in \mathcal{P}(\R^d).
  \end{equation*}
\end{definition}

The value function satisfies the \emph{dynamic programming principle}:
\begin{theorem}[Dynamic Programming Principle]
  \label{thr:dynamic-programming}
  For any $0 \le s < T$, $\mu \in \mathcal{P}(\R^d)$, and $s \le t \le T$ we have
  \begin{equation}\label{eq:DPP}
    V(s, \mu) = \inf_{u \in \mathcal{U}[s,t]} \E\left[ \int_s^t \ell(r, \mu_r^u(s, \mu), u_r) \dd r + \sum_{i=\floor{s}+1}^{\floor{t}} h(t_i, \mu^u_{t_i^-}(s, \mu), \beta_i) + V\left( t, \mu^u_t(s, \mu) \right) \right].
  \end{equation}
  In particular, at jump times $t_i$ -- interpreting $s = t_i^-$ and $t = t_i$ -- we have
 %  \todo{RT: This statement of DP is a bit weird: we should try to use a notation that removes $\alpha$ from the choice at the measurement time}
  \begin{equation}\label{eq:DPP-jump}
    V(t_i^-, \mu) = \inf_{(\alpha,\beta) \in U} \left\{ h(t_i, \mu, \beta) + \E\left[ V(t_i, K_\beta(\cdot; \mu, Y^{\beta,\mu}_i) \mu) \right] \right\},
  \end{equation}
  where the expectation is effectively taken over $Y^{\beta,\mu}_i$.
\end{theorem}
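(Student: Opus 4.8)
The proof proceeds by establishing the two inequalities ``$\le$'' and ``$\ge$'' in \eqref{eq:DPP} separately. The two structural inputs are the flow property \eqref{eq:controlled-dynamics-flow} together with the Markov property of $(\mu_t^u)$ noted above -- which guarantees that the control problem restarted from $(t,\nu)$ has value $V(t,\nu)$ irrespective of the history leading to $\nu$ -- and the additive cost decomposition displayed just before Definition~\ref{def:value-function}, i.e.\ $J(s,\mu;u)=\E\bigl[\int_s^t \ell\,\dd r + \sum_{i=\floor{s}+1}^{\floor{t}} h + J(t,\mu_t^u(s,\mu);u|_{[t,T]})\bigr]$. Throughout, I write $u^1\in\mathcal{U}[s,t]$ for the first leg of a control and $u^2\in\mathcal{U}[t,T]$ for its continuation.

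For the inequality $V(s,\mu)\ge$ (right-hand side of \eqref{eq:DPP}) -- the routine direction -- I would fix an arbitrary $u\in\mathcal{U}[s,T]$, restrict it to $u|_{[s,t]}\in\mathcal{U}[s,t]$, and insert the bound $J(t,\nu;u|_{[t,T]})\ge V(t,\nu)$, which is immediate from Definition~\ref{def:value-function}, into the decomposition above. This yields $J(s,\mu;u)\ge\E[\cdots+V(t,\mu_t^u(s,\mu))]$; since the bracketed expectation depends only on $u|_{[s,t]}$ and is therefore an admissible competitor in the infimum defining the right-hand side, it dominates that infimum. Taking the infimum over $u\in\mathcal{U}[s,T]$ on the left then gives the claim.

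The reverse inequality is where the real work lies. Fixing $\varepsilon>0$ and an arbitrary $u^1\in\mathcal{U}[s,t]$, I would select, for (almost) every realisation of the terminal data $\mu_t^{u^1}(s,\mu)$, a continuation $u^2\in\mathcal{U}[t,T]$ that is $\varepsilon$-optimal for the problem started at $(t,\mu_t^{u^1}(s,\mu))$, i.e.\ $J(t,\mu_t^{u^1}(s,\mu);u^2)\le V(t,\mu_t^{u^1}(s,\mu))+\varepsilon$. Concatenating $u^1$ on $[s,t]$ with $u^2$ on $[t,T]$ produces a candidate $u\in\mathcal{U}[s,T]$, and the flow property \eqref{eq:controlled-dynamics-flow} ensures the path from $\mu$ under $u$ agrees after time $t$ with the path from $\mu_t^{u^1}(s,\mu)$ under $u^2$. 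The decomposition together with $V(s,\mu)\le J(s,\mu;u)$ then gives
\[
V(s,\mu)\le J(s,\mu;u) \le \E\left[\,\cdots + V\bigl(t,\mu_t^{u^1}(s,\mu)\bigr)\,\right] + \varepsilon,
\]
where $\cdots$ denotes the running and measurement costs accumulated under $u^1$ on $[s,t]$; taking the infimum over $u^1\in\mathcal{U}[s,t]$ and letting $\varepsilon\downarrow0$ closes the argument. \textbf{The main obstacle} is making this $\varepsilon$-optimal continuation selectable so that the glued control is genuinely admissible, i.e.\ with $\alpha$ adapted and $\beta$ predictable w.r.t.\ $(\mathcal{F}_t)$. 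This is a measurable-selection problem over the Polish space $\mathcal{P}(\R^d)$ (weak topology): one must choose $u^2$ as a measurable function of $\mu_t^{u^1}(s,\mu)$, which feeds into the continuation through $\mathcal{F}_t$. I would resolve it via a standard measurable-selection theorem (Jankov--von Neumann / Kuratowski--Ryll-Nardzewski), after verifying the requisite measurability of $(t',\nu)\mapsto V(t',\nu)$ and of the map taking initial data to the value of a fixed admissible control; the infinite-dimensionality of the state is not an essential obstruction since $\mathcal{P}(\R^d)$ is Polish.

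Finally, the jump relation \eqref{eq:DPP-jump} follows by applying the same two-inequality argument across the single observation at $t_i$, taking the initial time just before $t_i$ and the final time just after the Bayesian update at $t_i$. No continuous time elapses across this transition, so the running-cost integral vanishes and the only cost incurred is the measurement term $h(t_i,\mu,\beta)$; moreover the generator control $\alpha$ acts solely through the continuous dynamics, so it does not enter the one-step objective and the infimum over $U$ effectively reduces to a minimisation over the admissible measurement controls $\beta$. The remaining ingredient is Assumption~\ref{ass:Markovian-update}: the post-jump measure $K_\beta(\cdot;\mu,Y_i^{\beta,\mu})\mu$ depends only on $(\mu,\beta)$ and the independent randomness $\Gamma_i$, so the conditional expectation collapses to the plain expectation over $Y_i^{\beta,\mu}$ appearing in \eqref{eq:DPP-jump}.
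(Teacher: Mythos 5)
Your proposal is correct and follows essentially the same route as the paper's proof: both directions rest on the cost decomposition induced by the flow property \eqref{eq:controlled-dynamics-flow}, with the easy inequality obtained by restricting an arbitrary control on $[s,T]$ and bounding the continuation cost below by $V$, the reverse inequality by concatenating a first-leg control with a (near-)optimal continuation, and the jump relation \eqref{eq:DPP-jump} read off as the degenerate one-step case at $s=t_i^-$, $t=t_i$. The only difference is one of care rather than substance: where the paper simply writes ``taking the infimum over $u^\prime$'' to pass from $\E\left[\cdots + J(t,\mu^u_t(s,\mu);u^\prime)\right]$ to $\E\left[\cdots + V(t,\mu^u_t(s,\mu))\right]$, you correctly identify that this interchange of infimum and expectation conceals a measurable-selection problem over the Polish space $\mathcal{P}(\R^d)$ and propose the Jankov--von Neumann / Kuratowski--Ryll-Nardzewski machinery to justify gluing an $\varepsilon$-optimal continuation measurably in $\mu^u_t(s,\mu)$ -- a point the paper leaves implicit, so your write-up is, if anything, the more complete one.
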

We note that the objective function in the optimization problem at the observation time $t_i$ stated above does not, in fact, depend on $\alpha$ at all. Nonetheless, minimization over $(\alpha,\beta)$ is used to reflect the fact that the constraint $(\alpha,\beta) \in U$ acts on both components of the control.
\begin{proof}[Proof of Theorem~\eqref{thr:dynamic-programming}]
  The proof is standard. We recall it for convenience. Denote the right-hand side of \eqref{eq:DPP} by $W(s,\mu)$.
  
  For any $\epsilon>0$, there exists a control $u^{\epsilon}\in \mathcal{U}[s,T]$ such that
  \begin{align*}
    V(s,\mu) + \epsilon &\ge J(s,\mu;u^{\epsilon}) \\
               &= \E\Biggl[ \int_s^t \ell(r, \mu_r^{u^\epsilon}(s, \mu), u^\epsilon_r) \dd r + \sum_{i=\floor{s}+1}^{\floor{t}} h(t_i, \mu^{u^\epsilon}_{t_i^-}(s, \mu), \beta_i) \\
    & \quad\quad + J\left(t, \mu^{u^\epsilon}_t(s, \mu); {u^\epsilon}|_{[t,T]} \right) \Biggr] \\
               &\ge  \E\Biggl[ \int_s^t \ell(r, \mu_r^{u^\epsilon}(s, \mu), u^\epsilon_r) \dd r + \sum_{i=\floor{s}+1}^{\floor{t}} h(t_i, \mu^{u^\epsilon}_{t_i^-}(s, \mu), \beta_i) + V(t, \mu_t^{u^\epsilon}(s,\mu) \Biggr] \\
               &\ge W(s,\mu).
  \end{align*}
  
  To obtain the reverse inequality,  we have for any $v \in \mathcal{U}[s,T]$
  \begin{multline*}
    V(s,\mu) \le J(s,\mu;v)=\\
    \E\left[ \int_s^t \ell(r, \mu_r^{v}(s, \mu), v_r) \dd r + \sum_{i=\floor{s}+1}^{\floor{t}} h(t_i, \mu^{v}_{t_i^-}(s, \mu), \beta_i) +J(t,\mu^{v}_t(s,\mu);v)\right].
  \end{multline*}
  In particular, given arbitrary controls $u = (\alpha, \beta), u^\prime = (\alpha^\prime, \beta^\prime) \in \mathcal{U}[s,T]$, by choosing
  \begin{equation*}
    \nu(r)=\begin{cases} u(r), & r\in [s,t], \\
      u^\prime(r), & r\in (t,T],        
    \end{cases}
  \end{equation*}
  we have
  \begin{multline*}
    \int_s^t \ell(r, \mu_r^{v}(s, \mu), v_r) \dd r + \sum_{i=\floor{s}+1}^{\floor{t}} h(t_i, \mu^{v}_{t_i^-}(s, \mu), \beta_i) = \\
    \int_s^t \ell(r, \mu_r^{u}(s, \mu), u_r) \dd r + \sum_{i=\floor{s}+1}^{\floor{t}} h(t_i, \mu^{u}_{t_i^-}(s, \mu), \beta_i),
  \end{multline*}
  and by the flow property \eqref{eq:controlled-dynamics-flow}, we obtain
  \[
    J(t,\mu^{v}_t(s,\mu);\nu) = J(t,\mu^{u}_t(s,\mu;t);u^\prime).
  \]
  Therefore, by taking the infimum over $u^\prime \in \mathcal{U}[s,T]$ we obtain 
  \begin{equation*}
    V(s,\mu) \le \E\left[\int_s^t \ell(r, \mu_r^{u}(s, \mu), u_r) \dd r + \sum_{i=\floor{s}+1}^{\floor{t}} h(t_i, \mu^{u}_{t_i^-}(s, \mu), \beta_i) + V(t,\mu^{u}_t(s,\mu))\right].
  \end{equation*}
  Since $u$ is an arbitrary admissible control, we finally obtain $V(s,\mu)\le W(s,\mu)$.
\end{proof}

\subsection{The HJB equation}
\label{sec:optimal-control-via}

We are now ready to derive the Hamilton--Jacobi--Bellman (HJB) equation for our control problem.
We first introduce the \emph{Hamiltonian} for the control problem outside the observation dates.
For $t \in [0,T]$ and $\mu \in \mathcal{P}(\R^d)$ and %\textcolor{blue}{
$p \in C_b(\R^d)$
%}
 we set
\begin{equation}
  \label{eq:Hamiltonian}
  \mathcal{H}(t, \mu, p) \coloneqq \inf_{(\alpha,\beta) \in U} \left\{ \ip{\mathcal{G}^\ast(\alpha) \mu}{p} + \ell(t, \mu, \alpha) \right\}.
\end{equation}

\begin{theorem}[HJB equation]
  \label{thr:HJB-equation}
  Provided that the value function $V(t,\mu)$ is differentiable w.r.t. the time variable $t$ and is in $\mathcal{S}^{1,1}(\mathcal{P}(\R^d))$ w.r.t. $\mu$, it satisfies the HJB equation
  \begin{subequations}
    \label{eq:HJB-equation}
    \begin{align}
      \label{eq:HJB-between-observations}
      \frac{\partial V}{\partial t} (t,\mu) &+ \mathcal{H}\left(t, \mu, \frac{\delta V}{\delta \mu}(t, \mu, \cdot) \right) = 0, \quad t_{i} \le t < t_{i+1},\ i=1, \ldots, n,\\
      \label{eq:HJB-observations}
      V(t_i^-, \mu) &= \inf_{(\alpha,\beta) \in U} \left\{ h(t_i, \mu, \beta) + E\left[ V(t_i, K_\beta(\cdot; \mu, Y^{\beta,\mu}_i) \mu) \right] \right\},\quad i=1, \ldots, n,\\
      \label{eq:HJB-terminal}
      V(T,\mu) &= g(\mu).
    \end{align}
  \end{subequations}
  Fix a function $u^\ast:[0,T] \times \mathcal{P}(\R^d) \to U$ by
  \begin{equation}
    \label{eq:optimal-control-from-value}
    u^\ast(t, \mu) \in
    \begin{cases}
      \argmin_{(\alpha,\beta) \in U} \left\{ \ip{\mathcal{G}^\ast(\alpha) \mu}{\frac{\delta V}{\delta\mu}(t, \mu,\cdot)} + \ell(t, \mu, \alpha) \right\}, & t \notin \set{t_1, \ldots, t_n},\\
      \argmin_{(\alpha,\beta) \in U}\left\{ h(t_i, \mu, \beta) + E\left[ V(t_i, K_\beta(\cdot; \mu, Y^{\beta,\mu}_i) \mu) \right] \right\}, & t = t_i^-,\ i \in \set{1, \ldots, n},
    \end{cases}
  \end{equation}
  %\textcolor{blue}{
  provided the set of minimizers in \eqref{eq:optimal-control-from-value} is not empty.
  %} 
  Additionally, assume that $\mu^\ast_t$ denotes an optimal path.
  Then an \emph{optimal control} is defined by $u^\ast_t \coloneqq u^\ast(t, \mu^\ast_t)$.
\end{theorem}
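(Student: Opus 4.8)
The plan is to derive the three relations in \eqref{eq:HJB-equation} from the dynamic programming principle (Theorem~\ref{thr:dynamic-programming}) combined with the chain rule (Lemma~\ref{lem:chain-rule}), under the standing regularity hypotheses on $V$. The terminal condition \eqref{eq:HJB-terminal} is immediate from the definition $V(T,\mu) := g(\mu)$, and the observation-time relation \eqref{eq:HJB-observations} is literally the jump form \eqref{eq:DPP-jump} of the DPP; hence the substance of the proof is the interior equation \eqref{eq:HJB-between-observations}.

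For \eqref{eq:HJB-between-observations}, fix $t \in [t_i, t_{i+1})$ and $\mu \in \mathcal{P}(\R^d)$, and pick $\delta>0$ so small that $[t,t+\delta] \subset [t_i,t_{i+1})$ contains no observation time. On such an interval the filtration $(\mathcal{F}_r)$ is constant, so for the problem restarted at $(t,\mu)$ admissible controls are deterministic there and the update sum in \eqref{eq:DPP} is empty; the DPP collapses to
\begin{equation*}
  V(t,\mu) = \inf_{u \in \mathcal{U}[t,t+\delta]} \left[ \int_t^{t+\delta} \ell\bigl(r, \mu_r^u(t,\mu), \alpha_r\bigr)\,\dd r + V\bigl(t+\delta, \mu_{t+\delta}^u(t,\mu)\bigr) \right],
\end{equation*}
with $\mu_r^u$ governed by the deterministic flow $\dd \mu_r^u = \mathcal{G}^\ast(\alpha_r)\mu_r^u\,\dd r$. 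For the inequality ``$\le$'' I would insert a constant control $\alpha_r \equiv a$ with $(a,b)\in U$, expand $V(t+\delta,\mu_{t+\delta}^u)$ using Lemma~\ref{lem:chain-rule} (whose jump term is absent), cancel $V(t,\mu)$, divide by $\delta$, and send $\delta\downarrow 0$; joint continuity then gives $0 \le \frac{\partial V}{\partial t}(t,\mu) + \langle \mathcal{G}^\ast(a)\mu, \frac{\delta V}{\delta\mu}(t,\mu,\cdot)\rangle + \ell(t,\mu,a)$, and infimizing over $(a,b)\in U$ yields $0 \le \frac{\partial V}{\partial t}(t,\mu) + \mathcal{H}(t,\mu,\frac{\delta V}{\delta\mu}(t,\mu,\cdot))$.

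For the reverse inequality I would select, for each $\delta$, a control $u^\delta$ that is $\delta^2$-optimal in the reduced DPP, expand $V(t+\delta,\mu_{t+\delta}^{u^\delta})$ by Lemma~\ref{lem:chain-rule}, and bound the resulting integrand from below via $\ell(r,\nu,\alpha) + \langle \mathcal{G}^\ast(\alpha)\nu, \frac{\delta V}{\delta\mu}(r,\nu,\cdot)\rangle \ge \mathcal{H}(r,\nu,\frac{\delta V}{\delta\mu}(r,\nu,\cdot))$ evaluated at $\nu = \mu_r^{u^\delta}$; dividing by $\delta$ and letting $\delta\downarrow 0$ produces the opposite inequality, establishing \eqref{eq:HJB-between-observations}. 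The optimality of the feedback $u^\ast$ then follows by a verification argument: applying Lemma~\ref{lem:chain-rule} to $V(\cdot,\mu^\ast_\cdot)$ along the trajectory generated by $u^\ast$, the interior increment cancels the running cost exactly because $u^\ast$ attains the argmin in $\mathcal{H}$ and $V$ solves \eqref{eq:HJB-between-observations}, while each jump increment is matched by \eqref{eq:HJB-observations} with the optimal $\beta^\ast$; summing and taking expectations telescopes to $V(0,\mu_0) = J(u^\ast)$, which, combined with $V \le J$, gives optimality.

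The main obstacle is the rigorous limit $\delta\downarrow 0$ in the reverse inequality: the near-optimal controls $u^\delta$ and trajectories $\mu_r^{u^\delta}$ depend on $\delta$, so passing to the limit in $\frac{1}{\delta}\int_t^{t+\delta}\bigl[\frac{\partial V}{\partial r}(r,\mu_r^{u^\delta}) + \mathcal{H}(r,\mu_r^{u^\delta},\frac{\delta V}{\delta\mu}(r,\mu_r^{u^\delta},\cdot))\bigr]\,\dd r$ requires a modulus of continuity for the Fokker--Planck flow near $t$ that is uniform in the admissible control (so that $\mu_r^{u^\delta}\to\mu$ as $r\to t$, $\delta\to 0$), together with joint continuity of $(r,\nu)\mapsto \frac{\partial V}{\partial r}(r,\nu) + \mathcal{H}(r,\nu,\frac{\delta V}{\delta\mu}(r,\nu,\cdot))$ --- precisely where the $\mathcal{S}^{1,1}$ regularity of $V$ and the continuity of the coefficients are consumed. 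One should also record that deterministic controls on $[t,t+\delta]$ are admissible per Definition~\ref{def:admissible-controls}, which holds because no observation falls in that interval.
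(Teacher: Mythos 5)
Your proposal is correct, and it coincides with the paper's proof for the easy half: both of you derive $\frac{\partial V}{\partial t} + \mathcal{H} \ge 0$ by inserting a constant control into the DPP, applying the chain rule of Lemma~\ref{lem:chain-rule} (with empty jump term, since the interval contains no observation time), dividing by the time step and passing to the limit; and both of you obtain \eqref{eq:HJB-observations} as the verbatim jump form \eqref{eq:DPP-jump} of the DPP and \eqref{eq:HJB-terminal} from the definition of $V$. Where you genuinely diverge is the reverse inequality: the paper \emph{assumes an optimal control $u^\ast$ exists} and exploits equality in the DPP along the optimal trajectory to conclude (the paper is candid about the informality, writing that this ``suggests'' the value function satisfies the equation --- the classical heuristic derivation), whereas you run the standard rigorous argument with $\delta^2$-optimal controls $u^\delta$, bounding the integrand from below by the Hamiltonian and sending $\delta \downarrow 0$. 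Your route avoids postulating an optimizer, at the price of the uniform-in-control modulus of continuity for the Fokker--Planck flow and the joint continuity of $(r,\nu)\mapsto \frac{\partial V}{\partial r}(r,\nu)+\mathcal{H}\bigl(r,\nu,\frac{\delta V}{\delta\mu}(r,\nu,\cdot)\bigr)$, which you correctly flag as the crux; the paper sidesteps this entirely via its existence assumption, consistent with the theorem's phrasing ``assume that $\mu^\ast_t$ denotes an optimal path''. Finally, you establish the feedback-optimality claim by a telescoping verification argument (chain rule along $\mu^\ast$, interior increments cancelling the running cost since $u^\ast$ attains the argmin, jump increments matched by \eqref{eq:HJB-observations}, then expectations); the paper does not carry this out inside the proof of Theorem~\ref{thr:HJB-equation} but defers it to the separate verification theorem (Theorem~\ref{thr:HJB-verification}), whose proof is essentially your argument. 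In short, your write-up is a sound variant of the paper's derivation --- more self-contained on the optimality claim and more rigorous at the reverse inequality, in exchange for the additional flow-continuity hypotheses you name.
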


Note that the minimization problem for the $\beta$-component of $u^\ast$ is not unique for $t$ not an observation time, which allows us to choose a piecewise-constant, predictable version, enforcing the admissibility conditions.  %and vice-versa for the $\alpha$-component.
We write $u^\ast = (\alpha^\ast,\beta^\ast)$ and, continuing our abuse of notation, $\beta^\ast_i = \beta^\ast_{t_i^-}$.

\begin{proof}[Proof of Theorem \ref{thr:HJB-equation}.]
%\textcolor{blue}{
First we note that at observation times $t_i$, \eqref{eq:DPP-jump} is exactly \eqref{eq:HJB-observations}. We only need to derive \eqref{eq:HJB-between-observations}.
Consider in \eqref{eq:DPP} a constant control $u:=a=(\bar{\alpha},\bar{\beta})$ for some arbitrary $a\in U$. For any $t, \theta>0$ such that,  for some $i=1,\ldots,n$, 
$[t , t+\theta]\subset [t_i, t_{i+1})$, we have
\begin{equation*}\begin{array}{ll}
V(t,\mu)\le \E\left[\int_t^{t+\theta}\ell(s,\mu_s^{a}(t,\mu),a)\,ds+V(t+\theta,\mu^{a}_{t+\theta}(t,\mu))\right]. 
\end{array}
\end{equation*}
Since $V$ satisfies the smoothness assumptions of the chain rule \eqref{Ito}, we obtain
\begin{multline*}
\E\left[\int_t^{t+\theta}\frac{\partial V}{\partial s}(s,\mu_s^{a}(t,\mu))+\langle \mathcal{G}^*(a)\mu_s^{a}(t,\mu),\frac{\delta V}{\delta\mu}(s,\mu_s^{a}(t,\mu),\cdot)\rangle + \ell(s,\mu_s^{a}(t,\mu),a)\, \dd s \right]\ge 0.
\end{multline*}
By the mean-value theorem, dividing by $\theta$ and then sending it to $0$, yields
\[
\frac{\partial V}{\partial t}(t,\mu)+\langle \mathcal{G}^*(a)\mu,\frac{\delta V}{\delta\mu}(t, \mu,\cdot)\rangle + \ell(t,\mu,a) \ge 0, \quad t_{i}\le t<t_{i+1},\quad i=1,\ldots,n.
\]
Since this inequality is true for all $a\in U$, we obtain
\begin{equation}\label{HJB-p}
\frac{\partial V}{\partial t}(t,\mu)+ \mathcal{H}\left(t, \mu, \frac{\delta V}{\delta \mu}(t, \mu, \cdot) \right)\ge 0, \quad t_{i}\le t<t_{i+1},\quad i=1,\ldots,n.
\end{equation}
Now, suppose that $u^*=(\alpha^*,\beta^*)$ is an optimal control. Then the value function satisfies
\begin{equation*}
V(t,\mu)=\E\left[\int_t^{t+\theta}\ell(s,\mu_s^{u^*}(t,\mu),u^*_s)\, \dd s+\sum_{i=\floor{t}+1}^{\floor{t+\theta}} h(t_i, \mu^{u^*}_{t_i^-}(s, \mu), \beta^*_i)+V(t+\theta,\mu^{u^*}_{t+\theta}(t,\mu))\right]. 
\end{equation*}
In particular, at the jump times, we have
\begin{equation}\label{DPP-jump-1}
V(t_i^-, \mu_{t_i^-}^{u^*})= h(t_i, \mu_{t_i^-}^{u^*}, \beta^*_i) + E\left[ V(t_i, K_{\beta^*_i}(\cdot; \mu_{t_i^-}^{u^*}, Y^{\beta_i,\mu_{t_i^-}^{u^*}}_i) \mu_{t_i^-}^{u^*}) \right],\,\, i=1, \ldots, n.
\end{equation}
Thus, by using a similar argument as above between jump times,  we obtain 
\[
\frac{\partial V}{\partial t}(t,\mu)+\left[\langle \mathcal{G}^*(u^*)\mu,\frac{\delta V}{\delta\mu}(s, \mu)(\cdot)\rangle + \ell(s,\mu,u^*(s)) \right]= 0, \quad t_i\le t<t_{i+1},\,\, i=1,\ldots,n,
\]
which, in view of \eqref{HJB-p}, suggests the value function should satisfy
\begin{equation*}
\frac{\partial V}{\partial t}(t,\mu)+\underset{a\in U}{\inf\,} \left[\langle \mathcal{G}^*(a)\mu,\frac{\delta V}{\delta\mu}(s, \mu)(\cdot)\rangle + \ell(s,\mu,a)  \right] =0, \quad t_i\le t<t_{i+1},\,\, i=1,\ldots,n.
\end{equation*}
%}
\end{proof}

Next, we derive a verification theorem for our SOC problem.
\begin{theorem}[Verification theorem]
  \label{thr:HJB-verification}
  Let $W(t,\mu)$ be a solution to the HJB equation \eqref{eq:HJB-between-observations}, \eqref{eq:HJB-observations} and \eqref{eq:HJB-terminal}. 
  Then, 
  \begin{equation*}
    W(s,\mu) = \inf_{u \in \mathcal{U}[s,T]} J(s, \mu; u), \quad 0 \le s <T, \quad \mu \in \mathcal{P}(\R^d).
  \end{equation*}
 Furthermore, assume the function $\widehat{u}:[0,T] \times \mathcal{P}(\R^d) \to U$ satisfies
  \begin{equation*}
    \widehat{u}(t, \mu) \in
    \begin{cases}
      \argmin_{(\alpha,\beta) \in U} \left\{ \ip{\mathcal{G}^\ast(\alpha) \mu}{p} + \ell(t, \mu, \alpha) \right\}, & t \notin \set{t_1, \ldots, t_n},\\
      \argmin_{(\alpha,\beta) \in U}\left\{ h(t_i, \mu, \beta) + E\left[ W(t_i, K_\beta(\cdot; \mu, Y^{\beta,\mu}_i) \mu) \right] \right\}, & t = t_i^-,\ i \in \set{1, \ldots, n}.
    \end{cases}
  \end{equation*}
  Then, the feedback control $u^*$ given by $u^*_t:=\widehat{u}(t,\mu)$ is optimal.
 \end{theorem}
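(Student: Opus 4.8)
The plan is to run a standard verification argument, with the twist that the candidate value function is \emph{discontinuous in time} at the observation dates, so that the chain rule of Lemma~\ref{lem:chain-rule} must be applied piecewise on each interval $[t_i,t_{i+1})$ and then stitched together across the jumps via the observation condition \eqref{eq:HJB-observations}. Throughout I fix a function $W$ solving \eqref{eq:HJB-between-observations}--\eqref{eq:HJB-terminal} and an arbitrary admissible control $u=(\alpha,\beta)\in\mathcal{U}[s,T]$ with associated path $\mu^u_t(s,\mu)$; the standing hypotheses ($W$ differentiable in $t$ and in $\mathcal{S}^{1,1}(\mathcal{P}(\R^d))$ in $\mu$) are exactly what licenses the use of the chain rule.

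First I would establish the one-sided bound $W(s,\mu)\le J(s,\mu;u)$. On each open interval $(t_i,t_{i+1})$ the path $\mu^u_t$ evolves continuously and $W$ is smooth, so Lemma~\ref{lem:chain-rule} (with no jump contribution on the open interval) gives $W(t_{i+1}^-,\mu^u_{t_{i+1}^-})-W(t_i,\mu^u_{t_i})=\int_{t_i}^{t_{i+1}}\bigl[\partial_t W+\ip{\mathcal{G}^\ast(\alpha_t)\mu^u_t}{\tfrac{\delta W}{\delta\mu}}\bigr]\dd t$. Since $W$ solves \eqref{eq:HJB-between-observations} and the Hamiltonian $\mathcal{H}$ is an infimum over admissible controls, the integrand is bounded below by $-\ell(t,\mu^u_t,\alpha_t)$, whence $W(t_{i+1}^-,\mu^u_{t_{i+1}^-})-W(t_i,\mu^u_{t_i})\ge-\int_{t_i}^{t_{i+1}}\ell\,\dd t$. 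To cross each observation time I would invoke \eqref{eq:HJB-observations}: because $\beta_i$ and $\mu^u_{t_i^-}$ are $\mathcal{F}_{t_i^-}$-measurable and the conditional law of $Y_i$ matches the expectation in \eqref{eq:HJB-observations}, taking $\E[\,\cdot\mid\mathcal{F}_{t_i^-}]$ yields $W(t_i^-,\mu^u_{t_i^-})\le h(t_i,\mu^u_{t_i^-},\beta_i)+\E[W(t_i,\mu^u_{t_i})\mid\mathcal{F}_{t_i^-}]$. Chaining the interval identities with the jump inequalities, telescoping from $s$ to $T$, using $W(T,\cdot)=g$ from \eqref{eq:HJB-terminal}, and taking full expectations collapses the estimate to $W(s,\mu)\le J(s,\mu;u)$. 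As $u$ was arbitrary, $W(s,\mu)\le\inf_u J(s,\mu;u)=V(s,\mu)$.

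For the reverse inequality and the optimality claim I would repeat the computation with the feedback control $u^\ast_t=\widehat{u}(t,\mu^{u^\ast}_t)$. By construction $\widehat{u}$ selects minimizers in both branches, so the Hamiltonian inequality on each $(t_i,t_{i+1})$ sharpens to the equality $\partial_t W+\ip{\mathcal{G}^\ast(\widehat\alpha)\mu^{u^\ast}_t}{\tfrac{\delta W}{\delta\mu}}=-\ell(t,\mu^{u^\ast}_t,\widehat\alpha)$, and the jump inequality becomes the equality in \eqref{eq:HJB-observations}. Every step of the previous paragraph then holds with equality, giving $W(s,\mu)=J(s,\mu;u^\ast)\ge V(s,\mu)$, which together with the first part yields $W=V$ and the optimality of $u^\ast$.

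The delicate points I would have to handle carefully are threefold. First, $W$ genuinely jumps in its time argument at each $t_i$ (that is the content of \eqref{eq:HJB-observations}), so one cannot invoke a single global It\^o formula: the chain rule must be run on the half-open intervals and glued through the jump condition, which is what the piecewise telescoping above accomplishes. Second, the measurability bookkeeping at the jumps requires the predictability of $\beta$ and the tower property in order to pass from \eqref{eq:HJB-observations} to an almost-sure inequality evaluated along the realized path. The main obstacle, however, is the third point: the well-posedness and admissibility of the closed-loop feedback $u^\ast$, namely that the argmin sets are nonempty and admit a measurable selection whose $\beta$-component is piecewise constant and predictable, and for which the controlled dynamics \eqref{eq:dynamics-controlled} indeed possesses a solution $\mu^{u^\ast}_t$. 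This existence-and-selection issue is precisely what is tacitly granted by assuming in the statement that the feedback map $\widehat{u}$ (as in \eqref{eq:optimal-control-from-value}) is well defined.
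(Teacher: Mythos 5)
Your proposal is correct and follows essentially the same route as the paper's own proof: pointwise inequalities from the HJB system \eqref{eq:HJB-between-observations}--\eqref{eq:HJB-terminal} evaluated along an arbitrary admissible path via the chain rule of Lemma~\ref{lem:chain-rule}, telescoped through the jump condition \eqref{eq:HJB-observations} to get $W(s,\mu)\le J(s,\mu;u)$, then equality for the feedback minimizer $\widehat{u}$. Your write-up is in fact more careful than the paper's (which compresses the telescoping and conditioning into a single line), and your closing remarks on the piecewise It\^{o} argument, the predictability of $\beta$ at the jumps, and the tacit measurable-selection/well-posedness assumption for the closed loop correctly identify what the paper's proof leaves implicit.
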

\begin{proof}
In view of \eqref{eq:HJB-between-observations}, \eqref{eq:HJB-observations} and \eqref{eq:HJB-terminal} it follows that, for each $a\in U$ and $(t,\mu)\in[0,T]\times \mathcal{P}(\R^d)$,
\begin{gather*}
  \frac{\partial W}{\partial t}(t,\mu)+\langle \mathcal{G}^*(a)\mu,\frac{\delta W}{\delta\mu}(s, \mu,\cdot)\rangle + \ell(s,\mu,a)\ge 0, \quad t_i\le t<t_{i+1},\,\, i=1,\ldots,n,\\
  W(t_i^-, \mu)= \inf_{(\alpha,\beta) \in U} \left\{ h(t_i, \mu, \beta) + E\left[ W(t_i, K_\beta(\cdot; \mu, Y^{\beta,\mu}_i) \mu) \right] \right\},\quad i=1, \ldots, n,
\end{gather*}
and $W(T,\mu)=g(\mu)$. Let us now replace $t,a, \mu$ by $s$ and $u_r=(\alpha_r,\beta_r), \mu_r^{u},\,s\le r\le T$. Upon conditioning on $\mu_s^u=\mu$, we get 
\[
W(s,\mu)\le \E\left[ \int_s^T \ell(t, \mu_t^u(s, \mu), u_t) \dd t + \sum_{i=\floor{s}+1}^n h(t_i, \mu^u_{t_i^-}(s, \mu), \beta_{t_i}) + g(\mu^u_T)\,\Big|\,\mu^u_s=\mu \right],
\]
which entails that 
\begin{equation*}
    W(s,\mu) = \inf_{u \in \mathcal{U}[s,T]} J(s, \mu; u), \quad 0 \le s <T, \quad \mu \in \mathcal{P}(\R^d).
  \end{equation*}
  
 Now, for $u^*=\widehat{u}$, the last inequality becomes an equality. Therefore, $W(t,\mu)=J(s, \mu; u^*)$ i.e., $u^*
$ is optimal.
\end{proof} 
 
\begin{remark}[Solving the HJB equation]
  \label{rem:solving-HJB}
  The HJB equation~\eqref{eq:HJB-equation} is a piecewise backward PDE.
  Given the solution $V(t_i^-, \cdot)$ at time $t_i$, $i=1, \ldots, n+1$ (interpreted as $V(t_{n+1}^-, \mu) = g(\mu)$ at $t_{n+1} = T$), we obtain the solution on $[t_{i-1}, t_i)$ by solving the PDE~\eqref{eq:HJB-between-observations} backward in time.
  Then, we define $V(t_{i-1}^-, \mu)$ by~\eqref{eq:HJB-observations}, and continue as before.
\end{remark}

\begin{remark}[Parameterized HJB equation]
  \label{rem:parameterized-HJB}
  Suppose that we are given a subset $\mathcal{M} \subset \mathcal{P}(\R^d)$, which is invariant under $\mathcal{G}^\ast(\alpha)$ as well as under the Bayesian update for any optimal control $u$, and that we start in $\mu_0 \in \mathcal{M}$.
  Let us further assume that probability measures in $\mathcal{M}$ are uniquely characterized by the expectations of functions $\Set{\varphi_j | j \in \mathcal{J}}$ indexed by a (finite or infinite) index set $\mathcal{J}$.

  In this case, we can recast the value function as a function $$V(t, \mu) = U\left(t, (\ip{\mu}{\varphi_j})_{j \in \mathcal{J}} \right),$$ 
  with $\mu \in \mathcal{M}$, for some function $U(t,z)$, $z \in \R^{\abs{\mathcal{J}}}$.
  Note that we thus have
  \begin{equation}
    \label{eq:change-of-variables-moments}
    \frac{\delta V}{\delta \mu}(t,\mu,\cdot) = \sum_{j\in\mathcal{J}} \frac{\partial U}{\partial z_j} \left(t, (\ip{\mu}{\varphi_j})_{j \in \mathcal{J}} \right) \varphi_j(\cdot).
  \end{equation}
  Hence, the HJB equation~\eqref{eq:HJB-equation} can be recasted into an HJB equation for $U(t,z)$.
\end{remark}

\begin{example}
  \label{ex:parameterized-HJB}[Parameterized HJB equation]
  For a general example of Remark~\ref{rem:parameterized-HJB}, consider a compact set $K \subset \R^d$ which is invariant under the controlled dynamics described by $\mathcal{G}^\alpha$. Let $\mathcal{M} \subset \mathcal{P}(\R^d)$ denote the set of probability measures supported by $K$, and further assume that the Bayesian update $K_\beta$ leaves $\mathcal{M}$ invariant.
  Hence, if we start in $\mu_0 \in \mathcal{M}$, we stay in $\mathcal{M}$ for any choice of control.

  Recall that any $\nu \in \mathcal{M}$ is characterized by its moments $\int \varphi_j(x) \nu(\dd x)$, $\varphi_j(x) \coloneqq x_1^{j_1} \cdots x_d^{j_d}$, $j \in \mathcal{J} = \mathbb{N}_0^d$. Hence, we are formally in the situation of Remark~\ref{rem:parameterized-HJB} and we can introduce moment coordinates in this space of measures, reducing the problem from a SOC formulation with probability measures valued states to the simpler case of a countable sequence valued state, that one may have to truncate for constructive approximation.
\end{example}

% \todo[inline,caption={}]{
%   CB: A generic example would be the following (I guess):
%   \begin{itemize}
%   \item $\mathcal{M} = $ probability measures supported on a compact set $K \subset \R^d$;
%   \item coefficients $b,\sigma$ of the SDE vanish outside $K$;
%   \item $\varphi_j(x) = x^j$  with $\mathcal{J} = \mathbb{N}_0^d$.
%   \end{itemize}
%   Should we mention this example?
% }

%%% Local Variables:
%%% mode: latex
%%% TeX-master: "bayesOptimalControl"
%%% End:

\section{Optimal control of an Ornstein -- Uhlenbeck process}
\label{sec:optimal-control-an}

% !TEX root = bayesOptimalControl.tex
Consider the linear--quadratic example based on a controlled Ornstein--Uhlenbeck process, with Gaussian noisy observations at times $t_i$, $i=1, \ldots, n$.
More precisely, we consider the controlled generator
\begin{equation}
  \label{eq:OU-generator}
  \mathcal{G}(\alpha) f(x) = (-\theta x + \alpha) \partial_x f(x) + \frac{1}{2} b^2 \partial_{xx} f(x),
\end{equation}
based on a control $\alpha \in \mathbb{R}$, implying that
\begin{equation*}
  \mathcal{G}(\alpha)^\ast p(x) = \theta p(x) + (\theta x-\alpha) \partial_x p(x) + \frac{1}{2} \partial_{xx} p(x).
\end{equation*}

We denote by $\mu^u_t$ the random evolution of conditional law of the unobserved controlled process $X^u$ with a control process $u_t = (\alpha_t,\beta_t)$.
 
We try to minimize quadratic costs, i.e.,
\begin{equation}\label{eq:quadratic-cost}
  \ell(t, \mu, \alpha) \coloneqq \int_{\mathbb{R}} x^2 \mu(\mathrm{d}x) + C \alpha^2, \quad g(\mu) \coloneqq \int_{\mathbb{R}} x^2 \mu(\mathrm{d}x),
\end{equation}
with all other cost terms vanishing.

At time $t_i$ we observe $Y^\beta_i \coloneqq \widehat{X}^u_i + \beta_i Z_i$ with $Z_i \sim \mathcal{N}(0,1)$ -- independent of each other and all other sources of randomness -- and $\widehat{X}^u_i \sim \mu^u_{t_i^-}$, see Example~\ref{ex:gaussian-noisy-observation}.
Hence, we update $\mu^u$ according to the specification
\begin{equation}\label{eq:bayes-update}
  \mu^u_{t_i}(\mathrm{d}y) = K_{\beta_i}(y; \mu^u_{t_i^-}, Y^\beta_i) \mu^u_{t_i^-}(\mathrm{d} y).
\end{equation}
For now, we assume that the noise level $\beta$ is fixed, and not a control parameter.
To make this clear notation-wise, we write $\varepsilon = \beta$.

Observe that the above updating rule preserves Gaussian random variables. Indeed, we have
\begin{lemma}\label{lem:gauss}
  Suppose that $\mu = \mathcal{N}(m, \sigma^2)$.
  Then $\nu$ defined by $\nu(\mathrm{d}x) \coloneqq K_\varepsilon(x; \mu, y) \mu(\mathrm{d}x)$ is equal to $\mathcal{N}\left( m + \frac{\sigma^2}{\sigma^2+\varepsilon^2} (y - m), \, \frac{\sigma^2\varepsilon^2}{\sigma^2+\varepsilon^2} \right)$.
\end{lemma}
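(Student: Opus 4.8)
The plan is to carry out the standard Gaussian conjugacy computation directly, using the explicit form of the kernel from Example~\ref{ex:gaussian-noisy-observation}. First I would write out the two densities involved: the prior density $\mu(\dd x) = \rho_\sigma(x-m)\,\dd x$, where $\rho_\sigma$ denotes the $\mathcal{N}(0,\sigma^2)$ density, and the likelihood $L_\varepsilon(x;y) = \rho_\varepsilon(y-x)$. By the definition of $K_\varepsilon$, the measure $\nu$ has density proportional to the product $\rho_\sigma(x-m)\,\rho_\varepsilon(y-x)$, with the constant of proportionality being precisely the normalizing denominator $\int_\R L_\varepsilon(x;y)\,\mu(\dd x)$ built into $K_\varepsilon$.

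The key step is to collect the exponents of the two Gaussian densities into a single quadratic in $x$. Up to additive terms not depending on $x$, the exponent of $\nu$ equals
\[
  -\half\left(\f{1}{\sigma^2}+\f{1}{\varepsilon^2}\right)x^2 + \left(\f{m}{\sigma^2}+\f{y}{\varepsilon^2}\right)x.
\]
Completing the square in $x$ then exhibits $\nu$ as proportional to $\exp\bigl(-(x-\tm)^2/(2\tS^2)\bigr)$, from which I read off the variance $\tS^2 = \bigl(\sigma^{-2}+\varepsilon^{-2}\bigr)^{-1} = \f{\sigma^2\varepsilon^2}{\sigma^2+\varepsilon^2}$ and the mean $\tm = \tS^2\bigl(m/\sigma^2 + y/\varepsilon^2\bigr) = \f{m\varepsilon^2 + y\sigma^2}{\sigma^2+\varepsilon^2}$. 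A one-line rearrangement, $\tm = m + \f{\sigma^2}{\sigma^2+\varepsilon^2}(y-m)$, matches the claimed mean in the statement.

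The final step is to observe that the normalization requires no separate computation: the kernel $K_\varepsilon$ is constructed so that $\int_\R K_\varepsilon(x;\mu,y)\,\mu(\dd x) = 1$, hence $\nu$ is a genuine probability measure whose density is a positive constant times a Gaussian shape with parameters $(\tm,\tS^2)$. Since the only probability measure with such a density is $\mathcal{N}(\tm,\tS^2)$ itself, the constant is forced to equal the Gaussian normalizing factor, and $\nu = \mathcal{N}(\tm,\tS^2)$ as claimed. There is no genuine obstacle here; the computation is routine conjugate-prior algebra, and the only point requiring a little care is the bookkeeping of the constant terms while completing the square, which I can safely absorb into the automatic normalization rather than tracking explicitly.
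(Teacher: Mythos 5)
Your proof is correct, but it takes a different route from the paper's. The paper's proof is a one-liner: it observes that $(\widehat{X}^u_i, Y^\varepsilon_i)$ are jointly Gaussian with $\operatorname{Cov}(\widehat{X}^u_i, Y^\varepsilon_i) = \sigma^2$ and $\operatorname{Var}(Y^\varepsilon_i) = \sigma^2 + \varepsilon^2$, and then invokes the standard formula for the conditional distribution $(X \mid Y = y)$ of a bivariate Gaussian pair, which directly yields the stated mean $m + \frac{\sigma^2}{\sigma^2+\varepsilon^2}(y-m)$ and variance $\sigma^2 - \frac{\sigma^4}{\sigma^2+\varepsilon^2} = \frac{\sigma^2\varepsilon^2}{\sigma^2+\varepsilon^2}$. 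You instead perform the conjugate-prior computation explicitly: multiplying the prior density $\rho_\sigma(x-m)$ by the likelihood $\rho_\varepsilon(y-x)$, completing the square, and letting the built-in normalization of $K_\varepsilon$ absorb the constants. Your algebra is correct throughout (the precision-weighted mean $\frac{m\varepsilon^2 + y\sigma^2}{\sigma^2+\varepsilon^2}$ rearranges as claimed), and your closing normalization argument is sound since $\int_\R K_\varepsilon(x;\mu,y)\,\mu(\dd x) = 1$ by construction. What your route buys is self-containedness: it verifies directly from the definition of the kernel in Example~\ref{ex:gaussian-noisy-observation} that the formally defined update produces this posterior, with no appeal to the interpretation of $\nu$ as a conditional law. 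What the paper's route buys is brevity and the conceptual identification that the Bayesian update \emph{is} conditioning in the jointly Gaussian model --- the fact that ultimately links the update rule to the Kalman filter in Section~\ref{sec:kalman-filters-with}, where the same formula reappears in matrix form via the Kalman gain.
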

\begin{proof}
  Use the formula for the conditional distribution of $(X|Y=y)$ for jointly Gaussian $(X,Y)$ and apply it with $X = \widehat{X}^u_i$ and $Y = Y^\varepsilon_i$.
\end{proof}

It is well--known, see \cite{guo_pham_wei23}, that the value function to the stochastic optimal control problem -- with no observation of the trajectories -- is given as
\begin{equation}
  \label{eq:OU-value-no-observation}
  V(t, \mu) = \zeta(t) \int_{\R} x^2 \mu(\dd x) + \eta(t) \left( \int_{\R} x \mu(\dd x) \right)^2 + \xi(t)
\end{equation}
in terms of the \emph{Riccati equations}
\begin{subequations}
  \label{eq:Riccati}
  \begin{gather}
    \label{eq:Riccati-a}
    \dot{\zeta}(t) - 2\theta \zeta(t) + 1 = 0,\\
    \label{eq:Riccati-b}
    \dot{\eta}(t) - \frac{C-1}{C^2}\left( \zeta(t) + \eta(t) \right)^2 - 2 \theta \eta(t) = 0,\\
    \label{eq:Riccati-c}
    \dot{\xi}(t) + b^2\zeta(t) = 0,
  \end{gather}
\end{subequations}
with terminal conditions $\zeta(T) = 1$, $\eta(T) = \xi(T) = 0$.

Now let us consider the case with noisy observations (in the sense of Example~\ref{ex:gaussian-noisy-observation}) at times $t_1, \ldots, t_n$.
It is easy to see that optimal trajectories of $\mu_t^u$ remain in the class of (random) normal distributions\footnote{I.e., normal distributions with possibly random mean or variance.}, provided that the initial distribution $\mu_0$ is within this class.
Nonetheless, the Bayesian update step~\eqref{eq:HJB-observations} destroys the form~\eqref{eq:OU-value-no-observation}.
\begin{lemma}
  \label{lem:OU-value-violation}
  Assume that $\mu = \mathcal{N}(m, \sigma^2)$ for some $m \in \R$, $\sigma^2>0$, and $V(t,\mu) = \alpha \int_{\R} x^2 \mu(\dd x) + \varepsilon \left( \int_{\R} x \mu(\dd x) \right)^2 + \gamma = \alpha (m^2 + \sigma^2) + \varepsilon m^2 + \gamma$. For $Y \sim \mathcal{N}(m, \sigma^2+\varepsilon^2)$, we have
  \begin{equation*}
    E\left[ V(t, K_\varepsilon(\cdot; \mu, Y) \mu) \right] = \alpha (m^2 + \sigma^2) + \varepsilon \left( m^2 + \frac{\sigma^4}{\sigma^2+\varepsilon^2} \right) + \gamma,
  \end{equation*}
  which is not of the form~\eqref{eq:OU-value-no-observation} for a Gaussian distribution unless $\varepsilon = 0$.
\end{lemma}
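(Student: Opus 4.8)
The plan is to reduce everything to the posterior mean and variance, exploiting that the Gaussian class is preserved under the update. First I would apply Lemma~\ref{lem:gauss} with the realized observation $Y$, which gives the posterior $\nu \coloneqq K_\varepsilon(\cdot;\mu,Y)\mu = \mathcal{N}(\tm,\tS^2)$ with random mean $\tm = m + \frac{\sigma^2}{\sigma^2+\varepsilon^2}(Y-m)$ and \emph{deterministic} variance $\tS^2 = \frac{\sigma^2\varepsilon^2}{\sigma^2+\varepsilon^2}$. Since $\nu$ is Gaussian I can read off $\int_\R x\,\nu(\dd x)=\tm$ and $\int_\R x^2\,\nu(\dd x)=\tm^2+\tS^2$, so that $V(t,\nu)=\alpha(\tm^2+\tS^2)+\varepsilon\,\tm^2+\gamma$. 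The only randomness left sits in $\tm$ (equivalently $\tm^2$), so computing $\E[V(t,\nu)]$ amounts to computing the first two moments of $\tm$.

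Next I would integrate over $Y\sim\mathcal{N}(m,\sigma^2+\varepsilon^2)$. As $\tm$ is affine in $Y$ I get $\E[\tm]=m$ and $\var(\tm)=\big(\frac{\sigma^2}{\sigma^2+\varepsilon^2}\big)^2(\sigma^2+\varepsilon^2)=\frac{\sigma^4}{\sigma^2+\varepsilon^2}$, hence $\E[\tm^2]=m^2+\frac{\sigma^4}{\sigma^2+\varepsilon^2}$. Substituting into $\E[V(t,\nu)]=\alpha\big(\E[\tm^2]+\tS^2\big)+\varepsilon\,\E[\tm^2]+\gamma$ and using the cancellation $\E[\tm^2]+\tS^2=m^2+\frac{\sigma^4+\sigma^2\varepsilon^2}{\sigma^2+\varepsilon^2}=m^2+\sigma^2$ produces exactly the asserted formula. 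This cancellation, whereby the variance reduction in $\tS^2$ compensates the extra variance picked up by $\E[\tm^2]$ to restore $m^2+\sigma^2$, is really the tower property $\E[\E[X^2\mid Y]]=\E[X^2]$ in disguise, and it serves as the consistency check for the $\alpha$-term.

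For the final assertion I would read both sides as functions of $(m,\sigma^2)$. In moment coordinates the form~\eqref{eq:OU-value-no-observation} restricted to Gaussians is $\zeta(m^2+\sigma^2)+\eta\,m^2+\xi$, which for fixed $m$ is \emph{affine} in $\sigma^2$. The computed expectation, however, carries the term $\varepsilon\,\frac{\sigma^4}{\sigma^2+\varepsilon^2}$, and the identity $\frac{\sigma^4}{\sigma^2+\varepsilon^2}=\sigma^2-\varepsilon^2+\frac{\varepsilon^4}{\sigma^2+\varepsilon^2}$ shows this is affine in $\sigma^2$ if and only if $\varepsilon=0$. Matching coefficients between the two expressions therefore forces $\varepsilon=0$, giving the claim. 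I do not expect any genuine obstacle here: the whole argument is an elementary Gaussian moment computation, and the only conceptual content is (i) spotting the exact cancellation that keeps the $\alpha$-coefficient intact and (ii) the non-representability argument of the last step, which is where the point of the lemma — that the Bayesian update destroys the ansatz~\eqref{eq:OU-value-no-observation} — actually resides.
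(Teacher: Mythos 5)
Your proof is correct and takes essentially the same approach as the paper's: apply Lemma~\ref{lem:gauss}, substitute the posterior mean $\tm$ and variance $\tS^2$ into $V$, and take expectations over $Y\sim\mathcal{N}(m,\sigma^2+\varepsilon^2)$, with the cancellation $\E[\tm^2]+\tS^2=m^2+\sigma^2$ appearing implicitly in the paper when expectations are taken. The only difference is that you also spell out the final non-representability clause (affinity in $\sigma^2$ forcing $\varepsilon=0$), which the paper's proof leaves implicit --- a useful addition, but not a different route.
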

\begin{proof}
  By Lemma~\ref{lem:gauss}, we know that
  \begin{equation*}
    \nu \coloneqq K_\varepsilon(\cdot; \mu, Y) = \mathcal{N}\left( m + \frac{\sigma^2}{\sigma^2+\varepsilon^2} (Y - m), \, \frac{\sigma^2\varepsilon^2}{\sigma^2+\varepsilon^2} \right).
  \end{equation*}
  Hence,
  \begin{multline*}
    V(t, \nu) = \alpha \left( \frac{\sigma^2\varepsilon^2}{\sigma^2+\varepsilon^2} + \left[ m + \frac{\sigma^2}{\sigma^2+\varepsilon^2} (Y - m) \right]^2 \right) \\+ \varepsilon \left( m + \frac{\sigma^2}{\sigma^2+\varepsilon^2} (Y - m) \right)^2 + \gamma,
  \end{multline*}
  and we conclude by taking expectations.
\end{proof}

Following the parametric approach suggested in Remark~\ref{rem:parameterized-HJB}, we note that normal distributions are uniquely determined by their first and second moments, or, more appropriately, by mean and variance.
In order to avoid confusion with the measure $\mu$ and when taking derivatives, we use the variables $m$ for the mean and $z$ for the \emph{variance} of our measures.
\begin{lemma}[HJB, controlled O-U case]
  \label{lem:parameterized-HJB-OU}
  Let $\widehat{U} = \widehat{U}(t, m, z)$ denote the value function $V(t,\mu)$ in terms of mean $m$ and variance $z$ of the Gaussian measure $\mu$. The function $\widehat{U}$ satisfies the HJB equation
  \begin{subequations}
    \label{eq:parameterized-HJB-OU}
    \begin{align}
      \label{eq:parameterized-HJB-OU-1}
      \frac{\pa \widehat{U}}{\pa t}(t, m, z) + z+m^2 - \theta m &\frac{\pa \widehat{U}}{\pa m} (t, m, z) + (b^2 - 2\theta z) \frac{\pa \widehat{U}}{\pa z}(t, m, z)\\
      \nonumber  &- \frac{1}{4C} \left[ \frac{\pa \widehat{U}}{\pa m}(t, m, z) \right]^2 = 0,\quad t_i \le t < t_{i+1},~i=1,...,n\\
      \label{eq:parameterized-HJB-OU-2}
      \widehat{U}(t_i^-, m, z) = \int_{\R} \widehat{U}\biggl( t_i, m+ &\frac{z}{\sqrt{z + \varepsilon^2}}z, \frac{z \varepsilon^2}{z + \varepsilon^2} \biggr) \phi(z) \dd z,\\
      \label{eq:parameterized-HJB-OU-3}
      \widehat{U}(T,m,z) = m^2 + z,
    \end{align}
  \end{subequations}
  where $\phi$ denotes the density of a standard normal.
\end{lemma}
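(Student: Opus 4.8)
The plan is to specialize the general HJB machinery of Theorem~\ref{thr:HJB-equation} to the Gaussian-parameterized setting of Remark~\ref{rem:parameterized-HJB}. Since the controlled Ornstein--Uhlenbeck flow between observations and, by Lemma~\ref{lem:gauss}, the Bayesian update both preserve the Gaussian class, every attainable conditional law is of the form $\mathcal{N}(m,z)$, so we may identify $V(t,\mu)$ with a function $\widehat{U}(t,m,z)$ of the mean $m = \ip{\mu}{x}$ and variance $z = \ip{\mu}{x^2} - \ip{\mu}{x}^2$. The bulk of the work is then to translate the flat-derivative pairing $\ip{\mathcal{G}^\ast(\alpha)\mu}{\frac{\delta V}{\delta\mu}}$ and the running cost into the $(m,z)$ variables, carry out the $\alpha$-minimization, and read off the terminal and jump conditions.

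First I would compute the flat derivative. Using the raw moment coordinates $M_1 = \ip{\mu}{x}$, $M_2 = \ip{\mu}{x^2}$, formula~\eqref{eq:change-of-variables-moments} gives $\frac{\delta V}{\delta\mu}(t,\mu,x) = \frac{\pa U}{\pa M_1}x + \frac{\pa U}{\pa M_2}x^2$; substituting the change of variables $m = M_1$, $z = M_2 - M_1^2$ yields
\[
  p(x) \coloneqq \frac{\delta V}{\delta\mu}(t,\mu,x) = \frac{\pa\widehat{U}}{\pa m}\, x + \frac{\pa\widehat{U}}{\pa z}\,(x^2 - 2mx).
\]
Writing $\widehat{U}_m \coloneqq \pa\widehat{U}/\pa m$ and $\widehat{U}_z \coloneqq \pa\widehat{U}/\pa z$, I would then exploit the duality $\ip{\mathcal{G}^\ast(\alpha)\mu}{p} = \ip{\mu}{\mathcal{G}(\alpha)p}$, which avoids the adjoint altogether and works directly with the forward generator~\eqref{eq:OU-generator}. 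Since $\pa_x p = \widehat{U}_m + 2\widehat{U}_z(x-m)$ and $\pa_{xx}p = 2\widehat{U}_z$ are affine in $x$, integrating $\mathcal{G}(\alpha)p$ against $\mathcal{N}(m,z)$ with the elementary identities $\ip{\mu}{x-m}=0$ and $\ip{\mu}{x(x-m)}=z$ collapses the pairing to $(-\theta m + \alpha)\widehat{U}_m + (b^2 - 2\theta z)\widehat{U}_z$, with no higher moments surviving.

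Next I would assemble the Hamiltonian. Adding the running cost $\ell(t,\mu,\alpha) = \ip{\mu}{x^2} + C\alpha^2 = m^2 + z + C\alpha^2$ and minimizing the $\alpha$-dependent part $\alpha\widehat{U}_m + C\alpha^2$ gives the minimizer $\alpha^\ast = -\widehat{U}_m/(2C)$ with minimal value $-\widehat{U}_m^2/(4C)$; substituting into~\eqref{eq:HJB-between-observations} produces exactly~\eqref{eq:parameterized-HJB-OU-1}. The terminal condition~\eqref{eq:parameterized-HJB-OU-3} is immediate from $g(\mu) = \ip{\mu}{x^2} = m^2 + z$. For the jump condition, I would start from~\eqref{eq:HJB-observations} with $h\equiv 0$ and $\beta = \varepsilon$ fixed, insert the post-update law from Lemma~\ref{lem:gauss}, and use that $Y = \widehat{X} + \varepsilon Z$ has law $\mathcal{N}(m, z+\varepsilon^2)$, so that writing $Y - m = \sqrt{z+\varepsilon^2}\,W$ with $W\sim\mathcal{N}(0,1)$ turns the updated mean into $m + \frac{z}{\sqrt{z+\varepsilon^2}}W$ and the updated variance into the \emph{deterministic} value $\frac{z\varepsilon^2}{z+\varepsilon^2}$; taking the expectation over the standard normal $W$ gives~\eqref{eq:parameterized-HJB-OU-2}.

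The main obstacle I anticipate is conceptual rather than computational: one must justify that the flat derivative may legitimately be evaluated through the moment coordinates even though $V$ is assessed only on the two-parameter Gaussian family, and that no moments beyond the first and second survive the pairing. The latter hinges on the affineness of $\pa_x p$ in $x$ together with the linear drift and constant diffusion of the Ornstein--Uhlenbeck generator, and on the closure property --- guaranteed by Lemma~\ref{lem:gauss} and the Gaussian-preserving Fokker--Planck flow --- that the controlled dynamics never leaves $\Set{\mathcal{N}(m,z)}$, so that restricting $V$ to this family loses no information needed for the backward HJB recursion of Remark~\ref{rem:solving-HJB}.
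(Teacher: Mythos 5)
Your proposal is correct and follows essentially the same route as the paper's proof: both invoke Remark~\ref{rem:parameterized-HJB} with the moment test functions $\varphi_1(x)=x$, $\varphi_2(x)=x^2$, evaluate $\ip{\mu}{\mathcal{G}(\alpha)\frac{\delta V}{\delta\mu}}$ via duality, minimize the quadratic in $\alpha$ to obtain the $-\widehat{U}_m^2/(4C)$ term, and derive the jump condition from Lemma~\ref{lem:gauss} together with $Y\sim\mathcal{N}(m,z+\varepsilon^2)$ and $h\equiv 0$. The only (cosmetic) difference is ordering --- you change variables from the second moment $s$ to the variance $z$ at the level of the flat derivative before computing the pairing, using the centered-moment identities, whereas the paper computes the Hamiltonian in $(m,s)$ coordinates first and substitutes $z = s - m^2$ afterwards; both yield the same cancellation $\frac{\pa U}{\pa m} + 2m\frac{\pa U}{\pa s} = \frac{\pa\widehat{U}}{\pa m}$.
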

Since the function $\widehat{U}$ is defined on $\mathbb{R}^+ \times\mathbb{R} \times \mathbb{R}^+$,  we naturally wonder if we need to impose a value of $\widehat{U}$ on the boundary $z=0$. Recall that equation (\ref{eq:parameterized-HJB-OU-1}) - (\ref{eq:parameterized-HJB-OU-3}) is a terminal value problem, which is solved backwards in time. Using the method of characteristics for first order nonlinear PDEs, it can be shown that for any point in the domain $\mathbb{R} \times\mathbb{R}^+$ at terminal time the information propagates backwards via the characteristic lines $z(\tau)$ and crosses the boundary $z=0$ from inside the domain to the outside, and no information is entering the domain from the $z<0$ (see Section \ref{sec:num-approach}). Therefore, we do not need to impose a boundary condition at $z=0$.
\begin{proof}[Proof of Lemma~\ref{lem:parameterized-HJB-OU}.]
Following Remark \ref{rem:parameterized-HJB},  since we are working with Gaussian distributions in one dimension, it is enough to choose $\varphi_1(x) \coloneqq x$, $\varphi_2(x) \coloneqq x^2$, as well as $U(t,m,s) \coloneqq V(t, \mu)$, for $m \coloneqq \int_{\R} \varphi_1(x) \mu(\dd x), \, s \coloneqq \int_{\R} \varphi_2(x) \mu(\dd x)$. By~\eqref{eq:change-of-variables-moments}, we have
  \begin{equation*}
    \frac{\delta V}{\delta \mu}(t, \mu) = \frac{\pa U}{\pa m}(t,m,s) \varphi_1 + \frac{\pa U}{\pa s}(t,m,s) \varphi_2.
  \end{equation*}
  Note that
  \begin{align*}
    \mathcal{H}\left(t, \mu, \frac{\delta V}{\delta \mu}(t,\mu)\right) &= \inf_{\alpha \in \R} \left\{ \ip{\mu}{\mathcal{G}(\alpha)\frac{\delta V}{\delta \mu}(t,\mu)} + \ip{\mu}{\varphi_2} + C\alpha^2 \right\},\\
                         &= \inf_{\alpha \in \R}\biggl\{ \int_{\R} \left[ \frac{\pa U}{\pa m}(t,m,s) (-\theta x + \alpha) + 2 \frac{\pa U}{\pa s}(t,m,s) x(-\theta x + \alpha) + b^2 \frac{\pa U}{\pa s}(t,m,s)  \right] \mu(\dd x) +\\
                         &\quad\quad +\int_{\R} x^2 \mu(\dd x) + C \alpha^2 \biggr\}\\
                         &= \inf_{\alpha \in \R} \biggl\{ C \alpha^2 + \left[ \int_{\R} \left( \frac{\pa U}{\pa m}(t,m,s) + 2 x \frac{\pa U}{\pa s}(t,m,s) \right) \mu(\dd x) \right] \alpha +\\
                         &\quad\quad + \int_{\R} \left[x^2 - \theta x \frac{\pa U}{\pa m}(t,m,s) + (b^2- 2\theta x^2) \frac{\pa U}{\pa s}(t,m,s)  \right] \mu(\dd x) \biggr\}\\
    &= \inf_{\alpha \in \R} \biggl\{ C \alpha^2 + \left( \frac{\pa U}{\pa m}(t,m,s) + 2 m \frac{\pa U}{\pa s}(t,m,s) \right) \alpha +\\
                         &\quad\quad + \left[ s - \theta m \frac{\pa U}{\pa m}(t,m,s)  + (b^2- 2\theta s) \frac{\pa U}{\pa s}(t,m,s) \right] \biggr\}\\
                         &=  s - \theta m \frac{\pa U}{\pa m}(t,m,s) + (b^2- 2\theta s) \frac{\pa U}{\pa s}(t,m,s) \\
    &\quad\quad- \f{1}{4C} \left( \frac{\pa U}{\pa m}(t,m,s) + 2 m \frac{\pa U}{\pa s}(t,m,s) \right)^2.
  \end{align*}
  Changing variables $s \to z = s - m^2$, we get
  \begin{equation*}
    \frac{\pa U}{\pa m}(t,m,s) = \frac{\pa \widehat{U}}{\pa m}(t,m,z) - 2 m \frac{\pa \widehat{U}}{\pa z}(t,m,z),
  \end{equation*}
  whereas derivatives w.r.t.~$s$ can just be replaced by derivatives w.r.t.~$z$. This change of variables implies that
  \begin{multline*}
    \mathcal{H}\left(t, \mu, \frac{\delta V}{\delta \mu}(t,\mu)\right) = m^2 + z - \theta m \frac{\pa \widehat{U}}{\pa m}(t,m,z) + (b^2- 2\theta z) \frac{\pa \widehat{U}}{\pa s}(t,m,z) \\
    - \f{1}{4C} \left( \frac{\pa \widehat{U}}{\pa m}(t,m,z) \right)^2,
  \end{multline*}
  from which we immediately get~\eqref{eq:parameterized-HJB-OU-1}.

  By Lemma~\ref{lem:gauss}, for $\mu = \mathcal{N}(m, z)$,
  \begin{equation*}
    V(t_i, K_\varepsilon( \cdot; \mu, y)) = U\left( t_i, m + \frac{z}{z + \varepsilon^2} (y-m), \frac{z \varepsilon^2}{z + \varepsilon^2} \right).
  \end{equation*}
  By assumption, $Y_i^\alpha = \widehat{X}^\alpha_i + \varepsilon Z_i \sim \mathcal{N}(m, z + \varepsilon^2)$, for $Z_i \sim \mathcal{N}(0,1)$, and provided that $\mu = \mathcal{N}(m, z)$. As $h \equiv 0$ and $\varepsilon$ is considered fix (i.e., not controlled), \eqref{eq:HJB-observations} implies~\eqref{eq:parameterized-HJB-OU-2}.
\end{proof}

% \todo[inline,caption={}]{
% \begin{itemize}
%   \item The multi-dimensional case can be treated the same way.
%   \item In the multidimensional case, our characterization of optimality with the interlaced HJB equation and Bayesian updates, yields an optimal policy for the application of the Kalman filter and it is thus a natural generalization of the Kalman filter to the optimal linear quadratic control case.
%   \item Add section on numerical results with Eliza. 
% \end{itemize}
% }

%%% Local Variables:
%%% mode: latex
%%% TeX-master: "bayesOptimalControl"
%%% End:

\section{Numerical approach}
\label{sec:num-approach}

% !TEX root = bayesOptimalControl.tex

We illustrate our method on three numerical examples: a classic linear–quadratic control problem, control of measurement noise, and control under a high-cost region constraint.

\subsection{Example 1: Linear-quadratic control under uncertain observation}\label{subsec:NA-LQ_control}
Recall from Lemma \ref{lem:parameterized-HJB-OU} the problem under consideration: find $\widehat{U} = \widehat{U}(t, m, z)$ which satisfies the HJB equation
\begin{subequations}
  \label{eq:parameterized-HJB-OU_num}
  \begin{align}
    \label{eq:parameterized-HJB-OU-1_num}
    \frac{\pa \widehat{U}}{\pa t}(t, m, z) + z + m^2 - \theta m &\frac{\pa \widehat{U}}{\pa m} (t, m, z) + (b^2 - 2\theta z) \frac{\pa \widehat{U}}{\pa z}(t, m, z)\\
    \nonumber  &- \frac{1}{4C} \left[ \frac{\pa \widehat{U}}{\pa m}(t, m, z) \right]^2 = 0,\quad t_n \le t < t_{n+1},\quad n=0,...,N\\
    \label{eq:parameterized-HJB-OU-2_num}
    \widehat{U}(t_n^-, m, z) = \int_{\R} \widehat{U}\biggl( t_n, m+ &\frac{z}{\sqrt{z + \varepsilon^2}}w, \frac{z \varepsilon^2}{z + \varepsilon^2} \biggr) \phi(w) \dd w,\\ 
    \label{eq:parameterized-HJB-OU-3_num}
    \widehat{U}(T,m,z) = m^2 + &z,
  \end{align}
\end{subequations}
where $\phi$ denotes the density of a standard normal.

In this section we present the numerical approach to solving this problem. Observe that this equation is defined on an unbounded domain. We show how we truncate the domain to be able to treat this problem numerically. We introduce the numerical scheme used to solve the HJB equation (\ref{eq:parameterized-HJB-OU-1_num}) on time interval $t \in [t_n,t_{n+1})$, and we show how to transform the result at time $t_i$ according to (\ref{eq:parameterized-HJB-OU-2_num}). Finally, we present the results obtained testing this approach on a synthetic problem. 

\subsubsection{Choice of the computational domain and boundary conditions}\label{subsubsec:NA-LQ_control_domain}

As mentioned, the original problem is defined on an unbounded domain with $m \in (-\infty,\infty)$ and $z \in [0,\infty)$. However, to solve the equation numerically, one must choose the truncated domain. Furthermore, one should either prescribe the boundary conditions at the boundaries of the chosen domain, or propose the numerical scheme, which uses only the interior points to compute current values and is shown to be stable. The upwind difference scheme has the desired property as long as information propagates from the initial conditions by lines flowing outside the domain. To check if this is the case, one can compute the characteristic lines of the equation following section 3.2 of \cite{evans@2010}. In our case, the characteristic lines of (\ref{eq:parameterized-HJB-OU-1_num}) are flowing from outside the domain, which is unwanted. However, if we restate the problem in a reversed time flow as follows
  \begin{subequations}
    \label{eq:parameterized-HJB-OU_num_rev_time}
    \begin{align}
      \label{eq:parameterized-HJB-OU-1_num_rev_time}
      \frac{\partial \widehat{U}}{\partial t}(t, m, z) + H\Big(m, z, \frac{\partial \widehat{U}}{\partial m},\frac{\partial \widehat{U}}{\pa z}\Big) = 0, \quad t_i \le t < t_{i+1},\\
      H\Big(m, z, \frac{\partial \widehat{U}}{\partial m},\frac{\partial \widehat{U}}{\pa z}\Big) := H_1\Big(m, z, \frac{\pa \widehat{U}}{\pa m}\Big) + H_2\Big(m, z, \frac{\partial \widehat{U}}{\pa z}\Big) \\
      H_1\Big(m, z,\frac{\pa \widehat{U}}{\pa m}\Big) := - m^2 + \theta m \frac{\pa \widehat{U}}{\pa m} (t, m, z) + \frac{1}{4C} \left[ \frac{\pa \widehat{U}}{\pa m}(t, m, z) \right]^2,\\
      H_2\Big(m, z,\frac{\pa \widehat{U}}{\pa z}\Big) := 
      - z  - (b^2 - 2\theta z) \frac{\pa \widehat{U}}{\pa z}(t, m, z), 
    \end{align}
  \end{subequations}
we can choose a domain, such that the characteristic lines cross the boundary from inside the domain. See, for example, Figure \ref{Char}, which shows the characteristic lines for $m \in [-1,1]$ and $z \in [0,1]$ for the synthetic problem with parameters given in subsection \ref{subsection:NA-LQ_control_simulations}. The lines for $m$ are symmetric around $m(0)=0$; for $z$ they are symmetric around $z = b^2/(2\theta)$. Therefore, as long as the computational domain contains $m=0$ and $z=0.5$, the numerical solution is stable. The details on the derivation of the characteristics are presented in Appendix \ref{sec:Details on characteristic}. %\textcolor{blue}{
Additionally, Saldi, Basar, and Raginsky \cite{SaldiBasarRaginsky2018} explored the asymptotic optimality of finite model approximations for partially observed Markov decision processes (POMDPs). Although relevant for a time discrete setting, their approximation results for POMDPs may provide theoretical support for our domain truncation numerical approach as well, connecting our work to established results in the approximation of partially observed control problems.
%}

\begin{figure}[h!]
	\centering
	\includegraphics[width=0.45\textwidth]{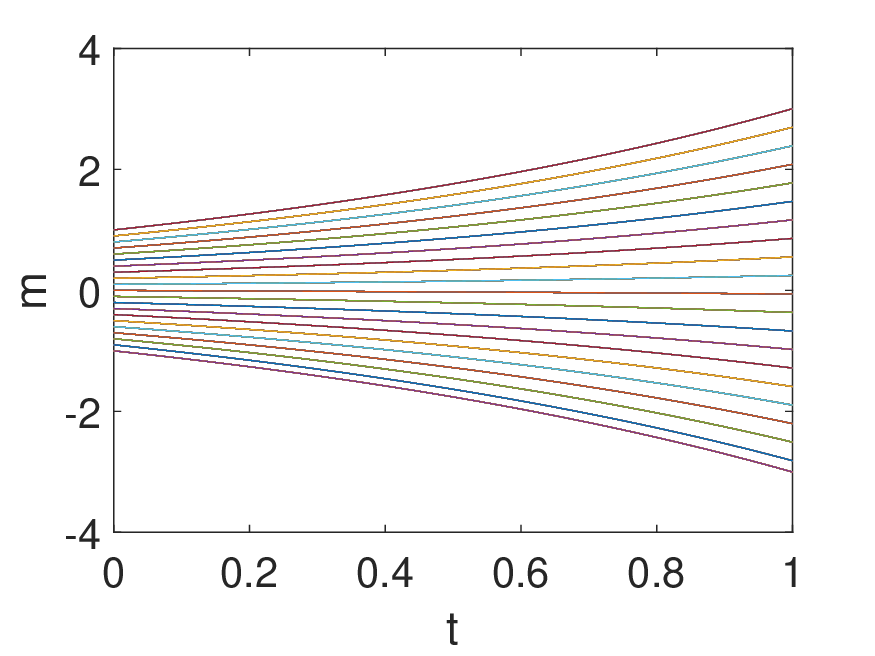}
    \includegraphics[width=0.45\textwidth]{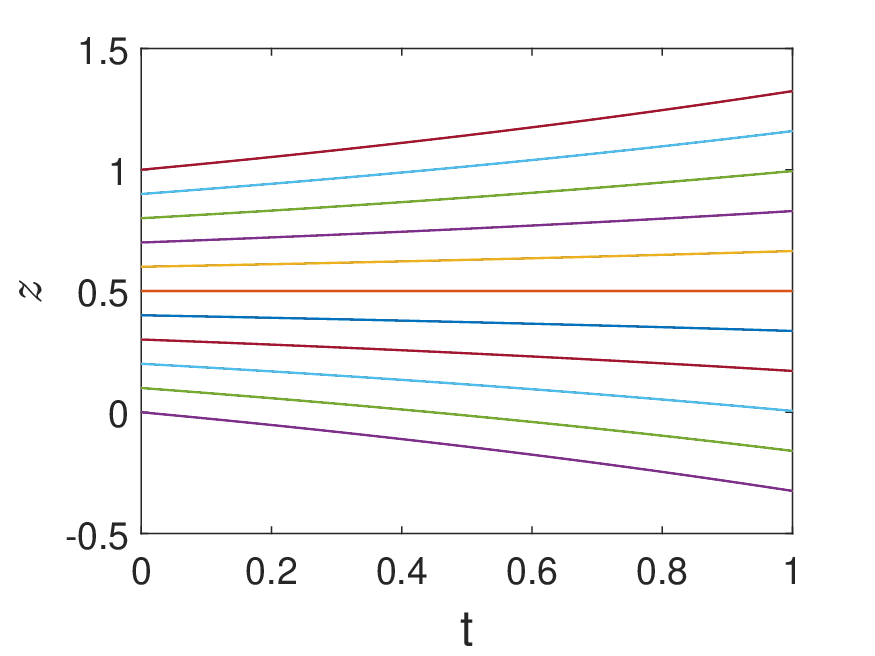}
	\caption{\label{Char} Characteristic lines shown for the domain of interest $m_i \in [-1,1]$, $z_j \in [0,1]$. For $m$, the lines are symmetric w.r.t. zero; for $z$ they are symmetric around $z = b^2/(2 \theta)=0.5$.}
\end{figure}

\subsubsection{Numerical scheme for HJB equation}\label{subsubsec:NA-LQ_control_num_scheme}

In order to construct a convergent scheme we will follow the guidelines outlined in section 3.1 of \cite{falcone@2016} and \cite{crandall@1984}. We introduce the grid
\begin{gather}
\mathcal{G} := \{(m_i,z_j,t_n): m_i = i\Delta m,~ z_j = j \Delta z,~ t_n = n\Delta t,\\
~~~\forall i \in [0,...,I], ~j \in [0,...,J], ~n \in [0,...,N]\}
\end{gather}
with uniform step sizes $\Delta m = \frac{m_I-m_0}{I}$, $\Delta z = \frac{z_J-z_0}{J}$, $\Delta t = \frac{t_N-t_0}{N}$. The boundary points $m_0$, $m_I$, $z_0$ and $z_J$ are chosen to ensure the stability of the scheme as described in the previous subsection. 

Let $U^{n}_{i,j} \approx \widehat{U}(t_n, m_i, z_j)$ denote the approximation of the solution to (\ref{eq:parameterized-HJB-OU-1_num_rev_time}). To approximate the spatial derivatives at time $t_n$ we will use left and right finite differences denoted by $D^{\pm}(U^n_{i,j})$ and defined as follows:
\begin{gather}
D^-_m(U^n_{i,j}) = \frac{U^n_{i,j}-U^n_{i-1,j}}{\Delta m},~~~~~D^+_m(U^n_{i,j}) = \frac{U^n_{i+1,j}-U^n_{i,j}}{\Delta m}, \nonumber \\
D^-_{z}(U^n_{i,j}) = \frac{U^n_{i,j}-U^n_{i,j-1}}{\Delta z},~~~~~D^+_{z}(U^n_{i,j}) = \frac{U^n_{i,j+1}-U^n_{i,j}}{\Delta z}.
\end{gather}

We aim to construct a scheme in a differenced form
\begin{align}
U^{n+1}_{i,j} = G(U^{n}_{i-1,j},U^{n}_{i,j},&U^{n}_{i+1,j},U^{n}_{i,j-1},U^{n}_{i,j+1}), \label{scheme}\\
G(U^{n}_{i-1,j},U^{n}_{i,j},U^{n}_{i+1,j},&U^{n}_{i,j-1},U^{n}_{i,j+1}) := \\ &U^n_{i,j} 
- \Delta t \mathcal{H}(m,D^-_m(U^n_{i,j}),D^+_m(U^n_{i,j});z,D^-_{z}(U^n_{i,j}),D^+_{z}(U^n_{i,j})),
\end{align}
where the function $\mathcal{H}$ is called the numerical Hamiltonian. Theorem 1 in \cite{crandall@1984} guarantees the convergence, provided that the scheme $G$ is monotone and consistent. 

\textit{Monotonicity}: The scheme is said to be monotone, if the function $G$ is monotone in each of its arguments as long as $|D^{\pm}_m(U^n_{i,j})|, |D^{\pm}_{z}(U^n_{i,j})| \leq L$. To achieve that, the information about the speed of propagation of the solution $\partial H/\partial m$, $\partial H / \partial z$ is used. For instance, 
$\frac{\partial H_1}{\partial (\partial \widehat{U}/\partial m)} \geq 0,~\text{if}~\frac{\partial \widehat{U}}{\partial m} \geq -2C \theta m$, and $\frac{\partial H_2}{\partial (\partial \widehat{U}/\partial z)} \geq 0,~\text{if}~b^2 - 2 \theta z \leq 0$, thus the numerical Hamiltonians can be defined as follows:
% \todo[inline]{CB: Please add dependence on $m, z$. Also, this is not how the ``cases'' environment works -- there is no need to manually add all those spaces, which will anyway break even after minor layout changes.}
\begin{align} \label{Ham_num_m}
&\mathcal{H}_1(m,z,D^-_m,D^+_m) = \\
&\begin{cases}
H_1(m,z,D^-_m),& \text{if}~D^-_m,D^+_m \geq -2 C \theta m, \\ 
H_1(m,z,D^+_m)+H_1(m,z,D^-_m)-H_1(m,z,-2C \theta m),& \text{if}~D^-_m \geq -2 C \theta m, ~D^+_m \leq -2 C \theta m\\
H_1(m,z,-2C \theta m),& \text{if}~D^-_m \leq -2C \theta m, ~D^+_m \geq -2 C\theta m\\
H_1(m,z,D^+_m),& \text{if}~D^-_m,D^+_m \leq -2 C\theta m
\end{cases} \nonumber
\end{align}
\begin{gather}\label{Ham_num_s}
\mathcal{H}_2(m,z,D^-_{z},D^+_{z}) = \begin{cases}
H_2(m,z,D^-_{z}),& \text{if}~b^2 - 2 \theta z < 0, \\
H_2(m,z,D^+_{z}),& \text{if}~b^2 - 2 \theta z \geq 0
\end{cases},\\
\mathcal{H}(m,D^-_m(U^n_{i,j}),D^+_m(U^n_{i,j});z,D^-_{z}(U^n_{i,j}),D^+_{z}(U^n_{i,j})) = \mathcal{H}_1(m,z,D^-_m,D^+_m) + \mathcal{H}_2(m,z,D^-_{z},D^+_{z}) \label{Ham_num}
\end{gather}

With Hamiltonian defined in (\ref{Ham_num}) the scheme $G$ is monotone if
\begin{gather}\label{monotone_condition}
1 - 2\frac{\Delta t}{\Delta m} \Big|\frac{\pa H_1}{\pa (\pa \hat{U}/\pa m)}(m,z,\alpha)\Big| - \frac{\Delta t}{\Delta z}\Big|\frac{\pa H_2}{\pa (\pa \hat{U}/\pa z)}(m,z,\gamma)\Big| \geq 0
\end{gather}
as long as $|\alpha|$, $|\gamma| \leq L$, which is an a priori bound on the derivatives of the value function. The derivation of (\ref{monotone_condition}) and the discussion of the bounds $L$ are given in Appendix \ref{sec:Derivation of the monotonicity condition}.

\textit{Consistency}. It is easy to check that the consistency of the proposed scheme is trivially satisfied
\begin{gather}
\mathcal{H}(m,\alpha, \alpha; z,\beta,\beta) = H_1(m,\alpha) + H_2(z,\beta)= H(m,z,\alpha,\beta).
\end{gather}

\subsubsection{Computation of $\hat{U}$ at time $t_n$.}
Once we obtained the value of $\widehat{U}(t_n,m,z)$ at time $t_n$, the observation is made and this information is used to correct the value function $\widehat{U}(t_n^-,m,z)$ according to (\ref{eq:parameterized-HJB-OU-2_num}):
\begin{gather}
      \widehat{U}(t_n^-, m, z) = \int_{\mathbb{R}} \widehat{U}\Big( t_n, m+ \frac{z}{\sqrt{z + \varepsilon^2}}w, \frac{z \varepsilon^2}{z + \varepsilon^2} \Big) \phi(w) dw,
\end{gather}
where $\phi(w) = \frac{1}{\sqrt{2\pi}}e^{-\frac{w^2}{2}}$. With a change of variable $-\frac{z}{\sqrt{z + \varepsilon^2}}w=\tau$ we can rewrite the integral as
\begin{gather} 
      \widehat{U}(t_n^-, m, z) = \int_{\mathbb{R}} \widehat{U}\Big( t_n, m - \tau, \frac{z \varepsilon^2}{z + \varepsilon^2} \Big) \hat{\phi}(\tau) d\tau, \label{eq:int_conv}
\end{gather}
where
\begin{gather} \label{int_trans}
      \hat{\phi}(\tau) = \frac{\sqrt{z  + \varepsilon^2}}{z}\frac{1}{\sqrt{2\pi}} e^{-\frac{\tau^2 (z + \varepsilon^2)}{2 z^2}}
\end{gather}
The integral (\ref{eq:int_conv}) is a convolution for each given $z$ and can be computed using Fourier transform.
Note that the points $(\cdot,\cdot,\frac{z\varepsilon^2}{z+\varepsilon^2})$ are off grid $\mathcal{G}$, therefore the value function at these points can be interpolated from the nearby points.

\subsubsection{Control under perfect observation}\label{subsubsec:NA-LQ_control_perfect_info}

We now assume that we are in the classical case, namely that the process $X(t)$ can be fully observed at all times with no noise.
%\textcolor{blue}{
Consider again the linear-quadratic example based on a controlled Ornstein-Uhlenbeck process described in Section \ref{sec:optimal-control-an}
\begin{gather}
	dX(t) = (-\theta X(t) + \alpha(t))dt + b dW(t),~t>0 \\
	X(0) = X_0
\end{gather}
with the cost function to minimize given by
\begin{gather}
	\mathbb{E} \Big[\int_0^T X(t)^2dt + C \int_0^T \alpha(t)^2dt\Big] \rightarrow \min
\end{gather}
%}

Define the value function
\begin{gather} \label{value_func}
	u(t,x) := \underset{\alpha}{\min}~\mathbb{E} \Big[\int_t^T X(t)^2dt + C \int_t^T \alpha(t)^2dt \Big| X(t) =x\Big]
\end{gather}
Then $u$ solves the HJB equation
\begin{gather}\label{eq: sim_HJB}
	\partial_tu + H(x,t,\partial_xu,\partial_x^2u) = 0,~t<T \\
	u(T,\cdot) = 0,
\end{gather}
with 
\begin{align}\label{eq: sim_Ham}
	H(x,t,\partial_xu,\partial_x^2u) &= \underset{\alpha}{\min} \{(-\theta x + \alpha)\partial_xu + b^2 \partial^2_xu/2 + x^2 + C \alpha^2\} \\
	&= (-\theta x -\partial_xu/(2C))\partial_xu + b^2 \partial^2_xu/2 + x^2 + \frac{(\partial_xu)^2}{4C}.
\end{align}

One can check that $u(t,x) = f(t)x^2 + g(t)$ solves the HJB equation (\ref{eq: sim_HJB})-(\ref{eq: sim_Ham}) if $f(t)$ solves the following Ricatti ODE
\begin{gather} \label{eq: sim_f}
	1 - 2\theta f - f^2/C + f' = 0,\\
	f(T) = 0,
\end{gather}
and $g(t) = \sigma^2 \int_t^T f(s)ds$.
Equation (\ref{eq: sim_f}) can be integrated numerically. Thus, computing $u(t,x)$ provides a lower bound on the value function for the case with noisy measurements.

\subsubsection{Simulations.}\label{subsection:NA-LQ_control_simulations}

In this section we present the numerical results obtained for a test problem with the following parameters:
\begin{gather}
	\theta = 0.25, b = 0.5, C = 1, \varepsilon = 0.9
\end{gather}
We truncated the domain with $m \in [-1,1]$, $z \in [0,1]$, and $T \in [0,1]$, as described in section \ref{subsubsec:NA-LQ_control_domain}. The step sizes $\Delta t = 0.0125$, $\Delta m = 0.1$, $\Delta z = 0.1$ were chosen to satisfy condition \ref{monotone_condition}. 

The tests were performed for three cases: 
\begin{itemize}
	\item Control under no observations
	\item Control under noisy observations (observations happen with time step $\Delta t_{obs}=20\Delta t$)
	\item Control under perfect observation (no noise, full observation in continuous time)
\end{itemize}

Figure \ref{Case1} (A) shows the projection of the value function onto axes of $m$ and $z$ at time $t=0$, in the case of no observations. Observe that the cost is smaller for smaller values of $z$, because small $z$ means more accurate information about the state. Figure \ref{Case1} (B) shows the slices of the value function onto axes of $t$ and $m$ with $z=1$; and (C) onto axes of $t$ and $z$ with $m=1$.

\begin{figure}[h!]
	\centering
	\begin{subfigure}{0.33\textwidth}
		\includegraphics[width=1\textwidth]{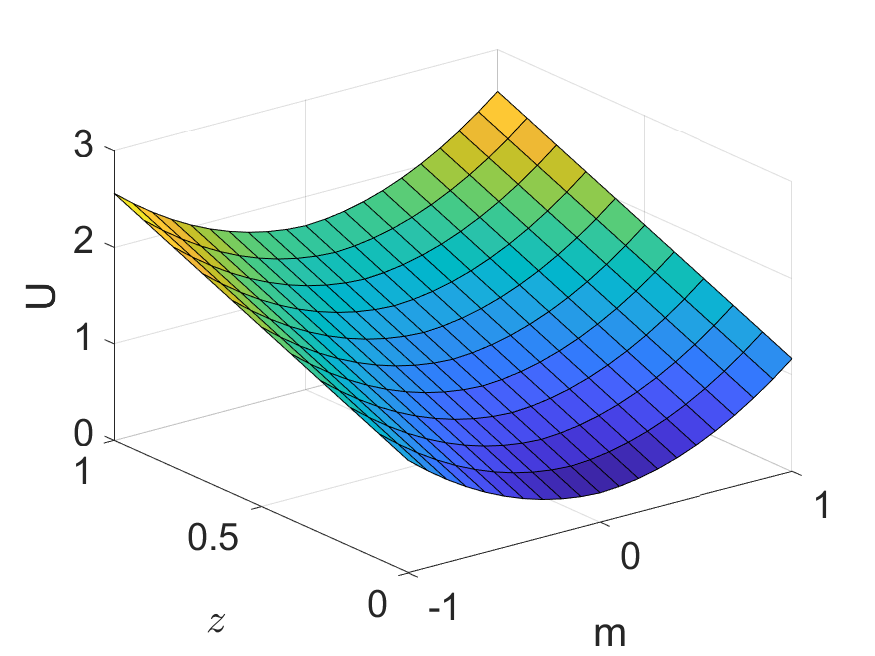}
		\caption{}
		\label{fig:NE_ValFunc_NoObs_a}
	\end{subfigure}
	\begin{subfigure}{0.32\textwidth}
		\includegraphics[width=1\textwidth]{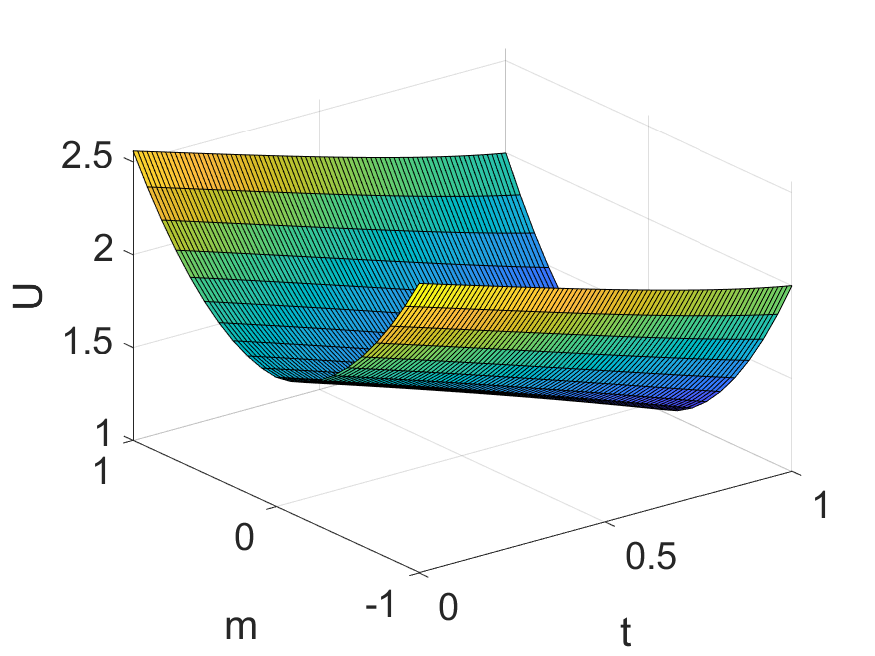}
		\caption{}
		\label{fig:NE_ValFunc_NoObs_b}
	\end{subfigure}
	\begin{subfigure}{0.33\textwidth}
		\includegraphics[width=1\textwidth]{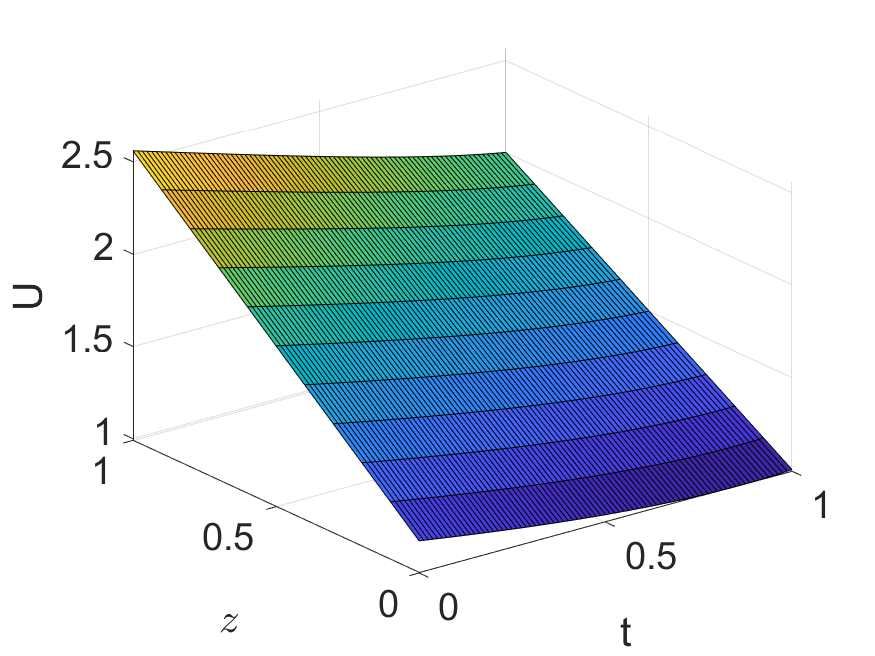}
		\caption{}
		\label{fig:NE_ValFunc_NoObs_c}
	\end{subfigure}
	\caption{\label{Case1} The value function of linear-quadratic control problem under no observations. (A) The value function at initial time; (B) the slice of value function w.r.t. $m$ and $t$, fixed $z = 1$; (C) the slice of value function w.r.t. $z$ and $t$, fixed $m = 1$.}
\end{figure}

Figure \ref{Case2} shows the same slices of the value function in the case with noisy observations made with time step $\Delta t_{obs}=20\Delta t$. Overall we see that the cost is smaller compared to the case with no observations (Figure \ref{Case1}). Furthermore, according to (\ref{eq:parameterized-HJB-OU-2_num}), at each observation point $t^-_n$ and all $z$ the value function is updated from the value function at points with significantly smaller variance $\frac{z\varepsilon^2}{z+\varepsilon^2}$ at time $t_n$, where the value function is smaller. That explains the jumps at each observation point which can be seen on Figure \ref{Case2} (B) and (C). The information gained due to observation improves our knowledge and facilitates better decision making.
\begin{figure}[h!]
	\centering
	\begin{subfigure}{0.33\textwidth}
		\includegraphics[width=1\textwidth]{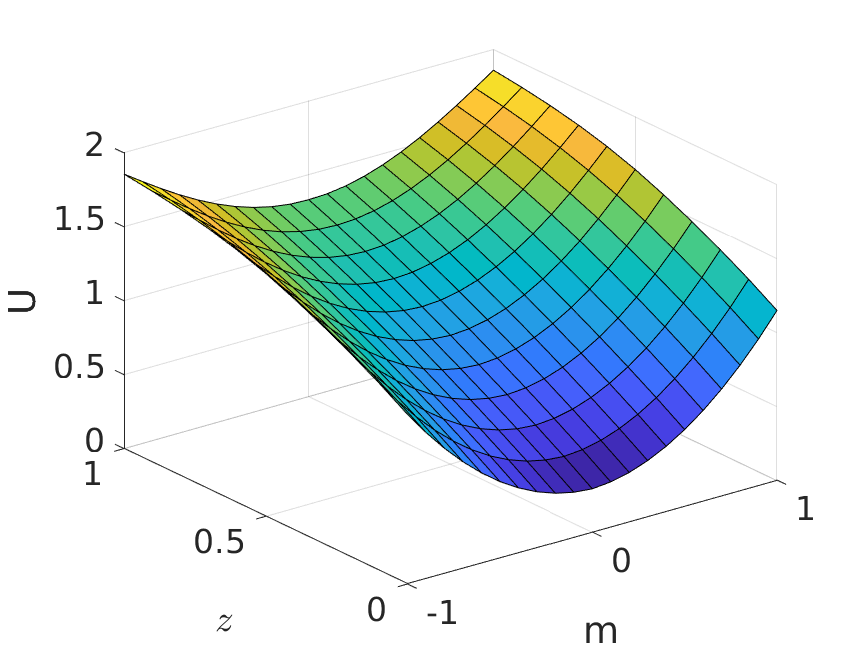}
		\caption{}
		\label{fig:NE_ValFunc_a}
	\end{subfigure}
	\begin{subfigure}{0.32\textwidth}
		\includegraphics[width=1\textwidth]{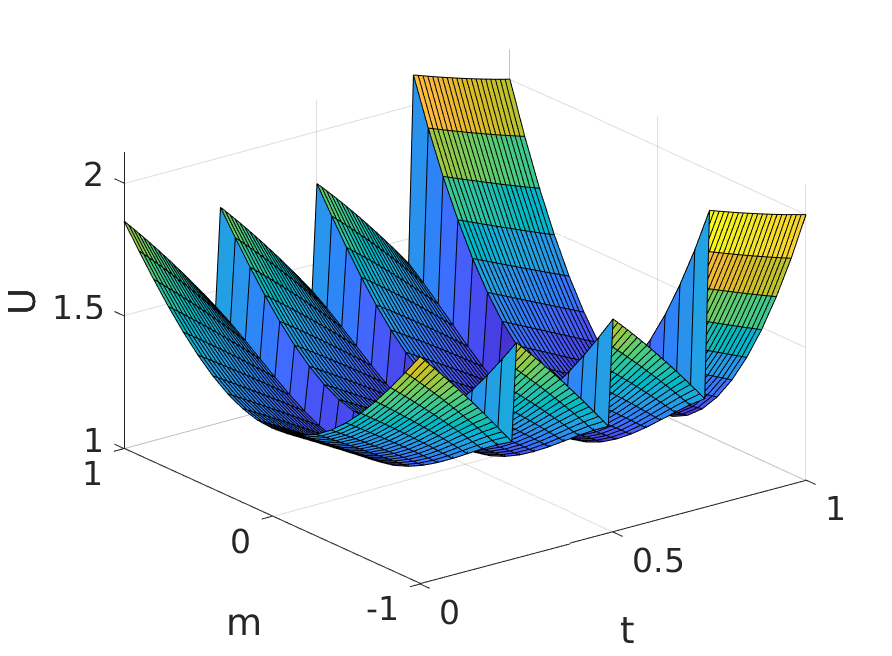}
		\caption{}
		\label{fig:NE_ValFunc_b}
	\end{subfigure}
	\begin{subfigure}{0.33\textwidth}
		\includegraphics[width=1\textwidth]{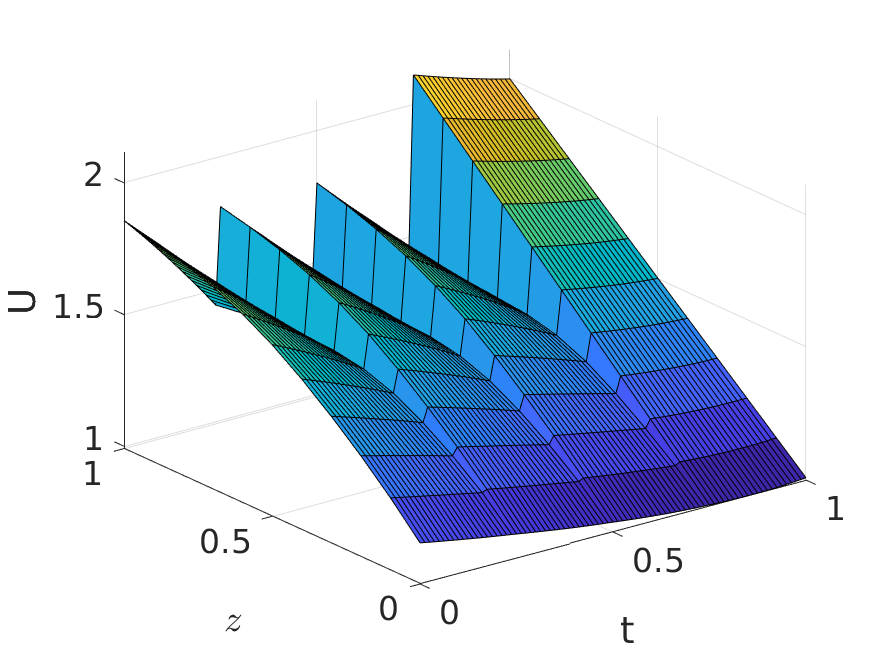}
		\caption{}
		\label{fig:NE_ValFunc_c}
	\end{subfigure}
	\caption{\label{Case2} The value function of linear-quadratic control problem under noisy observations. (A) The value function at initial time; (B) the slice of value function w.r.t. $m$ and $t$, fixed $z = 1$; (C) the slice of value function w.r.t. $z$ and $t$, fixed $m = 1$.}
\end{figure}

Figure \ref{Case2_OptPath} shows the optimal paths of the mean $m^*(t)$ and variance $z^*(t)$. The optimal paths are computed according to the dynamics (\ref{eq:optimal_path}) between the observation points and updated at the observation points using the simulated noisy observations of $X(t_{n+1})$, namely $Y(t_{n+1})~\sim~\mathcal{N}(m^*(t^-_{n+1}),z^*(t^-_{n+1})+\varepsilon^2)$ as follows:
\begin{align} 
	\begin{cases}
		dm^*(t)= \Big(-\theta m^*(t) + \frac{\partial \hat{U}}{\partial m}(t,m^*(t),z^*(t))\Big)dt,&t_n \leq t < t_{n+1},~n=0,...,N \\
		m^*(t_{n+1}) = m^*(t^-_{n+1}) + \frac{z^*(t^-_{n+1})}{z^*(t^-_{n+1}) + \varepsilon^2}(Y(t_{n+1})-m^*(t^-_{n+1})),&n=0,...,N \\
		m^*(t_0) = m_0,\\
		dz^*(t)= \Big(-2\theta z^*(t) + b^2\Big) dt,&t_n \leq t < t_{n+1},~n=0,...,N\\
		z^*(t_{n+1})= \frac{z^*(t^-_{n+1})\varepsilon^2}{z^*(t^-_{n+1}) + \varepsilon^2},&n=0,...,N \\
		z^*(t_0) = z_0
	\end{cases}\label{eq:optimal_path}
\end{align}
The measurements $Y(t_{n+1})$ are independent in the sense that $Y(t_{n+1}) = m^*(t^-_{n+1}) + \sqrt{z^*(t^-_{n+1}) + \varepsilon^2}~W_{n+1}$ with $\{W_{n+1}\}_{n \geq 0}$ i.i.d. $\sim \mathcal{N}(0,1)$. Note also that $m^*$ is updated using the obtained observation, while $z^*$ evolves deterministically. The probability density function, $\rho(x)$, over the optimal path becomes more peaked around the mean from one observation point to the next as we collect information through Bayesian updates, as seen in Figure \ref{Case2_OptPath}(C).

\begin{figure}[h!]
	\centering
	\begin{subfigure}{0.33\textwidth}
		\includegraphics[width=1\textwidth]{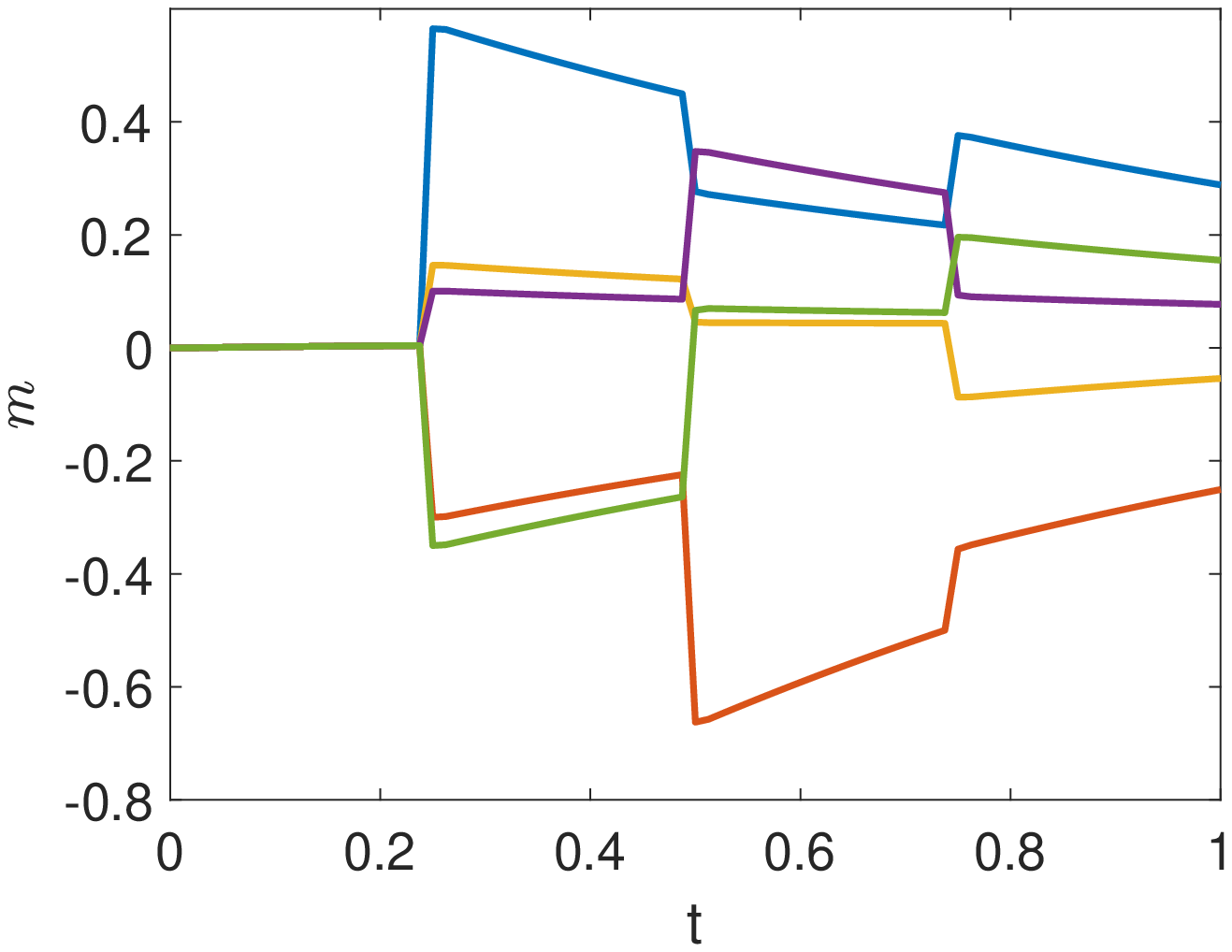}
		\caption{}
		\label{fig:NE_OptPath_a}
	\end{subfigure}
	\begin{subfigure}{0.32\textwidth}
		\includegraphics[width=1\textwidth]{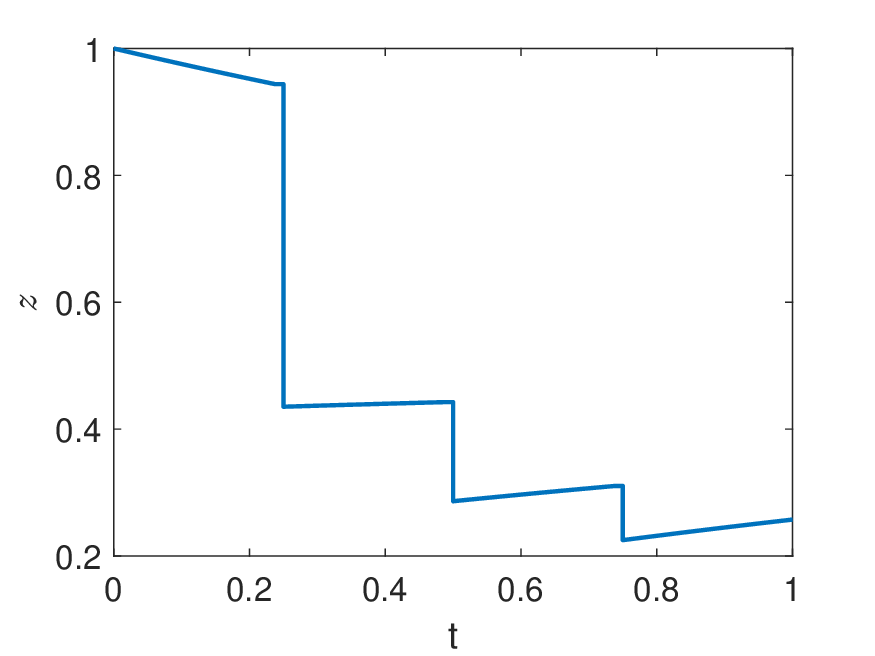}
		\caption{}
		\label{fig:NE_OptPath_b}
	\end{subfigure}
	\begin{subfigure}{0.33\textwidth}
		\includegraphics[width=1\textwidth]{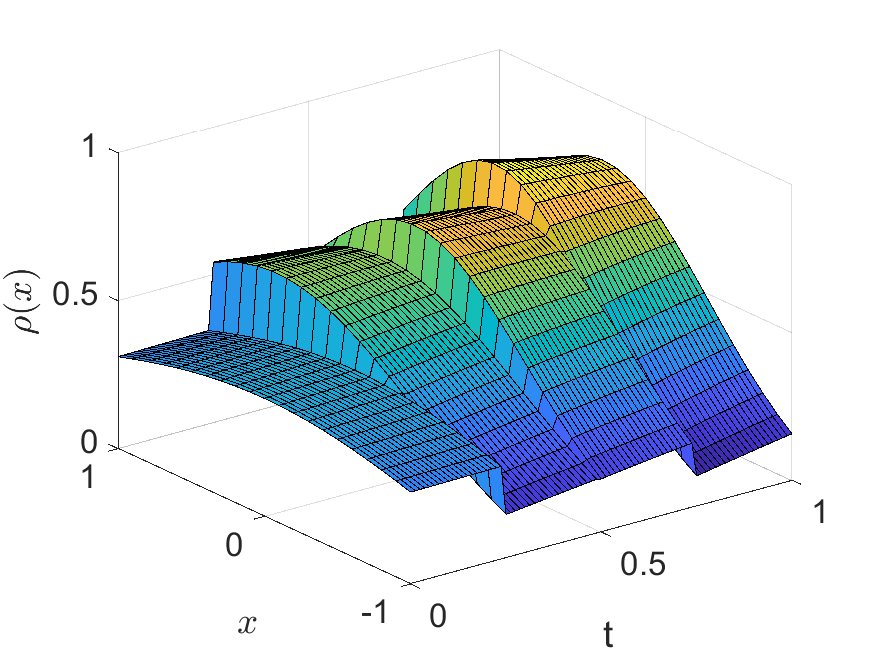}
		\caption{}
		\label{fig:NE_OptPath_c}
	\end{subfigure}
	\caption{\label{Case2_OptPath} The optimal paths of the mean $m^*(t)$ and variance $z^*(t)$ of linear-quadratic control under noisy observations problem. (A) The optimal paths of the mean $m^*(t)$ for several simulated scenarios; (B) the optimal path of the variance $z^*(t)$; (C) the probability density function, $\rho(x)$, over one of the optimal paths (the blue line on plot (A)).}
\end{figure}

Figure \ref{Case3} illustrates the value function's slices w.r.t. $m$ and $t$ with fixed $z = 1$ for the three scenarios: (A) - perfect observations; (B) - noisy observations; (C) - no observations. Evidently, the case with perfect observations provides a lower bound for the solution in the presence of noisy observations, while the scenario with no observations gives the upper bound for the solution.

\begin{figure}[h!]
	\centering
	\begin{subfigure}{0.32\textwidth}
		\includegraphics[width=1\textwidth]{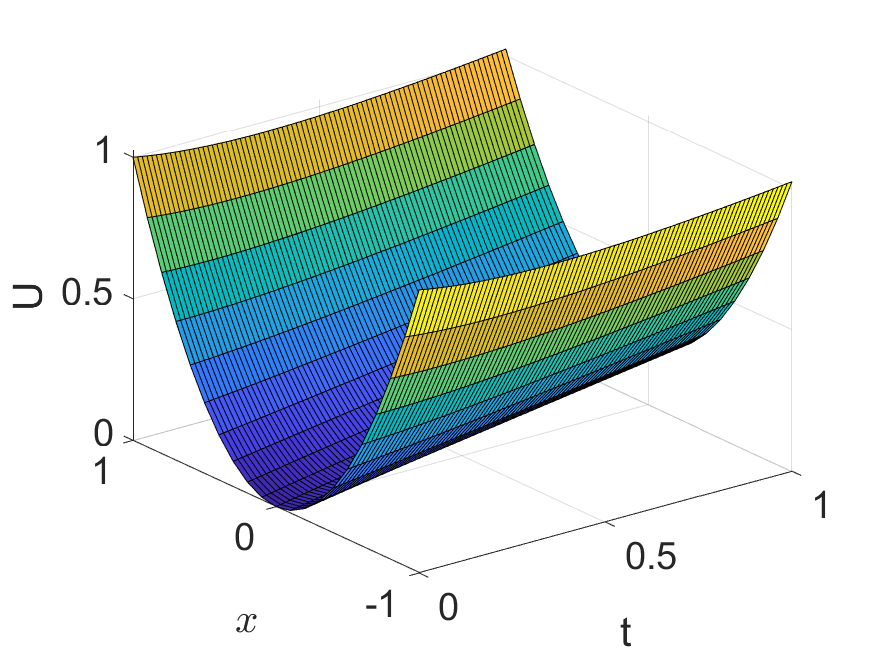}
		\caption{}
		\label{fig:NE_ValFunc_NoObs_a}
	\end{subfigure}
	\begin{subfigure}{0.32\textwidth}
		\includegraphics[width=1\textwidth]{pictures/U_t_m}
		\caption{}
		\label{fig:NE_ValFunc_FullInfo}
	\end{subfigure}
	\begin{subfigure}{0.32\textwidth}
		\includegraphics[width=1\textwidth]{pictures/U_m_t_no_obs_time_rev}
		\caption{}
		\label{fig:NE_ValFunc_NoObs_b}
	\end{subfigure}
	\caption{\label{Case3} The comparison of the value functions at time $t=0$ in three cases. (A) The value function of of linear-quadratic problem with full information; this case corresponds to the variance $z=0$ in cases B and C with partially available information, and $x$ is the exact known state which we compare to the mean $m$ in B and C; (B) the slice of the value function of linear-quadratic problem with observations w.r.t. $m$ and $t$, fixed $z = 1$; (C) the slice of the value function of linear-quadratic problem with no observations w.r.t. $m$ and $t$, fixed $z = 1$.}
\end{figure}

\subsection{Example 2: Optimal control of measurement noise}\label{subsec:NA-measurement_noise_control}

In this example, we extend our formulation to additionally control the noise of the observations $\pmb{\beta}$.
 
\subsubsection{Experiment set up}. Rather than fixing the measurement noise at $\varepsilon$, we allow each of $N$ observations 
\begin{gather}
Y(t_n) = X(t_n)+\beta_nZ_n,\quad\quad Z_n \sim \mathcal{N}(0,1),
\quad n=1,\dots,N,
\end{gather}
to use an adaptive noise level $\beta_n\in(0,\beta_{\max}]$.  Reducing $\beta_n$ improves accuracy but incurs an experimental cost $\frac{c_n}{\beta_n}$.

As discussed in Section~\ref{sec:optimal-control-an}, this creates a nontrivial trade-off between estimation error and measurement expense. The HJB associated with such value function is now given by
\begin{subequations}
	\label{eq:parameterized-HJB-OU_num_beta}
	\begin{align}
		\label{eq:parameterized-HJB-OU-1_num_beta}
		\frac{\partial \widehat{U}}{\partial t}(t, m, z) &+ z + m^2 - \theta m \frac{\partial \widehat{U}}{\partial m} (t, m, z) + (b^2 - 2\theta z) \frac{\partial \widehat{U}}{\partial z}(t, m, z) \\ \nonumber	 
		&- \frac{1}{4C} \left[ \frac{\partial \widehat{U}}{\partial m}(t, m, z) \right]^2 = 0,\quad t_n \le t < t_{n+1},\quad n=0,\dots,N\\
		\label{eq:parameterized-HJB-OU-2_num_beta}
		\widehat{U}(t_n^-, m, z) &= \underset{\beta_n \in (0,\beta_{\max}]}{\min}\int_{\mathbb{R}} \widehat{U}\biggl( t_n, m+ \frac{z}{\sqrt{z + \beta_n^2}}w, \frac{z \beta_n^2}{z + \beta_n^2} \biggr) \phi(w) d w + \frac{c_n}{\beta_n},\\ 
		\widehat{U}(T,m,z) &= m^2 + z,
	\end{align}
\end{subequations}
where at each observation step $n$ we optimize components of $\pmb{\beta} = \{\beta_1,...,\beta_N\}$ to achieve the smallest cost.

To quantify the benefit of noise control, we compare against the fixed-noise case $\beta_n \equiv \varepsilon$.  To compute the value function in a fixed-noise case, we replaces the minimization in \eqref{eq:parameterized-HJB-OU-2_num_beta} by the single update
\begin{gather}
	\widehat{U}(t_n^-, m, z) = \int_{\mathbb{R}} \widehat{U}\biggl( t_n, m+ \frac{z}{\sqrt{z + \varepsilon^2}}w, \frac{z \varepsilon^2}{z + \varepsilon^2} \biggr) \phi(w) dw + \frac{c_n}{\varepsilon},
\end{gather}

%-----------------------------------------------------------
\subsubsection{Simulation and results}. We consider a simulation over $t \in [0,1]$ with $N=3$ measurements at
\begin{gather}
t_{\rm obs} = [0.25, 0.5, 0.75],
\quad
c_n = [0.05, 0.01, 0.001].
\end{gather}
In principle, the measurement noise level $\beta_n$ is unbounded, since it is modeled as Gaussian. However, in practice, real‐world measurement systems have finite precision. To reflect this, we introduce an effective upper bound $\beta_{\max}$ based on the natural variability of the OU process. Over the short time interval $T-t_0$, the conditional variance of the OU process grows approximately as $z_0^2(T-t_0)$. Applying the three‐sigma rule which captures about 99.7\% of Gaussian outcomes, we set
\begin{gather}
	\beta_{\max} = 3\sqrt{\mathbb{V}[X_T - X_{t_0}]} \approx 3z_0\sqrt{T - t_0},
\end{gather}
so that the maximum measurement noise remains on the same high‐probability scale as the process noise. For the numerical experiments, we truncate the computational domain to $m \in [-7,7]$, $z \in [0,1]$, ensuring that it comfortably covers this range of high‐probability fluctuations. Finally, we choose $\Delta t = 0.0125$, $\Delta m = 0.1$, $\Delta z = 0.1$ to ensure the monotonicity condition (Condition~\ref{monotone_condition}) is satisfied.
%--------------------------

Figure \ref{fig:NE_ValFuncBeta} shows slices of the value function for the case with controlled noise $\pmb{\beta}$. In Figure \ref{fig:NE_CostsBeta} we compare the expected cost‐to‐go: the controlled noise strategy always outperforms any fixed‐noise baseline $\varepsilon$. Figure \ref{fig:NE_OptPathsBeta} shows the typical optimal paths of the mean $m^*(t)$, the variance $z^*(t)$, and controls $\alpha^*(t)$ and $\pmb{\beta}^*$. The average optimal noise levels are given by $\pmb{\beta}^*=[0.49, 0.52, 0.75]$ (estimated over $10^4$ simulations).
Figure \ref{fig:NE_BetaFunction} shows the optimal experiment noises $\beta_1^*$, $\beta_2^*$ and $\beta_3^*$ as functions of $m$ and $z$. In the high uncertainty region (large $z$) of Figure \ref{fig:NE_BetaFunction_t1}, $\beta_1^*$ is driven to its minimum — intuitively, when uncertainty is large we invest in a very precise (low-noise) measurement. As we start the simulation with large initial variance $z_0=1$, we observe that a precise measurement with noise $\beta_1^*=0.49$ is chosen at first observation time $t_1=0.25$. This reduces the variance $z^*(t)$ significantly, and by the time we reach the third observation at $t_3=0.75$, $\beta_3^*$ rises, reflecting the decreasing value of further measurement noise reduction.  

\begin{figure}[h!]
	\centering
	\begin{subfigure}{0.32\textwidth}
		\includegraphics[width=1\textwidth]{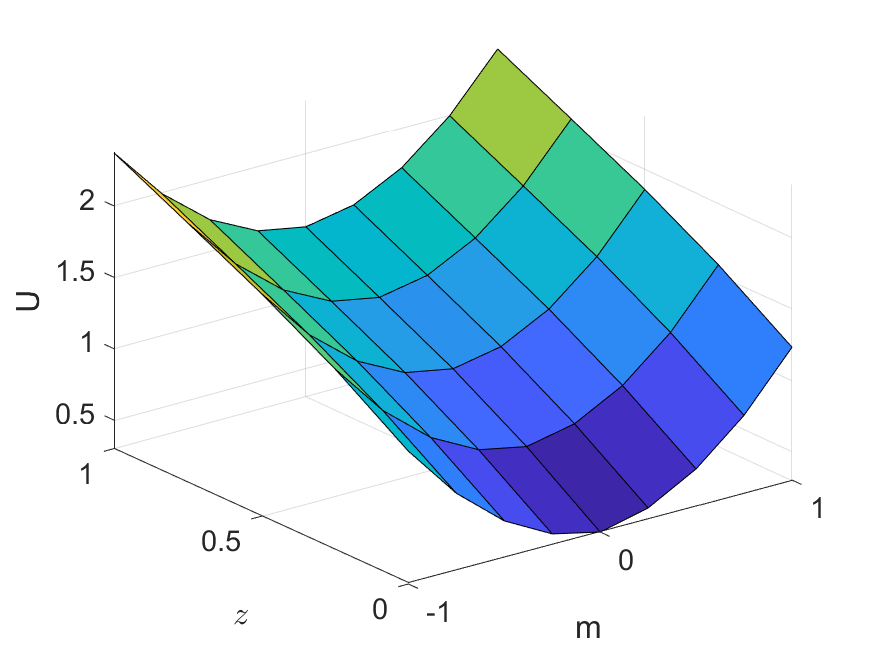}
		\caption{}
		\label{fig:NE_ValFuncBeta_ut0}
	\end{subfigure}
	\begin{subfigure}{0.32\textwidth}
		\includegraphics[width=1\textwidth]{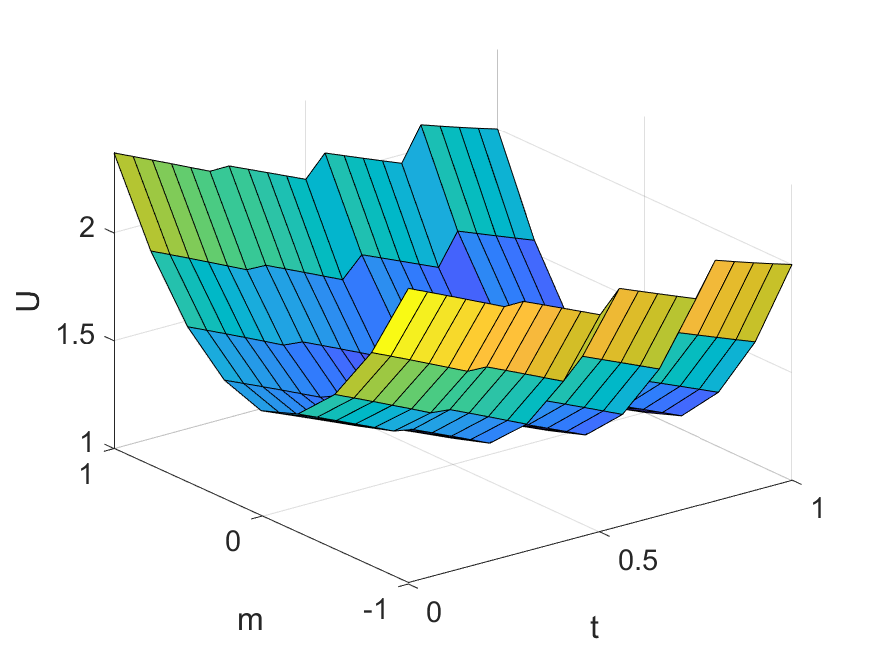}
		\caption{}
		\label{fig:NE_ValFuncBeta_utm}
	\end{subfigure}
	\begin{subfigure}{0.32\textwidth}
		\includegraphics[width=1\textwidth]{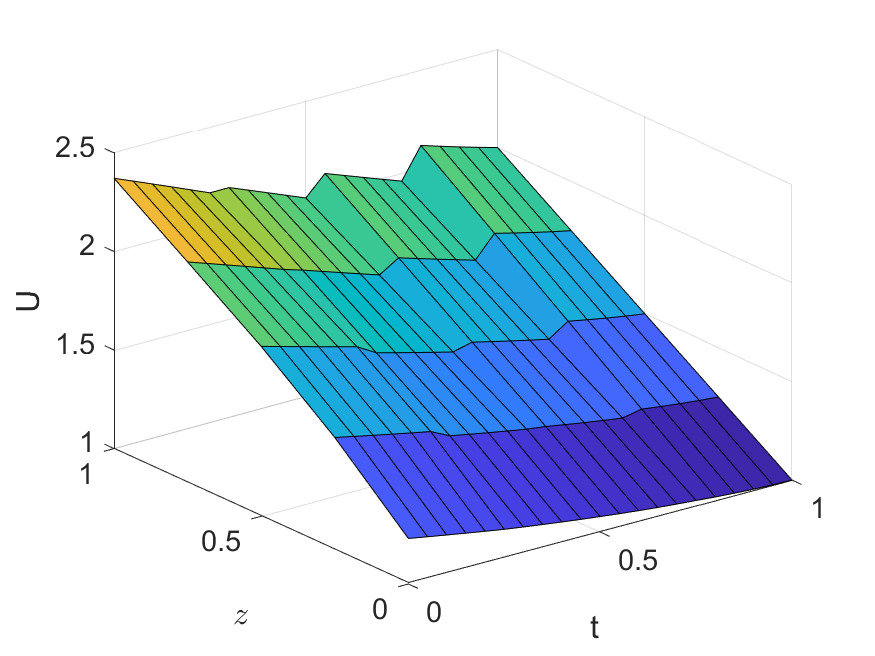}
		\caption{}
		\label{fig:NE_ValFuncBeta_utz}
	\end{subfigure}
	\caption{\label{fig:NE_ValFuncBeta} The slices of the value function for optimal control of measurement noise $\pmb{\beta}$ problem. (A) $(m,z)$-slice at time $t_0=0$; (B)  $(t,m)$-slice at fixed $z=1$; (C) $(t,z)$-slice at fixed $m=1$.} 
\end{figure}
\begin{figure}[h!]
	\centering
	\includegraphics[width=0.5\textwidth]{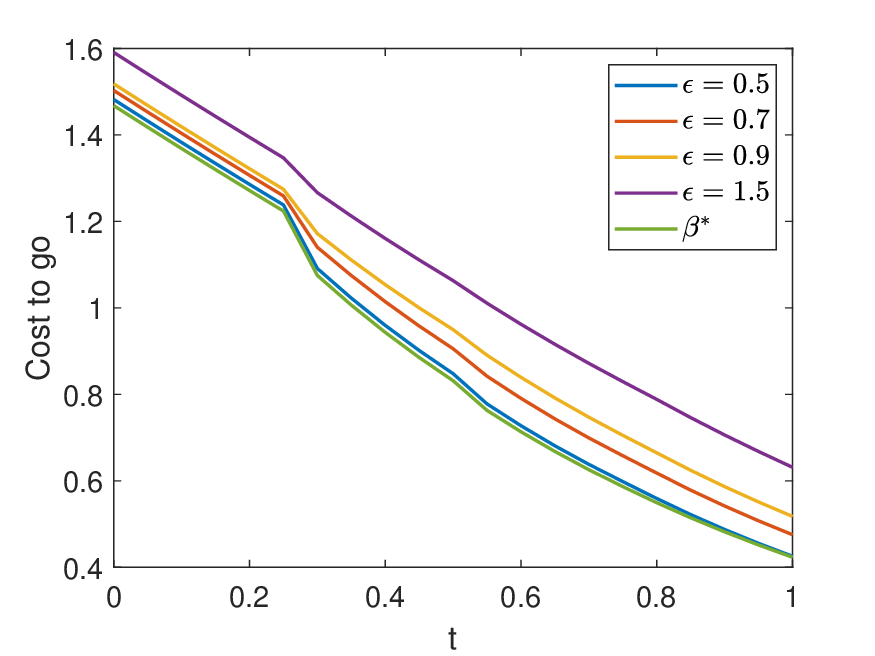}
	\caption{\label{fig:NE_CostsBeta} Comparison of the expected cost to go for cases with controlled noise level $\pmb{\beta}^*$ and fixed noises $\varepsilon$ (averaged over $10^4$ simulations).}
\end{figure}
\begin{figure}[h!]
	\centering
	\begin{subfigure}{0.48\textwidth}
		\includegraphics[width=1\textwidth]{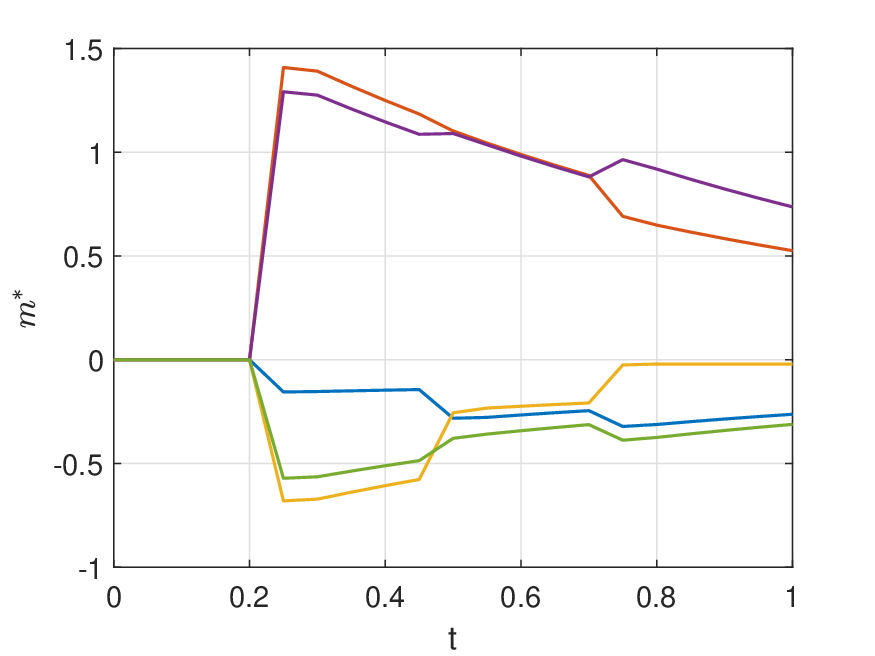}
		\caption{}
		\label{fig:NE_OptPathsBeta_m}
	\end{subfigure}
	\begin{subfigure}{0.48\textwidth}
		\includegraphics[width=1\textwidth]{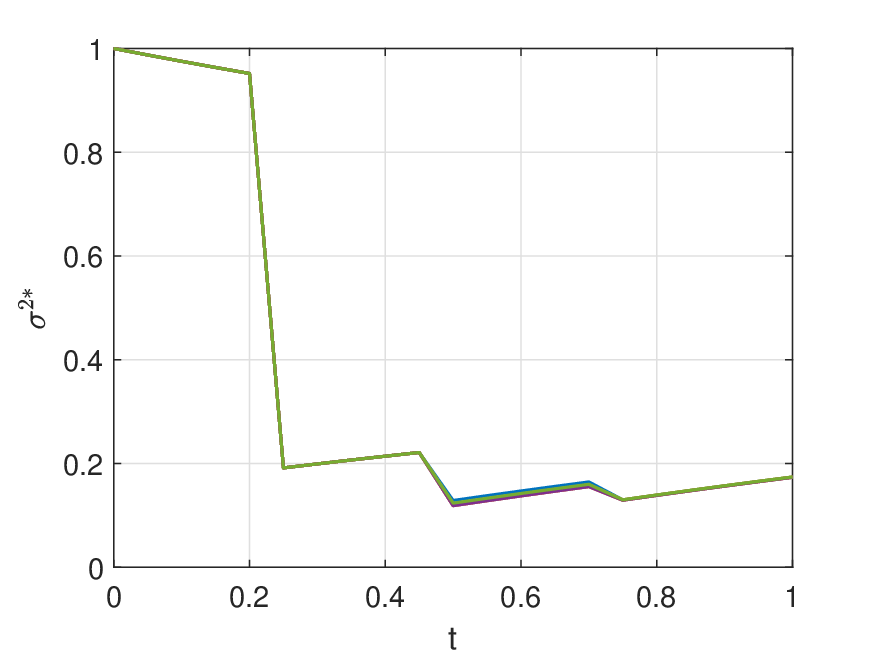}
		\caption{}
		\label{fig:NE_OptPathsBeta_z}
	\end{subfigure}
	\begin{subfigure}{0.48\textwidth}
		\includegraphics[width=1\textwidth]{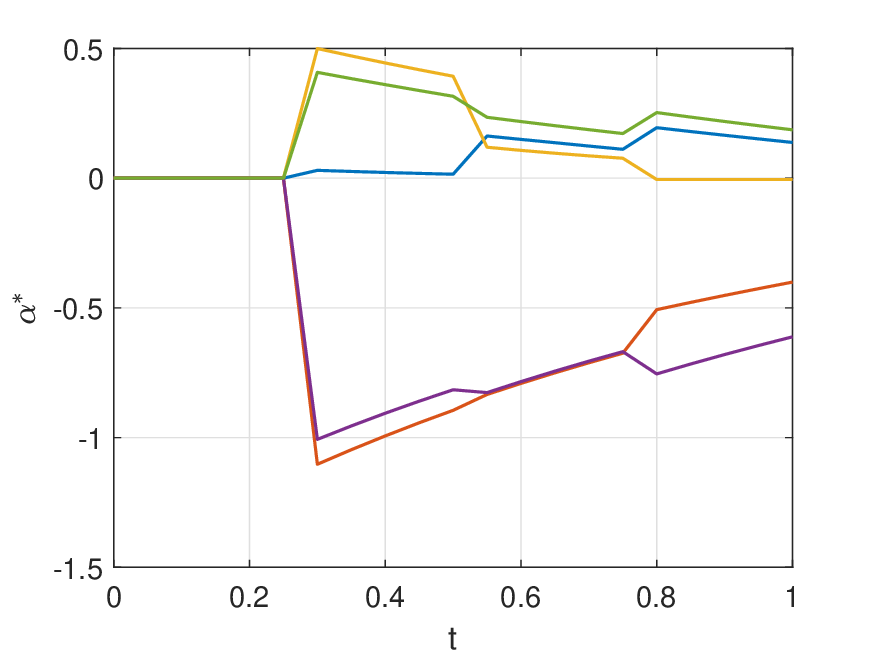}
		\caption{}
		\label{fig:NE_OptPathsBeta_alpha}
	\end{subfigure}
	\begin{subfigure}{0.48\textwidth}
		\includegraphics[width=1\textwidth]{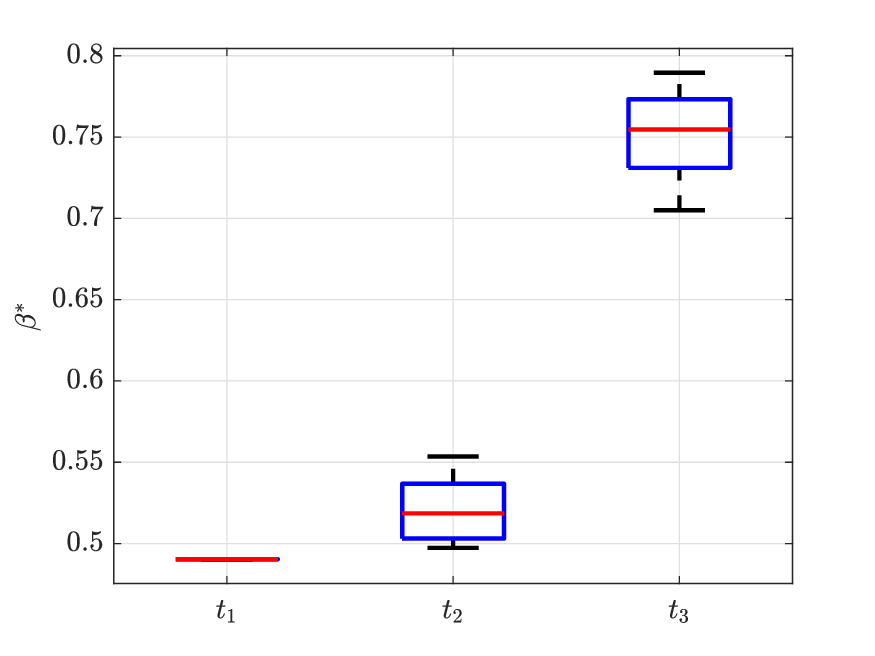}
		\caption{}
		\label{fig:NE_OptPathsBeta_beta}
	\end{subfigure}
	\caption{\label{fig:NE_OptPathsBeta} (A) The optimal path of the mean $m^*$; (B) the optimal path of the variance $z^*$; (C) control $\alpha^*$ over the optimal path; (D) the optimal noise levels $\beta_n^*$ at each observation step for some realizations of the process.}
\end{figure}
\begin{figure}[h!]
	\centering
	\begin{subfigure}{0.32\textwidth}
		\includegraphics[width=1\textwidth]{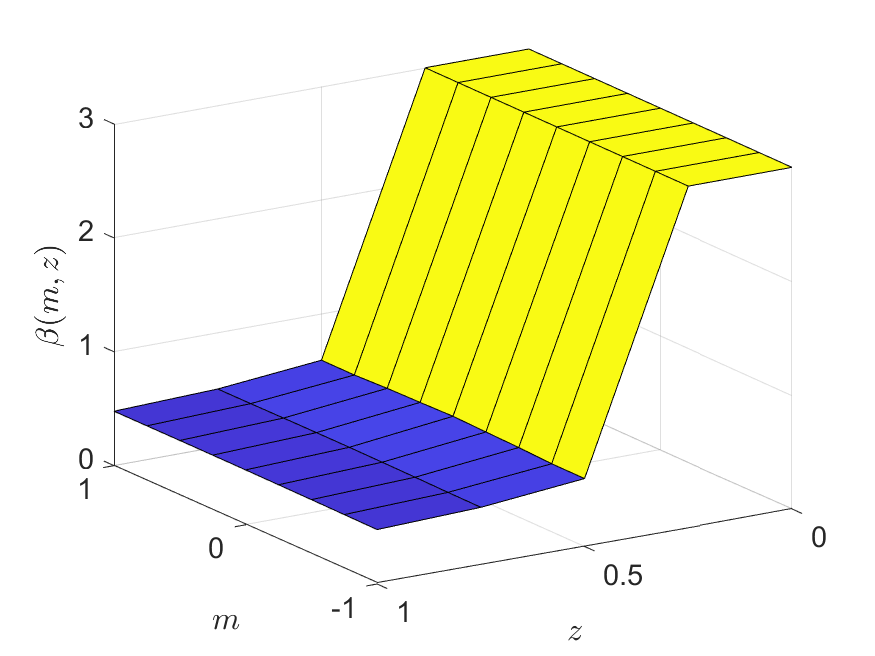}
		\caption{}
		\label{fig:NE_BetaFunction_t1}
	\end{subfigure}
	\begin{subfigure}{0.32\textwidth}
		\includegraphics[width=1\textwidth]{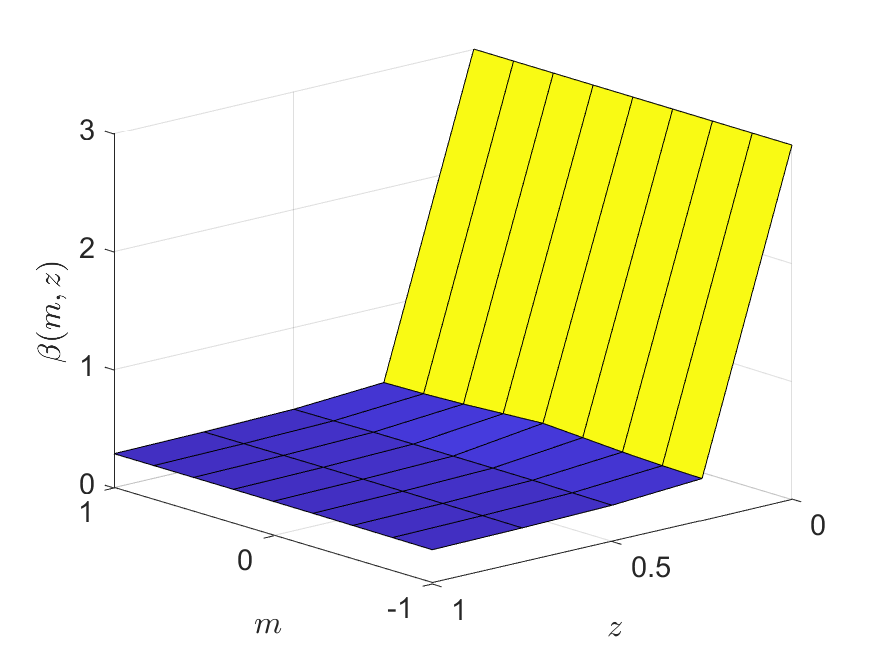}
		\caption{}
		\label{fig:NE_BetaFunction_t2}
	\end{subfigure}
	\begin{subfigure}{0.32\textwidth}
		\includegraphics[width=1\textwidth]{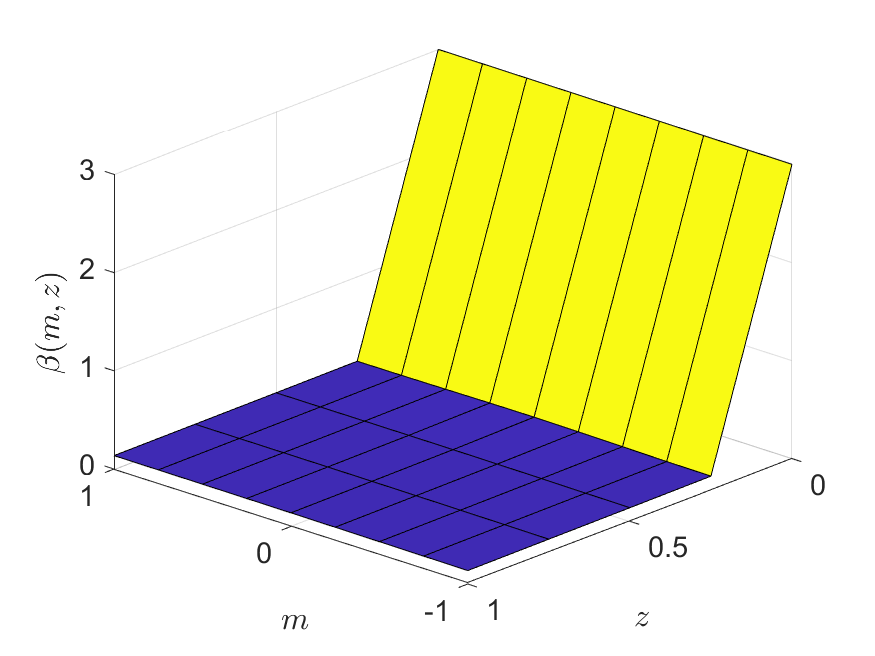}
		\caption{}
		\label{fig:NE_BetaFunction_t3}
	\end{subfigure}
	\caption{\label{fig:NE_BetaFunction} The optimal solution $\pmb{\beta}^*$ as a function of $m$ and $z$ at three observations: (A) $t=0.25$; (B) $t=0.5$; (C) $t=0.75$.}
\end{figure}

%\vspace{2cm}
\subsection{Example 3: Optimal control with high-penalty region}\label{subsec:NA-Obstacle}

To conclude this section, we illustrate our method on a one‐dimensional Ornstein–Uhlenbeck process subject to a high‐cost “obstacle” region, described in \cite{tottori_kobayashi23}.

\subsubsection{Experiment set up.}  The state $X(t)$ evolves according to a mean-reverted dynamics:
\begin{gather}
	dX(t) = (-\theta (X(t) - p) + \alpha(t,X(t)))dt + b dW(t),~~~\forall t \in [0,T],~~X(0)=x_0.
\end{gather}
We aim to minimize the expected cost
\begin{gather}
	J(\alpha^*) = \underset{\alpha(\cdot)}{\min} ~\mathbb{E} \Big[ \int_0^T Q(t,X(t)) + X^2(t) + \alpha^2(t,X(t)) dt + 10X^2(T)\Big],
\end{gather}
with the obstacle function $Q(t,x)$ given by
\begin{gather}
	Q(t,x) = \begin{cases}
		1000,\quad\text{if }0.3 \leq t \leq 0.6,~0.1 \leq |x| \leq 2.0,\\
		0,\quad\quad\text{otherwise}
	\end{cases}
\end{gather}  

A single noisy measurement is taken at time $t_1=T/2$
\begin{gather}
	Y(t_1) = X(t_1) + \varepsilon Z(t_1),~~~Z \sim \mathcal{N}(0,1).
\end{gather}

It is easy to show that the HJB equation for this problem reads:
\begin{subequations}
	\label{eq:parameterized-HJB-OU_num_obstacle}
	\begin{align}
		\label{eq:parameterized-HJB-OU-1_num}
		&\frac{\pa \widehat{U}}{\pa t}(t, m, z) + \bar{Q}(t,m,z + m^2) + m^2 + z - \theta (m-p) \frac{\pa \widehat{U}}{\pa m}(t,m,z) + \\
		& \quad + (b^2- 2\theta z) \frac{\pa \widehat{U}}{\pa s}(t,m,z) 
		- \f{1}{4} \left( \frac{\pa \widehat{U}}{\pa m}(t,m,z) \right)^2 = 0,~~ t_n \le t < t_{n+1},\quad n=0,...,2\\
		\label{eq:parameterized-HJB-OU-2_num_obstacle}
		&\widehat{U}(t_n^-, m, z) = \int_{\R} \widehat{U}\biggl( t_n, m+ \frac{z}{\sqrt{z + \varepsilon^2}}w, \frac{z \varepsilon^2}{z + \varepsilon^2} \biggr) \phi(w) \dd w,~~n=1\\ 
		\label{eq:parameterized-HJB-OU-3_num_obstacle}
		&\widehat{U}(T,m,z) = 10\Big(m^2 + z \Big),
	\end{align}
\end{subequations}
with optimal control given in closed form:
\begin{gather}
	\alpha^*(t) = -\frac{1}{2} \frac{\pa \widehat{U}}{\pa m}(t,m,z).
\end{gather}

\subsubsection{Simulation and results.} We simulate the process over $t \in [0,1]$ with a single observation at time $t_1=0.5$, and the domain defined by $m \in [-8,8]$, $z \in [0,1]$, with step sizes $\Delta t = 5 \cdot 10^{-4}$, $\Delta m = \Delta z = 0.0625$. The parameters of the OU process are $p=2$, $b=0.5$, and observation noise $\varepsilon = 0.2$. 

Figure \ref{fig:NE_ValFunc_obstacle} shows the slices of the value function. As seen in Figure \ref{fig:NE_ValFunc_obstacle_A}, at time $t_0=0$ the value function has a singularity at $m=0$ and is asymmetric about the origin. The obstacle region $\{0.3 \leq t \leq 0.6,~-2 \leq m \leq 2\}$ appears as a clearly visible elevation in the surface of $(t,m)$-slice at $z=1$ (Figure \ref{fig:NE_ValFunc_obstacle_B}). Although there is a narrow gap in the obstacle for $-0.1 \leq x \leq 0.1$, its width is so small relative to the process variance that paths will very likely intersect the high‐cost region — so the control must still push the paths around the obstacle. Figure \ref{fig:NE_ValFunc_obstacle_C} shows $(t,z)$-slice at $m=1$. Being at $m=1$ during $0.3 \leq t \leq 0.6$ incurs high cost, which appears as an elevated band.

\begin{figure}[h!]
	\centering
	\begin{subfigure}{0.32\textwidth}
		\includegraphics[width=1\textwidth]{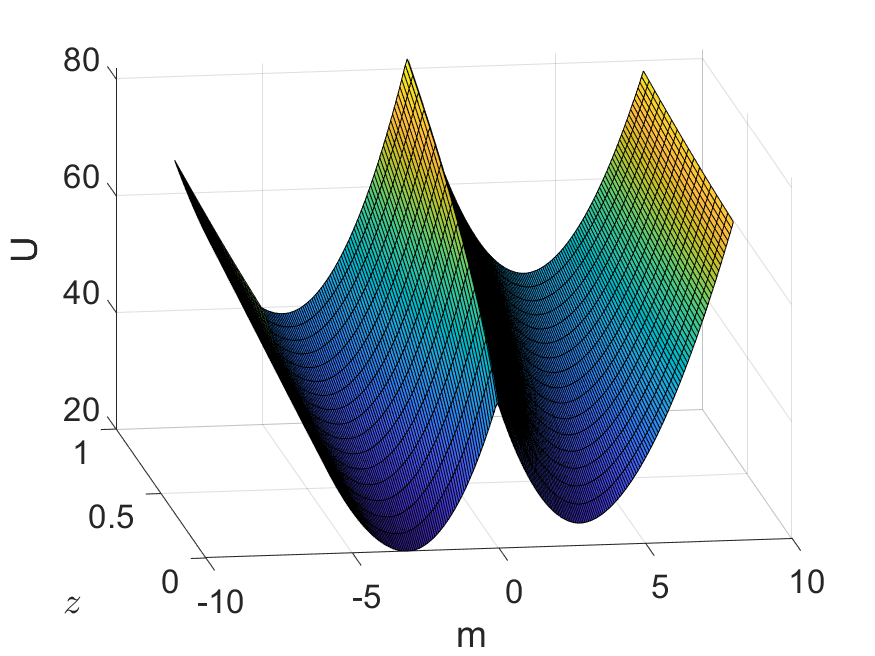}
		\caption{}
		\label{fig:NE_ValFunc_obstacle_A}
	\end{subfigure}
	\begin{subfigure}{0.32\textwidth}
		\includegraphics[width=1\textwidth]{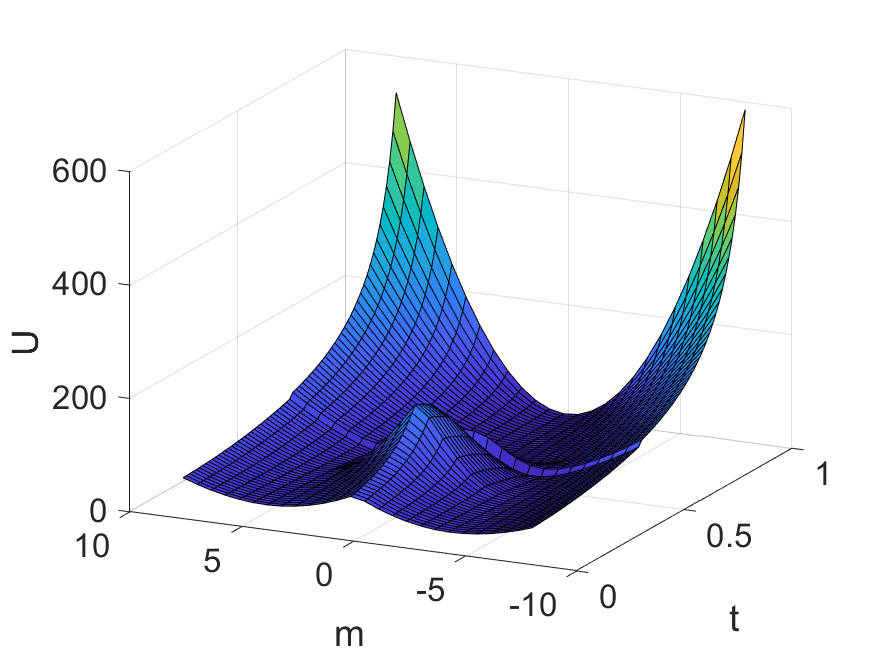}
		\caption{}
		\label{fig:NE_ValFunc_obstacle_B}
	\end{subfigure}
	\begin{subfigure}{0.32\textwidth}
		\includegraphics[width=1\textwidth]{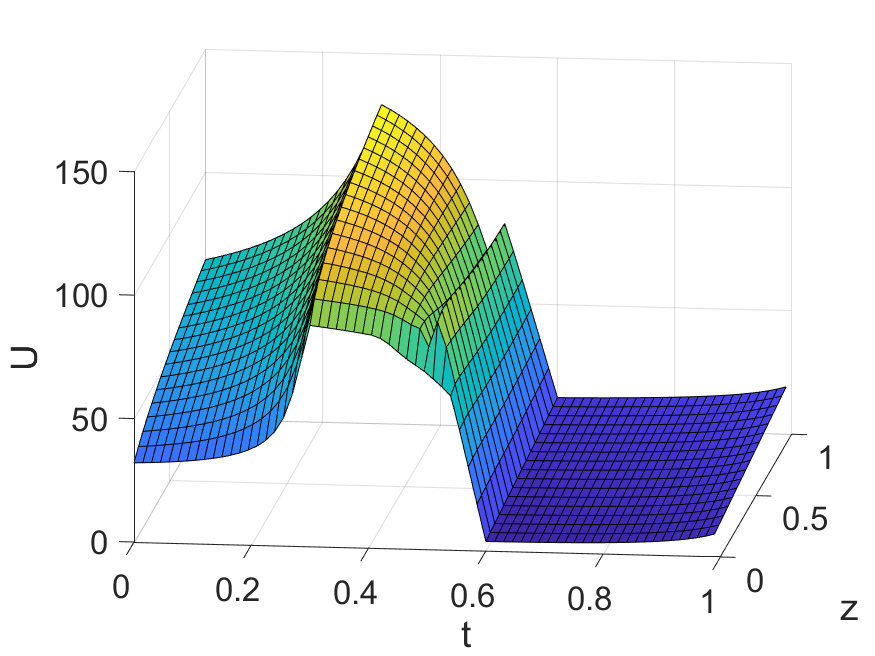}
		\caption{}
		\label{fig:NE_ValFunc_obstacle_C}
	\end{subfigure}
	\caption{\label{fig:NE_ValFunc_obstacle} The slices of the value function for optimal control with high-penalty region. (A) $(m,z)$-slice at time $t_0=0$; (B)  $(t,m)$-slice at fixed $z=1$; (C) $(t,z)$-slice at fixed $m=1$.}
\end{figure} 

Figure \ref{fig:NE_Opt_path_pos} shows the optimal paths of the mean $m^*(t)$, the variance $z^*(t)$, optimal control $\alpha^*(t)$ and cost over the optimal path for initial point $(m_0,z_0) = (0,1)$. As seen in Figure \ref{fig:NE_ValFunc_obstacle_A}, the optimal mean $m^*(t)$ is pushed far enough from the obstacle so that the probability of hitting the obstacle is low. Because of the high terminal cost $10X^2(T)$, the control drives the mean towards zero. After the observation, the variance $z^*(t)$ drops as seen from Figure \ref{fig:NE_ValFunc_obstacle_B}, which allows the trajectories to pass closer to the obstacle with less uncertainty. Figure  \ref{fig:NE_ValFunc_obstacle_C} shows the corresponding optimal control $\alpha^*(t)$. Note in the blue sample the large positive push right after $t=0.5$, needed to avoid the obstacle. The yellow path, starting further out, receives a negative push to drive the mean back towards zero.

\begin{figure}[h!]
	\centering
	\begin{subfigure}{0.48\textwidth}
		\includegraphics[width=1\textwidth]{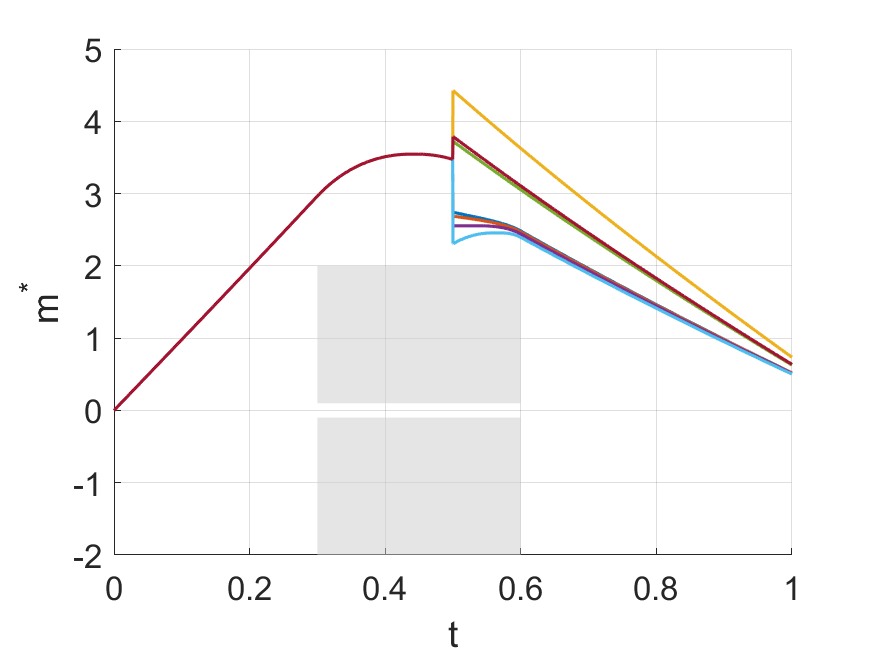}
		\caption{}
		\label{fig:NE_Opt_path_pos_A}
	\end{subfigure}
	\begin{subfigure}{0.48\textwidth}
		\includegraphics[width=1\textwidth]{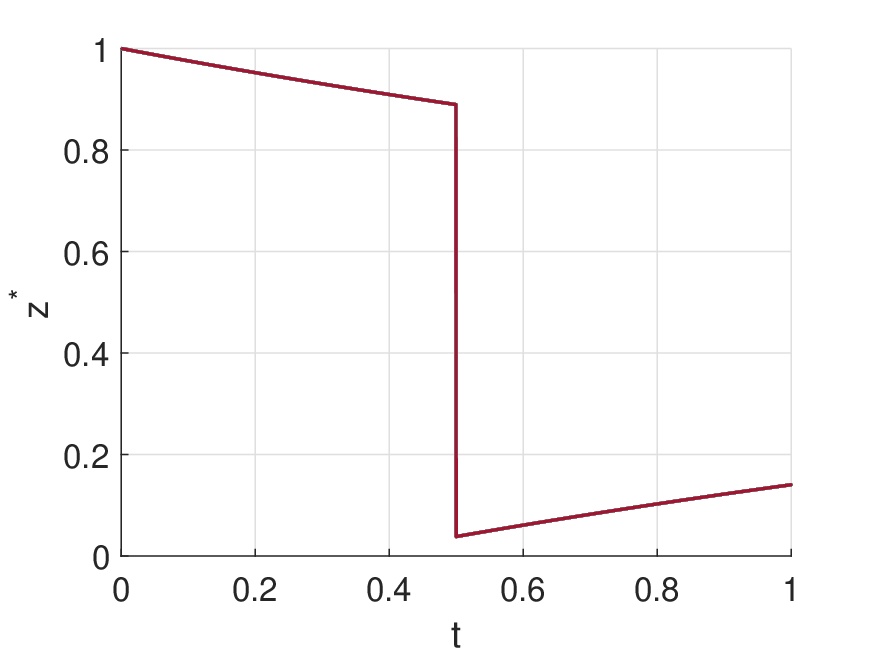}
		\caption{}
		\label{fig:NE_Opt_path_pos_B}
	\end{subfigure}
	\begin{subfigure}{0.48\textwidth}
		\includegraphics[width=1\textwidth]{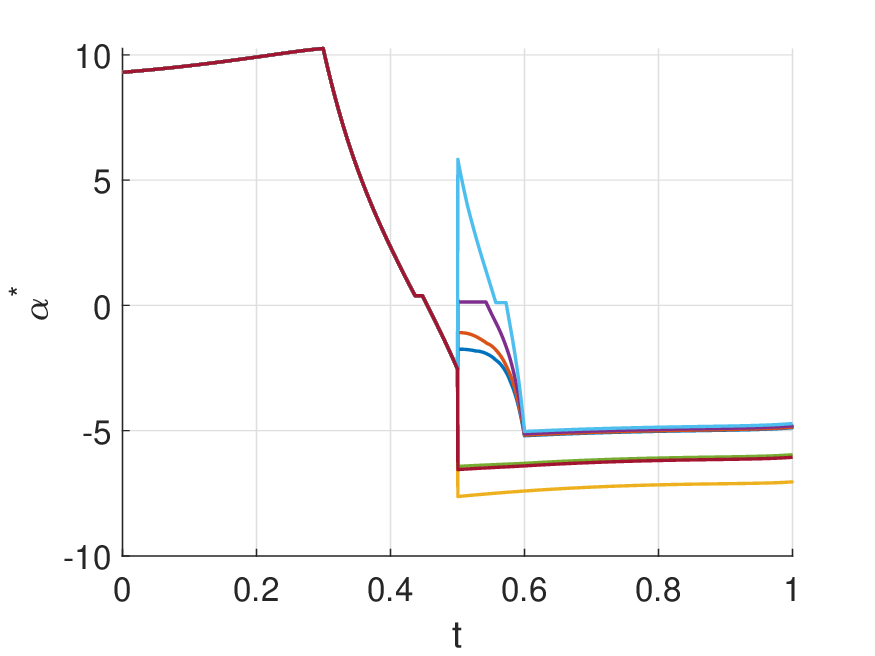}
		\caption{}
		\label{fig:NE_Opt_path_pos_C}
	\end{subfigure}
	\begin{subfigure}{0.48\textwidth}
		\includegraphics[width=1\textwidth]{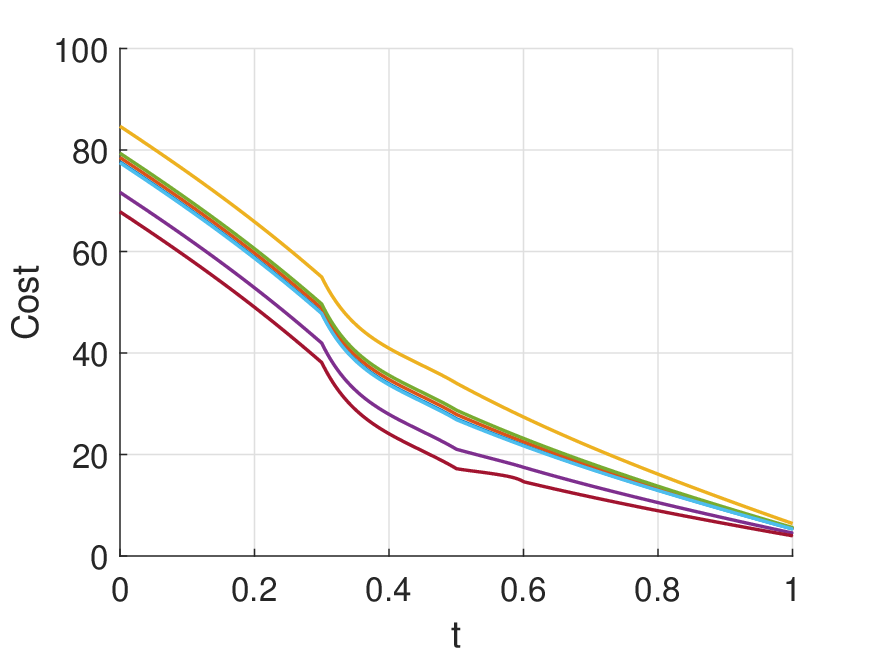}
		\caption{}
		\label{fig:NE_Opt_path_pos_D}
	\end{subfigure}
	\caption{\label{fig:NE_Opt_path_pos} The optimal paths of (A) mean $m^*(t)$, (B) variance $z^*(t)$, (C) controls $\alpha^*(t)$, and (D) cost to go, given the initial point $(m_0,z_0)=(0,1)$. The solution by HJB is $\widehat{U}(t_0,m_0,z_0)=81.0159$, and estimated over optimal paths by Monte Carlo with 100 simulations is $J \approx 79.7914 \pm 2.2261$ (95\% CI: $[77.5653, 82.0175]$).}
\end{figure}

Figure \ref{fig:NE_Opt_path_neg} shows plots of the optimal paths of the mean $m^*$, the variance $z^*$, the optimal control $\alpha^*$ and the cost over the optimal path for a different initial point - $(m_0,z_0) = (-0.125,1)$. Starting from a slightly negative initial mean $m_0=-0.125$, the optimal strategy pushes the mean around the obstacle on the opposite side. Otherwise, the behavior of the optimal paths are analogous to the previous case.
\begin{figure}[h!]
	\centering
	\begin{subfigure}{0.48\textwidth}
		\includegraphics[width=1\textwidth]{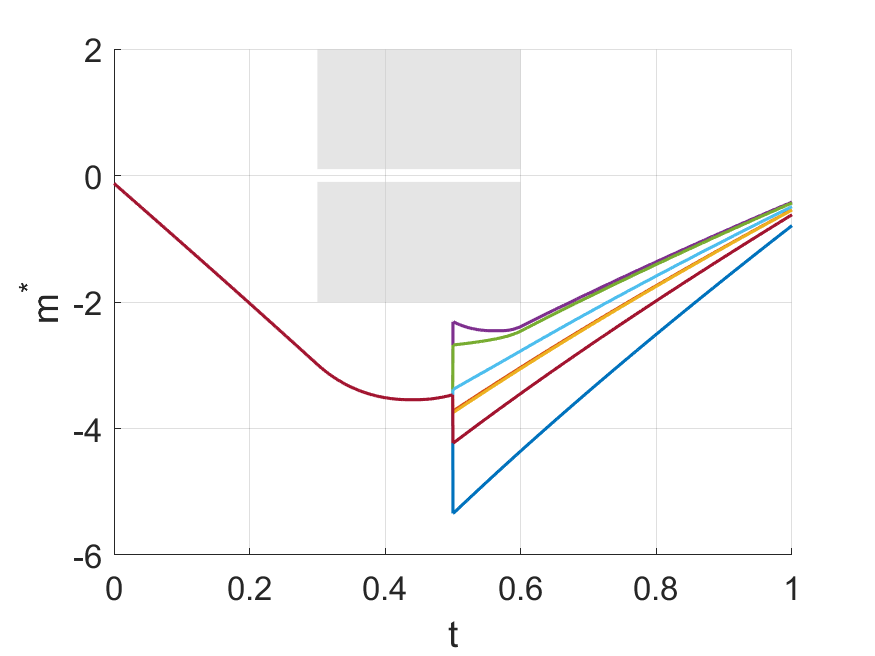}
		\caption{}
		\label{fig:NE_Opt_path_neg_A}
	\end{subfigure}
	\begin{subfigure}{0.48\textwidth}
		\includegraphics[width=1\textwidth]{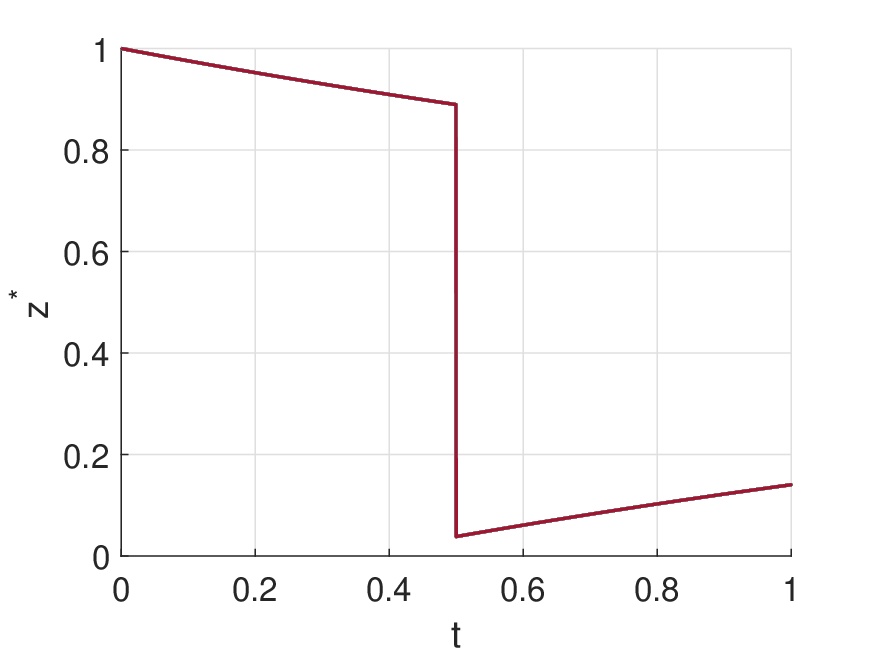}
		\caption{}
		\label{fig:NE_Opt_path_neg_B}
	\end{subfigure}
	\begin{subfigure}{0.48\textwidth}
		\includegraphics[width=1\textwidth]{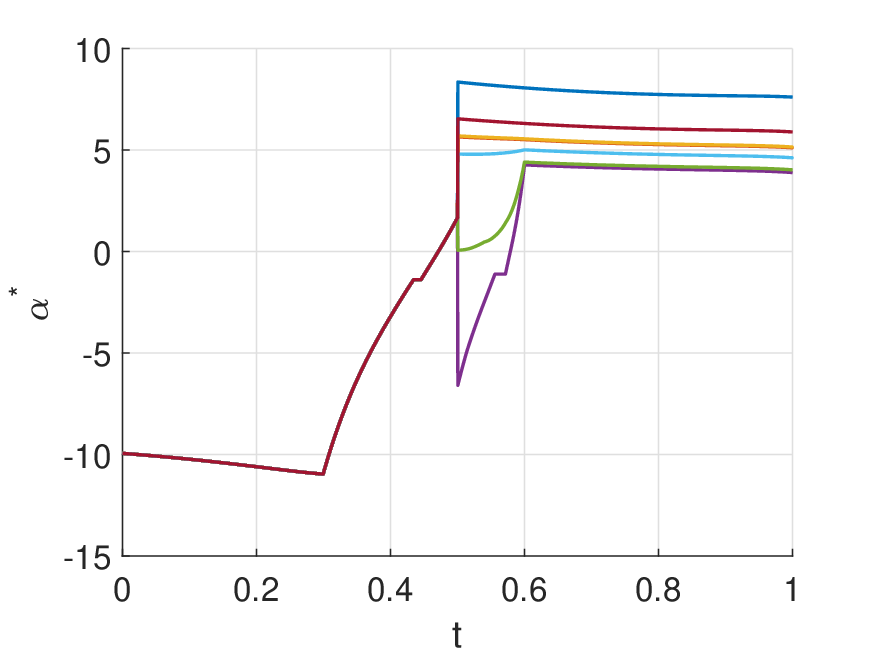}
		\caption{}
		\label{fig:NE_Opt_path_neg_C}
	\end{subfigure}
	\begin{subfigure}{0.48\textwidth}
		\includegraphics[width=1\textwidth]{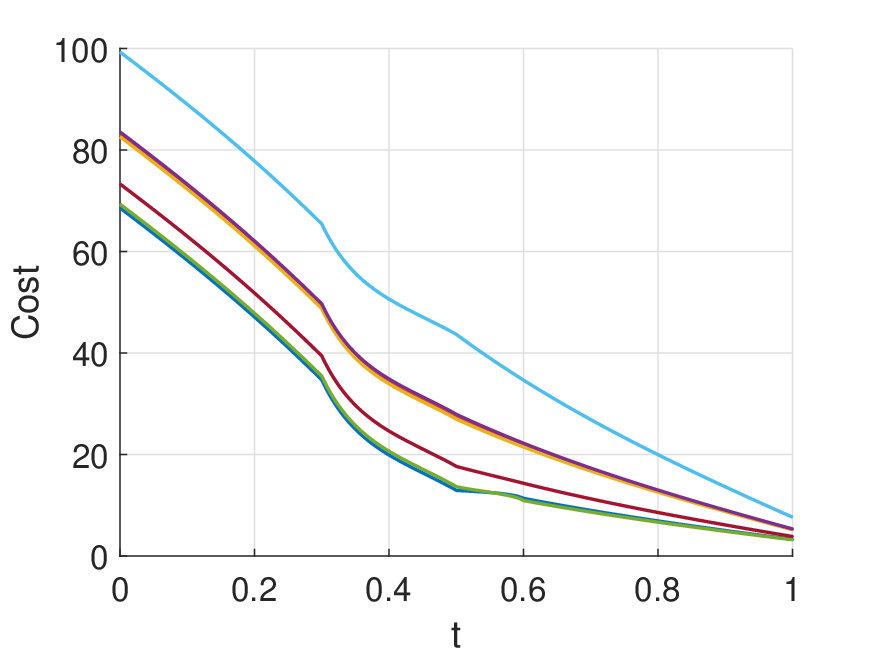}
		\caption{}
		\label{fig:NE_Opt_path_neg_D}
	\end{subfigure}
	\caption{\label{fig:NE_Opt_path_neg} The optimal paths of (A) mean $m^*(t)$, (B) variance $z^*(t)$, (C) controls $\alpha^*(t)$, and (D) cost to go, given the initial point $(m_0,z_0)=(-0.125,1)$. The solution by HJB is $\widehat{U}(t_0,m_0,z_0)=80.6807$, and estimated over optimal paths by Monte Carlo with 100 simulations is $J \approx 79.2191 \pm 2.6703$ (95\% CI: $[76.5488, 81.8895]$).}
\end{figure}

\section{Multidimensional example - Kalman filters with control}
\label{sec:kalman-filters-with}

% \alert{RAUL: write the connection to the standard Kalman filter.}
% \todo[inline]{CB: This section still needs some work. The calculations are done, but please explain the context and provide interpretation. E.g., many readers might not understand what any of that has to do with Kalman filters, I suspect.}

% !TEX root = bayesOptimalControl.tex
%\textcolor{blue}{
In this section we generalize the linear quadratic example from Section \ref{sec:optimal-control-an} to the optimal control of a multidimensional Ornstein-Uhlenbeck process under noisy, discrete time Gaussian observations.
While the multidimensional case is practically important, we initially introduced the concepts in a one-dimensional setting for didactical purposes. Here, we demonstrate that the information update step following each observation can be effectively characterized using the Kalman filter.
%}  

Consider the multidimensional Ornstein-Uhlenbeck process in $\mathbb{R}^d$, with its controlled generator
\begin{equation}\label{eq:OU-generator}
	\mathcal{G}(u) f(x) = \sum_{i=1}^d\Big((-\theta_i x_i + u_i) \partial_{x_i} f(x) + \sum_{j=i}^{d}\frac{b^2_{ij}}{2} \partial_{x_ix_j} f(x)\Big).
\end{equation}
The sets of summation indices above are $\mathcal{I} = \{1,...,d\}$, $\mathcal{J}_i = \{i,...,d\}$, and we will later use the notation $\mathcal{J}^+_i = \{i+1,...,d\}$ as well. Following Remark \ref{rem:parameterized-HJB},  since we are working with multivariate Gaussian distributions, it is enough to choose $\varphi_{1,i}(x) = x_i$, $\varphi_{2,ij}(x) = x_ix_j$, $i,j=1,...,d$, and then represent the value function
\begin{gather}
	V(t,\mu) = U(t,\{\ip{\mu}{\varphi_{1,i}}\}_{i \in \mathcal{I}},\{\ip{\mu}{\varphi_{2,ij}}\}_{i \in \mathcal{I},~j \in \mathcal{J}_i}) = U(t,m,s),
\end{gather}
%\textcolor{blue}{
with the first moments $m = (m_1,...,m_d) \in \mathbb{R}^d$, the second moments, $s=(\{s_{ij}\}_{i=1,...,d,~~j \in \mathcal{J}_i})$, and their corresponding definitions, namely $m_i = \int_{\mathbb{R}^d}\varphi_{1,i}(x)\mu(dx)$, $s_{ij} = \int_{\mathbb{R}^d}\varphi_{2,ij}(x)\mu(dx)$. Thus, $U:\mathbb{R}^+\times \mathbb{R}^{1 + d + \frac{d(d+1)}{2}} \mapsto \mathbb{R}$ is a  real valued function with a particular domain, since the covariance matrix associated to $\mu$ has to be non negative definite.
%}
 %$1 + d + \frac{d(d+1)}{2}$-dimensional function. 

%\todo[inline]{Begin of comment. I think that the Hamiltonian misses a few terms. Hence, let me retry the derivation with a bit more details. As far as I can see, the terms in blue are missing in the calculation later on.}

Using the representation of the flat derivative \eqref{eq:change-of-variables-moments}, we obtain
\begin{align}\label{eq:rep_FD}
\nonumber	\mathcal{G}(u) \frac{\delta V}{\delta \mu}(t,\mu)(x) &= \sum_{i=1}^d \biggl\{ (-\theta_i x_i + u_i) \biggl( \sum_{l=1}^{d} \biggl[ \frac{\pa U}{\pa m_l}(t,m,s) \pa_{x_i} \varphi_{1,l}(x) + \sum_{k=l}^{d} \frac{\pa U}{\pa s_{lk}}(t,m,s) \pa_{x_i} \varphi_{2,lk}(x) \biggr] \biggl) \\
	&\quad + \sum_{j=i}^{d} \frac{b_{ij}^2}{2} \biggl( \sum_{l=1}^{d} \biggl[ \frac{\pa U}{\pa m_l} (t,m,s) \pa_{x_ix_j} \varphi_{1,l}(x) + \sum_{k=l}^{d} \frac{\pa U}{\pa s_{lk}}(t,m,s) \pa_{x_ix_j} \varphi_{2,lk}(x) \biggr] \biggr] \biggr\}.
\end{align}
%\textcolor{blue}{
Now we need to substitute the particular values of the partial derivatives of the $\varphi$ functions. Indeed, noting right away that $\pa_{x_ix_j} \varphi_{1,l} \equiv 0$, each of the terms in \eqref{eq:rep_FD} yield
%}
\[
	\sum_{i=1}^d (-\theta_i x_i + u_i) \sum_{l =1}^d%\in \mathcal{I}} 
	\frac{\pa U}{\pa m_l}(t,m,s) \pa_{x_i} \varphi_{1,l}(x) = \sum_{i=1}^d (-\theta_i x_i + u_i) \frac{\pa U}{\pa m_i}(t,m,s),
\]
\begin{multline*}
	\sum_{i=1}^d  (-\theta_i x_i + u_i) \sum_{l=1}^{d} \sum_{k=l}^{d} \frac{\pa U}{\pa s_{lk}}(t,m,s) \pa_{x_i} \varphi_{2,lk}(x) = \\
	\sum_{i=1}^d  (-\theta_i x_i + u_i) \biggl( 2 \frac{\pa U}{\pa s_{ii}}(t,m,s) \varphi_{1,i}(x) + \sum_{k =i+1}^d%\in \mathcal{J}_i^+} 
	\frac{\pa U}{\pa s_{ik}}(t,m,s) \varphi_{1,k}(x) 
	%\textcolor{blue}{
	+ \sum_{1 \le l <i} \frac{\pa U}{\pa s_{li}}(t,m,s) \varphi_{1,l}(x)
	%} 
	\biggr),
\end{multline*}
and finally
\begin{multline*}
	\sum_{i=1}^d \sum_{j=i}^{d} \frac{b_{ij}^2}{2} \sum_{l=1}^{d} \sum_{k=l}^{d} \frac{\pa U}{\pa s_{lk}}(t,m,s) \pa_{x_ix_j} \varphi_{2,lk}(x) =\\
	\sum_{i=1}^d \biggl( b_{ii}^2 \frac{\pa U}{\pa s_{ii}}(t,m,s) 
	%\textcolor{blue}{
	+ \sum_{j=i+1}^{d} \frac{b_{ij}^2}{2} \frac{\pa U}{\pa s_{ij}}(t,m,s)
	%} 
	\biggr).
\end{multline*}

% \todo[inline]{End of comment.}
%\todo[inline]{CB: Thanks for the corrections! I think we can remove many of the intermediate steps now...}

%Using (3.15) in 

%\textcolor{blue}{
To compute the resulting Hamiltonian following \eqref{eq:Hamiltonian}, we need to use the value of $\mathcal{G}(u)\frac{\delta V}{\delta \mu}(t,\mu)$ from \eqref{eq:rep_FD},  
 which yields
 %}
%\scriptsize
\begin{align*}
	&\mathcal{H}\Bigg(t, \mu, \frac{\delta V}{\delta \mu}(t,\mu)\Bigg) 
	= \inf_{u \in \R^d} \left\{ \ip{\mu}{\mathcal{G}(u)\frac{\delta V}{\delta \mu}(t,\mu)} + \sum_{i=1}^d\ip{\mu}{\varphi_{2,ii}} + \sum_{i=1}^d C_iu^2_i \right\},\\
	&= \inf_{u \in \R^d}\biggl\{ \int_{\R^d} \Bigg[ \sum_{i=1}^d\frac{\pa U}{\pa m_i}(t,m,s) (-\theta_i x_i + u_i)   \\
	&\quad\quad+ \sum_{i=1}^d\Big(2x_i \frac{\pa U}{\pa s_{ii}}(t,m,s) + \sum_{j=i+1}^{d} x_j \frac{\pa U}{\pa s_{ij}}(t,m,s)+ 
	%\textcolor{blue}{
	\sum_{1 \leq l < i} x_l \frac{\partial U}{\partial s_{li}}(t,m,s)
	%}
	\Big)(-\theta_i x_i + u_i) \\
	&\quad\quad+ \sum_{i=1}^d \Big(b^2_{ii} \frac{\pa U}{\pa s_{ii}}(t,m,s)  + 
	%\textcolor{blue}{
	\sum_{j=i+1}^{d} \frac{b^2_{ij}}{2} \frac{\pa U}{\pa s_{ij}}(t,m,s)
	%} 
	\Big)\Bigg] \mu(\dd x) + \int_{\R^d} \sum_{i=1}^d x^2_i \mu(\dd x) + \sum_{i=1}^d C_i u^2_i \biggr\}\\
%	&= \inf_{u \in \R^d} \biggl\{ \sum_{i=1}^dC_i u^2_i + \sum_{i=1}^d\left[ \int_{\R^d} \left( \frac{\pa U}{\pa m_i}(t,m,s) + 2 x_i \frac{\pa U}{\pa s_{ii}}(t,m,s) + \sum_{j=i+1}^{d} x_j \frac{\pa U}{\pa s_{ij}}(t,m,s)+\textcolor{blue}{\sum_{1 \leq l < i} x_l \frac{\partial U}{\partial s_{li}}(t,m,s)}\right) \mu(\dd x) \right]u_i  \\
%	&+ \int_{\R^d} \sum_{i=1}^d \left[x_i^2 - \theta_i x_i \frac{\pa U}{\pa m_i}(t,m,s) + (b^2_{ii}- 2\theta_i x_i^2) \frac{\pa U}{\pa s_{ii}}(t,m,s) + \sum_{j=i+1}^{d} \Big(\textcolor{blue}{\frac{b^2_{ij}}{2}}-\theta_i x_ix_j\Big) \frac{\pa U}{\pa s_{ij}}(t,m,s) -\textcolor{blue}{\theta_i x_i\sum_{1 \leq l < i} x_l \frac{\partial U}{\partial s_{li}}(t,m,s)}\right] \mu(\dd x) \biggr\}\\
%	&= \inf_{u \in \R^d} \biggl\{ \sum_{i=1}^d C_i u^2_i + \sum_{i=1}^d\left( \frac{\pa U}{\pa m_i}(t,m,s) + 2 m_i \frac{\pa U}{\pa s_{ii}}(t,m,s) +\sum_{j=i+1}^{d} m_j \frac{\pa U}{\pa s_{ij}}(t,m,s)+\textcolor{blue}{\sum_{1 \leq l < i} m_l \frac{\partial U}{\partial s_{li}}(t,m,s)}\right) u_i +\\
%	&\quad\quad + \sum_{i=1}^d\left[ s_{ii} - \theta_i m_i \frac{\pa U}{\pa m_i}(t,m,s)  + (b_{ii}^2- 2\theta_i s_{ii}) \frac{\pa U}{\pa s_{ii}}(t,m,s) + \sum_{j=i+1}^{d}  \Big(\textcolor{blue}{\frac{b^2_{ij}}{2}} - \theta_i s_{ij} \Big)\frac{\pa U}{\pa s_{ij}}(t,m,s) - \textcolor{blue}{\theta_i  \sum_{1 \leq l < i}s_{li} \frac{\pa U}{\pa s_{li}}(t,m,s)}\right] \biggr\}\\
	&=  \sum_{i=1}^d \Bigg[ s_{ii} - \theta_i m_i \frac{\pa U}{\pa m_i}(t,m,s)  + (b_{ii}^2- 2\theta_i s_{ii}) \frac{\pa U}{\pa s_{ii}}(t,m,s) \\
	&+\sum_{j=i+1}^{d}\Big(
	%\textcolor{blue}{
	\frac{b^2_{ij}}{2}
	%} 
	- \theta_i s_{ij} \Big)\frac{\pa U}{\pa s_{ij}}(t,m,s)-
	%\textcolor{blue}{
	\theta_i  \sum_{1 \leq l < i}s_{li} \frac{\pa U}{\pa s_{li}}(t,m,s)
	%} 
	\Bigg] \\
	&\quad\quad- \sum_{i=1}^d \f{1}{4C_i} \left( \frac{\pa U}{\pa m_i}(t,m,s) + 2 m_i \frac{\pa U}{\pa s_{ii}}(t,m,s) +\sum_{j=i+1}^{d} m_j \frac{\pa U}{\pa s_{ij}}(t,m,s) +
	%\textcolor{blue}{
	\sum_{1 \leq l < i} m_l \frac{\partial U}{\partial s_{li}}(t,m,s)
	%}
	\right)^2. \label{eq:U_multiD}
\end{align*}

%\normalsize
%\textcolor{blue}{
In the Hamiltonian above, %(\ref{eq:U_multiD}), 
$U$ is a function of the first and the second moments. We can transform the input second moment variables of $U$ into the corresponding entries of  the covariance matrix of $X,$ $\Sigma$, yielding a new function $\widehat{U}(t,m,z)$ as follows. Let $z_{ii} = s_{ii} - m_i^2$ and $z_{ij} = s_{ij} - m_im_j$, so that $\Sigma=\Sigma(z)$ satisfies $\Sigma_{ij} = z_{ij}$ if $i\le j$ and $\Sigma_{ij} = z_{ji}$ otherwise.
%a covariance matrix.
With this construction, $\Sigma$ is only symmetric and we will have to ensure later that it is nonnegative definite by properly defining the state domain.
%, with entries $z_{ij} = Cov(X_i,X_j)$, $i=1,\dots,d$, $j\in \mathcal{J}_i$. 
The derivative $\frac{\partial U}{\partial m_i}(t,m,s)$, for $ i \in \mathcal{I}$, can be written using the new variables, namely:
\begin{gather}
	\frac{\partial U}{\partial m_i}(t,m,s) = \frac{\partial \widehat{U}}{\partial m_i}(t,m,z) - 2m_i \frac{\partial \widehat{U}}{\partial z_{ii}} (t,m,z) - \sum_{j=i+1}^d% \in \mathcal{J}^+_i}
	m_j \frac{\partial \widehat{U}}{\partial z_{ij}}(t,m,z).
\end{gather}
Similarly,  the partial derivatives of $U$ w.r.t. $s_{ii}$, $s_{ij}$ are replaced by the corresponding derivatives of $\widehat{U}$ w.r.t. $z_{ii}$, $z_{ij}$.
The Hamiltonian for this problem expressed in terms of $\widehat{U}$ and the new input variables then reads:
%}
%\marginpar{Raul:please note that we are abusing the notation a bit, since we denote as input $\Sigma$ but in fact, by imposing symmetry we only use as input the diagonal and upper diagonal variables!}
  %\footnotesize 
  \begin{align}
	& \mathcal{H}\Bigg(t, \mu, \frac{\delta V}{\delta \mu}(t,\mu)\Bigg) =
	 \hat{\mathcal{H}}\Bigg(t, m,z, D \widehat{U}(t, m,z)\Bigg)  \nonumber \\
%
%	&=  \sum_{i=1}^d \Bigg[ z_{ii} + m^2_i - \theta_i m_i \Big(\frac{\pa \widehat{U}}{\pa m_i}(t,m,\Sigma) - \bcancel{2m_i \frac{\partial \widehat{U}}{\partial z_{ii}} (t,m,\Sigma)} - \xcancel{\sum_{j \in \mathcal{J}^+}m_j \frac{\partial \widehat{U}}{\partial z_{ij}}(t,m,\Sigma) }\Big)+ (b_{ii}^2- 2\theta_i z_{ii} - \bcancel{2\theta m_i^2}) \frac{\pa \widehat{U}}{\pa z_{ii}}(t,m,\Sigma) \nonumber\\
%	&\quad\quad + \sum_{j=i+1}^{d}\Big(\frac{b_{ij}^2}{2}- \theta_i z_{ij} - \xcancel{\theta m_im_j}\Big) \frac{\pa \widehat{U}}{\pa z_{ij}}(t,m,\Sigma) - \textcolor{blue}{\theta_i  \sum_{1 \leq l < i}z_{li} \frac{\pa \widehat{U}}{\pa z_{li}}(t,m,s)}- \textcolor{blue}{\theta_i  \sum_{1 \leq l < i} m_lm_i \frac{\pa \widehat{U}}{\pa z_{li}}(t,m,s)} \nonumber\\
%	&\quad\quad -  \f{1}{4C_i} \Bigg( \frac{\pa \widehat{U}}{\pa m_i}(t,m,\Sigma) - \cancel{2m_i \frac{\partial \widehat{U}}{\partial z_{ii}} (t,m,\Sigma)} - \cancel{\sum_{j \in \mathcal{J}^+}m_j \frac{\partial \widehat{U}}{\partial z_{ij}}(t,m,\Sigma)} \nonumber \\
%	&\quad\quad\quad\quad\quad\quad\quad\quad\quad\quad\quad\quad + \cancel{2 m_i \frac{\pa \widehat{U}}{\pa z_{ii}}(t,m,s)} +\cancel{\sum_{j=i+1}^{d} m_j \frac{\pa \widehat{U}}{\pa z_{ij}}(t,m,s)} +\textcolor{blue}{\sum_{1 \leq l < i} m_l \frac{\partial \widehat{U}}{\partial z_{li}}(t,m,s)}\Bigg)^2 \Bigg]\nonumber\\
	& =  \sum_{i=1}^d%{i\in \mathcal{I}}
	 \Bigg[ z_{ii} + m^2_i - \theta_i m_i \frac{\pa \widehat{U}}{\pa m_i}(t,m,z)  + (b_{ii}^2- 2\theta_i z_{ii}) \frac{\pa \widehat{U}}{\pa z_{ii}}(t,m,z) \\
	&- 
	%\textcolor{blue}{
	\theta_i  \sum_{1 \leq l < i}z_{li} \frac{\pa \widehat{U}}{\pa z_{li}}(t,m,z)
	%}
	- %\textcolor{blue}{
	\theta_i  \sum_{1 \leq l < i} m_lm_i \frac{\pa \widehat{U}}{\pa z_{li}}(t,m,z)
	%}
	\\
	& + \sum_{j=i+1}^d%{j \in \mathcal{J}^+_i} 
	 \Big( \frac{b^2_{ij}}{2}- \theta_i z_{ij}\Big) \frac{\pa \widehat{U}}{\pa z_{ij}}(t,m,z) -  \f{1}{4C_i} \left( \frac{\pa \widehat{U}}{\pa m_i}(t,m,z) +
	 %\textcolor{blue}{
	 \sum_{1 \leq l < i} m_l \frac{\partial \widehat{U}}{\partial z_{li}}(t,m,z)
	 %}
	 \right)^2\Bigg]. \nonumber
	\label{Hamiltonian_multiD}
\end{align}
%\textcolor{blue}{
This Hamiltonian determines the evolution of the value function $\widehat{U}$ in between observations according to the HJB equation in its natural domain, namely $m$ in the same domain as $x$ and $z$ such that $\Sigma(z)$ is non negative definite.
%}
%\normalsize

\textbf{Update after each observation:}

At the observation time $t_i$ we gather the datum $Y^\alpha_i \coloneqq H\widehat{X}^\alpha_i + \varepsilon Z_i$ with
$\widehat{X}_i^\alpha \in \mathbb{R}^d$, and $Y_i^\alpha$, $Z_i$ $\in \mathbb{R}^n$. Here we have $\widehat{X}^\alpha_i \sim \mu_{t^-_i} = \mathcal{N}(m_{t_i^-}, \Sigma_{t_i^-})$ and $Z_i \sim \mathcal{N}(0,1)$, mutually independent. After observation, the conditional distribution of $\widehat{X}^\alpha_i|_{Y^\alpha_i=y}$ is Gaussian and can be computed using the Kalman filter as follows:
%
%\marginpar{Raul: should we end up by writing the interlaced PDE to sum up at the end of this section?}
\begin{gather}
	\widehat{X}^\alpha_i|_{Y^\alpha_i=y} \sim \mathcal{N}\Big(m_{t_i}, \Sigma_{t_i}\Big) \\
	\text{with } m_{t_i}=m_{t^-_i} + K_{t_i}(y - Hm_{t^-_i}),\\
	\Sigma_{t_i} = (I-K_{t_i}H)\Sigma_{t^-_i}, \\
	\text{and the Kalman gain matrix } K_{t_i} = \Sigma_{t^-_i}H^T\Big(H\Sigma_{t^-_i}H^T + \epsilon^2I\Big)^{-1}
\end{gather}
%
%\textcolor{blue}{ 
We now introduce the auxiliary function $\tilde z: \R^{d\times d}\to \R^{d+d(d+1)/2}$ 
with $\tilde z(\Sigma)_{ij} = \Sigma_{ij}$ for $i\le j.$
Thus, according to \eqref{eq:HJB-observations} we update $\widehat{U}(t_i^-, m_{t^-_i}, \tilde z(\Sigma_{t^-_i}))$ as follows:
%}
\begin{gather}\label{update_multiD}
	\widehat{U}(t_i^-, m_{t^-_i}, \tilde z(\Sigma_{t^-_i})) = \int_{\R^n} \widehat{U}\biggl( t_i, m_{t^-_i} +  K_{t_i}Lw, \tilde z((I-K_{t_i}H)\Sigma_{t^-_i}) \biggr) \phi(w) \dd w,
\end{gather}
where the matrix $L$ is such that $LL^T = H\Sigma_{t^-_{t_i}}H^T + \epsilon^2I$ and $\phi(w)$ is a standard Gaussian density in $\R^n$. The matrix $L$ can be computed using, for instance, a Cholesky decomposition.
%\textcolor{blue}{
\begin{remark}[Observation costs]
We can have additional, controllable costs, from the observation as in \eqref{eq:HJB-observations}. In that case, \eqref{update_multiD} preserves the same structure, and only an additional optimization step is necessary to find the optimal data acquisition setup $\beta.$
\end{remark}
Summing up, in this section we have characterized the different ingredients to set up HJB equation in our multivariate Gaussian process case. The Hamiltonian defined by (\ref{Hamiltonian_multiD}) determines the evolution of the value function between observation times $t_{i-1}$, $t_{i}$, $i=1,...,n$, and the conditional expectation at each of the observation times $t_{i}$ (\ref{update_multiD}) using the Kalman filter equations.
 It is important to emphasize at this point that the state dimension of the time dependent HJB PDE is $d+\frac{d(d+1)}{2}$, which is relatively large, even for $d=2$. This means that one should be particularly careful when choosing appropriate discretization tools, to address the ensuing curse of dimensionality. 
%}
%%% Local Variables:
%%% mode: latex
%%% TeX-master: "bayesOptimalControl"
%%% End:

\textbf{Acknowledgments}
C.~Bayer acknowledges seed support for DFG CRC/TRR 388 ``Rough Analysis, Stochastic Dynamics and Related Topics. This work was supported by the KAUST Office of
Sponsored Research (OSR) under Award No. URF/1/2584-01-01 and the
Alexander von Humboldt Foundation. E.~Rezvanova, and R.~Tempone are
members of the KAUST SRI Center for Uncertainty Quantification in
Computational Science and Engineering.

\appendix

\section{Details on the characteristics}\label{sec:Details on characteristic}
% !TEX root = bayesOptimalControl.tex

%\textcolor{blue}{
As discussed in Section \ref{subsubsec:NA-LQ_control_domain}, it is necessary to truncate the originally unbounded domain to obtain a numerical solution for the proposed PDE (\ref{eq:parameterized-HJB-OU-1_num}). To effectively truncate the domain and construct a convergent numerical scheme, it is crucial to understand the propagation of information within the PDE. To achieve this, we compute the characteristics of equation (\ref{eq:parameterized-HJB-OU-1_num})  by following the methodology outlined in Section 3.2 of  \cite{evans@2010}. These characteristics provide the curves along which information flows from initial points throughout the domain in first-order PDEs. By knowing these curves, we can appropriately select a domain where the characteristic lines flow from inside the domain to the outside, ensuring that the numerical scheme is well-posed and convergent.
%}

%\todo{RT: Eliza, could you please check the characteristics on the multidimensional case? We need some discussion to provide proper boundary conditions on that case ...}
Now we compute the characteristics of \eqref{eq:parameterized-HJB-OU-1_num}. %following section 3.2 of \cite{evans@2010}. 
Letting $p=\frac{\partial U}{\partial m}$, $q=\frac{\partial U}{\partial z}$,
the characteristics are given by the  following system of ordinary differential equations (ODEs):
\begin{gather}
\begin{cases}
\frac{dm}{d\tau} = \theta m(\tau) + \frac{1}{2C} p(\tau), \\
\frac{dz}{d\tau} =  -(b^2 - 2 \theta z(\tau)), \\
\frac{dp}{d\tau} = 2m(\tau) - \theta p(\tau),\\
\frac{dq}{d\tau} = 1 - 2\theta q(\tau).
\end{cases}
\end{gather}
The solution to this system is given by:
\begin{gather}\label{eq:char_eq}
\begin{cases}
z(\tau) = \frac{b^2}{2 \theta} + C_1(m_0,z_0) e^{2 \theta \tau}, \\
q(\tau) = \frac{1}{2 \theta} + C_2(m_0,z_0) e^{-2 \theta \tau}, \\
p(\tau) = C_3(m_0,z_0) e^{\sqrt{\theta^2 + 1/C}\tau} + C_4(m_0,z_0) e^{-\sqrt{\theta^2 + 1/C}\tau}, \\
m(\tau) = (\theta + \sqrt{\theta^2 + 1/C}) \frac{C_3(m_0,z_0)}{2} e^{\sqrt{\theta^2 + 1/C}\tau} +...\\
 \quad\quad\quad+(\theta - \sqrt{\theta^2 + 1/C}) \frac{C_4(m_0,z_0)}{2} e^{-\sqrt{\theta^2 + 1/C}\tau}
\end{cases}
\end{gather}
where the constants $C_1$, $C_2$, $C_3$ and $C_4$ depend on the root of the characteristic line - the initial point $(m_0,z_0)$, and can be determined from equations for $m(0)$, $z(0)$ and $p(m(0),z(0))$, $q(m(0),z(0))$. The explicit values of the constants are the following:
\begin{gather}\label{eq:char_const}
\begin{cases}
C_1(m_0,z_0) = z_0 - \frac{b^2}{2\theta}, \\
C_2(m_0,z_0) = q_0(m_0, z_0) - \frac{1}{2\theta}, \\
C_3(m_0,z_0) = p(m_0, z_0) - C_4(m_0, z_0),
 \\
C_4(m_0,z_0) = \Big(\frac{\theta + \sqrt{\theta^2 + 1/C}}{2} p(m_0, z_0)-m_0\Big)\frac{1}{\sqrt{\theta^2+1/C}}.
\end{cases}
\end{gather}

%\textcolor{blue}{
Thus, the characteristic curves are given by (\ref{eq:char_eq}) - (\ref{eq:char_const}). Figure \ref{Char} in Section \ref{subsec:NA-LQ_control} shows an example of the characteristics for $m \in [-1,1]$ and $z \in [0,1]$. Choosing a domain which contains $m=0$ and $z=0.5$ ensures that the information flows across the boundary from inside of the domain to the outside.
A similar discussion applies to the multidimensional case.
%} 

\section{Derivation of the monotonicity condition}\label{sec:Derivation of the monotonicity condition}
%\textcolor{blue}{
In Section \ref{subsubsec:NA-LQ_control_num_scheme} we construct the numerical scheme which is guaranteed to converge to the true solution, provided that it is consistent and monotone. In this Appendix we derive the condition which ensures that our scheme is monotone. %} 
To do that, we check directly under which conditions the function $G$ is monotone in each of its arguments $U^{n}_{i-1,j}$, $U^{n}_{i,j}$, $U^{n}_{i+1,j}$, $U^{n}_{i,j-1}$, $U^{n}_{i,j+1}$. 

Let's show the monotonicity w.r.t. $U^{n}_{i,j}$ as an example. For a grid point $(m_i,z_j)$, let $\alpha_0$ be the point where the Hamiltonian $H_1$ changes sign, so that $\frac{\pa H_1}{\pa (\pa \hat{U}/\pa m)}(m_i,z_j,\alpha)(\alpha-\alpha_0) \geq 0$. Let us also use the following notation for convenience:
\begin{gather}
\alpha = \frac{U^n_{i,j}-U^n_{i-1,j}}{\Delta m},~~~\beta = \frac{U^n_{i+1,j}-U^n_{i,j}}{\Delta m}, \\
\gamma = \frac{U^n_{i,j+1} - U^n_{i,j}}{\Delta z},~~~\delta = \frac{U^n_{i,j} - U^n_{i,j-1}}{\Delta z}
\end{gather}

We write out the scheme $G$ explicitly as follows:
\begin{align}
U^{n+1}_{i,j} = &~U^n_{i,j} - \Delta t \Big[\indic{\{\beta \leq \alpha_0\}}\Big(H_1\Big(m_i,z_j,\frac{U^n_{i+1,j}-U^n_{i,j}}{\Delta m}\Big) - H_1(m_i,z_j,\alpha_0)\Big) \\
&+ (1-\indic{\{\alpha\leq\alpha_0\}})\Big(H_1\Big(m_i,z_j,\frac{U^n_{i,j}-U^n_{i-1,j}}{\Delta m}\Big)-H_1(m_i,z_j,\alpha_0)\Big) + H_1(m_i,z_j,\alpha_0) \\
& +\indic{\{b^2 - 2\theta z_j \geq 0\}}H_2\Big(m_i,z_j,\frac{U^n_{i,j+1} - U^n_{i,j}}{\Delta z}\Big) + \indic{\{b^2 - 2\theta z_j < 0\}}H_2\Big(m_i,z_j,\frac{U^n_{i,j} - U^n_{i,j-1}}{\Delta z}\Big)\Big]
\end{align}
The derivative of $U^{n+1}_{i,j}$ w.r.t. $U^n_{i,j}$ is given by the following:
\begin{align}
\frac{\partial U^{n+1}_{i,j}}{\partial U^{n}_{i,j}} = &1 - \Delta t \Big[\underset{:=h_1(m_i,z_j,\alpha,\beta)}{\underbrace{\indic{\{\beta \leq \alpha_0\}} \frac{\pa H_1}{\pa (\pa \hat{U}/\pa m)}(m_i,z_j,\beta)\Big(-\frac{1}{\Delta m}\Big) + (1-\indic{\{\alpha\leq\alpha_0\}}) \frac{\pa H_1}{\pa (\pa \hat{U}/\pa m)}(m_i,z_j,\alpha)\frac{1}{\Delta m}}} \\
+ &\underset{:=h_2(m_i,z_j,\gamma,\delta)}{\underbrace{\indic{\{b^2 - 2\theta z_j \geq 0\}} \frac{\pa H_2}{\pa (\pa \hat{U}/\pa z)}(m_i,z_j,\gamma)\Big(-\frac{1}{\Delta z}\Big) + \indic{\{b^2 - 2\theta z_j < 0\}} \frac{\pa H_2}{\pa (\pa \hat{U}/\pa z)}(m_i,z_j,\delta)\frac{1}{\Delta z}}}\Big]
\end{align}
Since $U^{n+1}_{i,j}$ is an increasing function, so $\frac{\partial U^{n+1}_{i,j}}{\partial U^{n}_{i,j}} \geq 0$. Recalling that $\frac{\pa H_1}{\pa (\pa \hat{U}/\pa m)}(m_i,z_j,\alpha)(\alpha-\alpha_0) \geq 0$ and expanding on $h_1(m_i,z_j,\alpha, \beta)$ we get 
\begin{align}
h_1(m_i,z_j,\alpha, \beta) &= 
\begin{cases}
-\frac{1}{\Delta m} \frac{\pa H_1}{\pa (\pa \hat{U}/\pa m)}(m_i,z_j,\beta),~~~~\text{if}~\beta,\alpha \leq \alpha_0, \\
-\frac{1}{\Delta m} \Big(\frac{\pa H_1}{\pa (\pa \hat{U}/\pa m)}(m_i,z_j,\beta)-\frac{\pa H_1}{\pa (\pa \hat{U}/\pa m)}(m_i,z_j,\alpha)\Big),~~~~\text{if}~\beta \leq \alpha_0,~\alpha \geq \alpha_0,\\
0,~~~~\text{if}~\beta \geq \alpha_0,~\alpha \leq \alpha_0,\\
\frac{1}{\Delta m} \frac{\pa H_1}{\pa (\pa \hat{U}/\pa m)}(m_i,z_j,\alpha),~~~~\text{if}~\beta,\alpha \geq \alpha_0
\end{cases}\\
&=
\begin{cases}
\frac{1}{\Delta m} \Big|\frac{\pa H_1}{\pa (\pa \hat{U}/\pa m)}(m_i,z_j,\beta)\Big|,~~~~\text{if}~\beta,\alpha \leq \alpha_0, \\
\frac{1}{\Delta m} \Big(\Big|\frac{\pa H_1}{\pa (\pa \hat{U}/\pa m)}(m_i,z_j,\beta)\Big|+\Big|\frac{\pa H_1}{\pa (\pa \hat{U}/\pa m)}(m_i,z_j,\alpha)\Big|\Big),~~~~\text{if}~\beta \leq \alpha_0,~\alpha \geq \alpha_0,\\
0,~~~~\text{if}~\beta \geq \alpha_0,~\alpha \leq \alpha_0,\\
\frac{1}{\Delta m}\Big|\frac{\pa H_1}{\pa (\pa \hat{U}/\pa m)}(m_i,z_j,\alpha)\Big|,~~~~\text{if}~\beta,\alpha \geq \alpha_0
\end{cases}
\end{align}
W.l.o.g., suppose that $\Big|\frac{\pa H_1}{\pa (\pa \hat{U}/\pa m)}(m_i,z_j,\alpha)\Big| \geq \Big|\frac{\pa H_1}{\pa (\pa \hat{U}/\pa m)}(m_i,z_j,\beta)\Big|$. Then 
\begin{gather}\label{cond_h1}
h_1(m_i,z_j,\alpha, \beta) \leq  \frac{2}{\Delta m}\Big|\frac{\pa H_1}{\pa (\pa \hat{U}/\pa m)}(m_i,z_j,\alpha)\Big|
\end{gather}
Expanding on $h_2(m_i,z_j, \gamma, \delta)$ we get
\begin{gather}
h_2(m_i,z_j,\gamma, \delta) = \underset{ \frac{\pa H_2}{\pa (\pa \hat{U}/\pa z)}(m_i,z_j,\gamma) \leq 0}{\underbrace{\indic{\{b^2 - 2\theta z_j \geq 0\}} \frac{\pa H_2}{\pa (\pa \hat{U}/\pa z)}(m_i,z_j,\gamma)\Big(-\frac{1}{\Delta z}\Big)}} + \underset{ \frac{\pa H_2}{\pa (\pa \hat{U}/\pa z)}(m_i,z_j,\delta) \geq 0}{\underbrace{\indic{\{b^2 - 2\theta z_j < 0\}} \frac{\pa H_2}{\pa (\pa \hat{U}/\pa z)}(z_j,\delta)\frac{1}{\Delta z}}} \\
=\indic{\{b^2 - 2\theta z_j \geq 0\}}\frac{1}{\Delta z}\Big|\frac{\pa H_2}{\pa (\pa \hat{U}/\pa z)}(m_i,z_j,\gamma)\Big| + \indic{\{b^2 - 2\theta z_j < 0\}}\frac{1}{\Delta z}\Big|\frac{\pa H_2}{\pa (\pa \hat{U}/\pa z)}(m_i,z_j,\delta)\Big|
\end{gather}
Suppose, w.l.o.g. that $\Big|\frac{\pa H_2}{\pa (\pa \hat{U}/\pa z)}(m_i,z_j,\gamma)\Big| \geq \Big|\frac{\pa H_2}{\pa (\pa \hat{U}/\pa z)}(m_i,z_j,\delta)\Big|$, then we have
\begin{gather}\label{cond_h2}
h_2(m_i,z_j, \gamma, \delta) \leq  \frac{1}{\Delta z}\Big|\frac{\pa H_2}{\pa (\pa \hat{U}/\pa z)}(m_i,z_j,\gamma)\Big|
\end{gather}
Combining (\ref{cond_h1}) and (\ref{cond_h2}), we get the condition which guarantees the monotonicity of the scheme $G$:
\begin{gather}\label{condition}
1 - 2\frac{\Delta t}{\Delta m}\Big|\frac{\pa H_1}{\pa (\pa \hat{U}/\pa m)}(m_i,z_j,\alpha)\Big| - \frac{\Delta t}{\Delta z}\Big|\frac{\pa H_2}{\pa (\pa \hat{U}/\pa z)}(m_i,z_j,\gamma)\Big| \geq 0
\end{gather}

To compute the relations $\Delta t/\Delta m$, $\Delta t/\Delta z$, which satisfy condition (\ref{condition}), we must estimate $\Big|\frac{\pa H_1}{\pa (\pa \hat{U}/\pa m)}(m_i,z_j,\alpha)\Big|$ and $\Big|\frac{\pa H_2}{\pa (\pa \hat{U}/\pa z)}(m_i,z_j,\gamma)\Big|$. The derivatives are given by the following
\begin{gather}
\frac{\pa H_1}{\pa (\pa \hat{U}/\pa m)}(m_i,z_j,\alpha) = -\theta m_i + \frac{\alpha}{2C}, \\
\frac{\pa H_2}{\pa (\pa \hat{U}/\pa z)}(m_i,z_j,\gamma) = 2\theta z_j - b^2
\end{gather}
On a domain defined by $[m_0,m_I] \times [z_0,z_J]$ with $m_0<0$, we have the following bounds
\begin{gather}
\Big|\frac{\pa H_1}{\pa (\pa \hat{U}/\pa m)}(m_i,z_j,\alpha)\Big| \leq \theta |m_0| + \frac{|\alpha|}{2C}, \\
\Big|\frac{\pa H_2}{\pa (\pa \hat{U}/\pa z)}(m_i,z_j,\gamma)\Big| \leq 2\theta z_J-b^2
\end{gather}
Obtaining $\Big|\frac{\pa H_2}{\pa (\pa \hat{U}/\pa z)}(m_i,z_j,\gamma)\Big|$ is straightforward, while $\Big|\frac{\pa H_1}{\pa (\pa \hat{U}/\pa m)}(m_i,z_j,\alpha)\Big|$ requires an a priori upper bound on $|\alpha|=|\partial \widehat{U}/\partial m|$. For example, the Lipschitz constant can be used if known. Otherwise, condition (\ref{condition}) could be verified to hold at every point of the grid $(m_i,z_j)$ during numerical simulation.
It is easy to see that the requirement on the derivatives of $U^{n+1}_{i,j}$ (or $G$) w.r.t. the rest of the variables $U^{n}_{i-1,j}$, $U^{n}_{i+1,j}$, $U^{n}_{i,j-1}$, $U^{n}_{i,j+1}$ to be positive is also satisfied by (\ref{condition}).

%\section{Old discrete time example, will be commented out}
%\todo[inline]{CB: As we discussed earlier, we probably want to remove the example from that section and replace it by a shorter, and more to-the-point example.}

%\input{example}

\printbibliography

\end{document}